\documentclass[11pt]{article}
\usepackage{amssymb,amsmath,latexsym}
\usepackage{amsthm}
\usepackage[shortlabels]{enumitem}
\usepackage{caption}
\usepackage{xcolor}
\usepackage{float}
\usepackage{graphicx}
\usepackage{arydshln}
\usepackage[utf8]{inputenc}
\usepackage[english]{babel}
\usepackage[T2A]{fontenc}
\usepackage{tikz}

\usepackage{tabularx} 
\usepackage{colortbl} 
\usepackage{url}
\usepackage{breakurl}
\usepackage[colorlinks=true,citecolor=blue,linkcolor=blue,urlcolor=blue,bookmarks,bookmarksopen,bookmarksdepth=2,backref=page,breaklinks]{hyperref}

\usepackage{bookmark}
\bookmarksetup{
	numbered, 
	open,
}

\renewcommand*{\backref}[1]{}
\renewcommand*{\backrefalt}[4]{%
	\ifcase #1 (Not cited.)%
	\or        (Cited on page~#2.)%
	\else      (Cited on pages~#2.)%
	\fi}

\theoremstyle{plain}
\newtheorem{theorem}{Theorem}[section]
\newtheorem{lemma}[theorem]{Lemma}
\newtheorem{proposition}[theorem]{Proposition}

\newtheorem{corollary}[theorem]{Corollary}

\theoremstyle{definition}

\newtheorem{example}[theorem]{Example}
\newtheorem{openproblem}{Open Problem}
\newtheorem{definition}[theorem]{Definition}
\newtheorem{remark}[theorem]{Remark}

\newtheorem{notation}[theorem]{Notation}

\numberwithin{theorem}{subsection}
\numberwithin{equation}{subsection}
\numberwithin{table}{subsection}
\numberwithin{figure}{subsection}
\numberwithin{openproblem}{section}

\begin{document}
\title{Lattice walks, pattern statistics, and Riordan arrays}

\author{Milan Tenn\vspace{0.4cm} \ \\
Swarthmore College \\\tt mtenn1@swarthmore.edu\vspace{0.4cm}
}

\date{\today}
\maketitle
\abstract{We introduce a new class of Riordan matrices corresponding to lattice walks with colored steps. We show that this class corresponds to a class of pseudo-involutions and then find both generating functions and explicit combinatorial formulas for entries of these Riordan matrices. We use these Riordan matrices to find an explicit combinatorial formula for the occurences of an arbitrary pattern in our lattice walks. Finally, we define a distributive lattice on our lattice walks and connect our new RNA matrices to Whitney numbers of lattices.}\ \\[2mm]
\noindent\textbf{Keywords:} Riordan matrix, pattern popularity, peakless Motzkin path, generating functions. \\

\noindent\textbf{Mathematics Subject Classification:} 05A05, 05A15.

\thispagestyle{empty}
\section{Introduction}
Peakless Motzkin paths and their prefixes are a class of lattice walks which correspond to a variety of applications. The enumeration of prefixes of peakless and valleyless Motzkin paths of length $n$ and endpoint height $k$ corresponds to entries of Riordan matrices respectively called RNA Array I and RNA Array II. Riordan matrices are a well-studied class of infinite lower triangular matrices which have been extensively investigated with both algebraic and combinatorial methods in~\cite{alternativeriordan, algebraicriordanstructure, riordansums, hepaper,deltasequence,riordansubgroupsbarry} and which are often useful in combinatorial counting problems related to lattice walks. RNA Arrays I and II in particular are called RNA Arrays because, in addition to counting lattice walks, their entries also count classes of RNA secondary structures with $n$ nucleotides and $k$ mutations. Nkwanta introduced RNA Arrays I and II and connected RNA secondary structures to lattice walks in~\cite{asamoahearlypaper}. As a result, combinatorial problems related to peakless and valleyless Motzkin paths often correspond to problems in RNA modeling. Furthermore, some generalizations of these RNA Arrays have been investigated in~\cite{evanspaper} by allowing $j$ colors for horizontal steps in peakless Motzkin paths and their prefixes. 

In this paper, we further generalize RNA Arrays I and II by introducing new Riordan arrays counting peakless and valleyless Motzkin paths while allowing $j_1$ colors for upwards steps, $j_2$ for horizontal steps, and $j_3$ for downwards steps. We derive both generating functions and explicit combinatorial formulas for these new generalized RNA Arrays. In addition, we show that the generalizations of RNA Array I are pseudo-involutions in the Riordan group whenever $j_1=1$, and we generalize certain relationships between RNA matrices found in~\cite{evanspaper}.

Our generalization of the RNA Arrays is in part motivated by potential applications to RNA secondary structures. From~\cite{asamoahearlypaper} it is well-known that the RNA Arrays correspond to RNA secondary structures. Adding colors might allow for the inclusion of additional information, such as information related to the RNA primary structure. For instance, we might 4-color downwards steps in peakless Motzkin paths, which correspond to nucleotides at the end of a bond, to encode which nucleotide actually appears in the primary structure at that location.

Furthermore, we consider a generalization of grand peakless Motzkin paths by allowing colors in each step. Grand peakless Motzkin paths relax the requirement that peakless Motzkin paths must stay at $y\ge 0$. They have previously been investigated in~\cite{grandgraycodes, grandtrees}. We find that the enumeration of grand peakless Motzkin paths and their prefixes of length $n$ and endpoint height $k$ yields a new RNA Array which we call RNA Array III. As with RNA Arrays I and II, allowing for colors in the steps of grand peakless Motzkin paths yields a new class of generalized versions of RNA Array III. We derive generating functions, explicit combinatorial formulas, and connections to the generalized forms of RNA Arrays I and II for the generalized RNA Array III.

When considering RNA Array III, we noticed that the columns for $k=1$ and $k=2$ appeared to correspond with counts of horizontal steps and ascents in peakless Motzkin paths given by A110236 and A114713 in~\cite{oeis}. We both prove this connection and demonstrate a broader correspondence between grand peakless Motzkin paths with colored steps and the problem of enumerating pattern statistics in peakless Motzkin paths. We are able to solve the problem of enumerating occurences of an arbitrary pattern in peakless Motzkin paths with colors. We further solve the problem of enumerating the total number of occurences of an arbitrary pattern at an arbitrary fixed height. 

For both problems, we give general formulas in terms of entries in our generalized RNA Arrays I and III, which is sufficient to easily obtain both generating functions and explicit combinatorial formulas. The problem of obtaining pattern statistics is of interest in the literature and is investigated for other classes of lattice walks in~\cite{dyckairpocketspattern,countingstrings}. Pattern statistics for peakless Motzkin paths also have applications to problems in RNA secondary structure enumeration investigated in~\cite{combinatoricssecondarystructures}. However, the problem of obtaining explicit combinatorial formulas for pattern statistics in lattice walks is not solved in~\cite{dyckairpocketspattern,countingstrings,combinatoricssecondarystructures}. Likewise, many sequences in~\cite{oeis} corresponding to pattern statistics for peakless Motzkin paths do not have explicit formulas.

Finally, we note the first column of our new RNA Array III given $j_1=j_2=j_3=1$ corresponds exactly to the Whitney number of level n of the lattice of the ideals of the fence of order 2n, given by A051286 in~\cite{oeis} and in~\cite{whitneylattices}. We define a distributive lattice on the class of lattice walks enumerated by this array and use this to give a Whitney number interpretation for RNA Array III, which may provide useful insight for future work on lattice walks or distributive lattices.

This paper is organized in the following way. In Section~\ref{sec:background}, we introduce basic definitions and results about Riordan arrays and lattice walks found in the literature which we will use later in the paper, in addition to some new definitions and preliminary results. In Section~\ref{sec:counting}, we give Riordan matrix interpretations along with explicit formulas enumerating various lattice walks and connect the enumeration of these lattice walks to the problem of calculating pattern statistics. In Section~\ref{sec:applications}, we provide examples of explicit formulas for pattern statistics which can be obtained using our results in the prior section. We also give a Whitney number interpretation for RNA Array III and investigate certain asymptotics. Finally, in Section~\ref{sec:conclusion}, we state some open problems and ideas for further work.

\section{Background and Preliminaries}\label{sec:background}
\subsection{Riordan Arrays}
\begin{definition}
    Let $f(z)=\sum_{i\ge 0} f_i z^i$ be a formal power series. Then, $[z^n] f(z)=f_n$.
\end{definition}

\begin{definition}
    Let $M=(m_{n,k})_{n,k\ge 0}$ be an infinite matrix. We call $M$ a Riordan array if and only if $m_{n,k}=[z^n] [g(z)f^k(z)]$ for some formal power series $f(z)$ and $g(z)$. We say that $M=(g(z),f(z))$.
\end{definition}

\begin{definition}
    Let $M=(g,f)$ be a Riordan array where $g$ is a formal power series of order 0 and $f$ is a formal power series of order 1. Then, we call $M$ a proper Riordan array.
\end{definition}

\begin{definition}
    Let $f$ be a formal power series. Then, $\overline{f}$ is the compositional inverse of $f$ where $f\circ\overline{f}=\overline{f}\circ f=z$.
\end{definition}

\begin{theorem}
    Proper Riordan arrays form a group called the Riordan group under standard matrix multiplication where:
    \begin{enumerate}
        \item[(i)] $(g,f)\cdot (h,v)=(g(z)h(f(z)), v(f(z)))$
        \item[(ii)] $(1,z)$ is the identity
        \item[(iii)] $(g,f)^{-1}=(\frac{1}{g(\overline{f}(z))},\overline{f}(z))$.
    \end{enumerate}
\end{theorem}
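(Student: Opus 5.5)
The plan is to prove everything through the fundamental theorem of Riordan arrays. If $M=(g,f)$ and $A(z)=\sum_k a_k z^k$ is the generating function of a column vector, then
\[
M\,(a_k)_k \text{ has generating function } g(z)\,A(f(z)).
\]
To see this, note that the $n$-th entry of the product is $\sum_k [z^n]\, g f^k\, a_k = [z^n]\, g(z)\sum_k a_k f(z)^k$. This infinite sum is well defined as a formal power series because $f$ has order $1$, so $f^k$ has order $k$ and only finitely many terms contribute to each coefficient. The proper-array hypothesis also makes the product matrix lower triangular, so all the matrix sums involved are finite.

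\textbf{Part (i).} Apply the lemma to the $k$-th column of $(h,v)$, whose generating function is $h(z)v(z)^k$. The $k$-th column of $(g,f)\cdot(h,v)$ is then
\[
g(z)\,h(f(z))\,v(f(z))^k,
\]
which is exactly the $k$-th column of $(g\cdot (h\circ f),\, v\circ f)$. Closure follows. The product array is proper because $g\,h(f)$ has order $0$ and $v(f)$ has order $1$. Associativity is inherited from matrix multiplication, which is associative here since all matrices are lower triangular.

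\textbf{Part (ii).} The array $(1,z)$ has entries $[z^n]z^k=\delta_{nk}$, so it is the identity matrix.

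\textbf{Part (iii).} A proper $f$ has $f_1\neq 0$, so its compositional inverse $\overline f$ exists and is itself of order $1$. Since $g(0)\neq 0$, the series $g(\overline f)$ has order $0$ and is invertible. By (i),
\[
(g,f)\cdot\Bigl(\tfrac{1}{g(\overline f)},\,\overline f\Bigr)=\Bigl(g\cdot \tfrac{1}{g(\overline f(f))},\,\overline f(f)\Bigr)=(1,z),
\]
and symmetrically
\[
\Bigl(\tfrac{1}{g(\overline f)},\overline f\Bigr)\cdot(g,f)=\Bigl(\tfrac{g(\overline f)}{g(\overline f)},\,f(\overline f)\Bigr)=(1,z).
\]

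The only delicate point is the formal justification of the composition $A(f(z))$ and the existence of $\overline f$. Both follow from standard order arguments: $f$ has order exactly $1$, and $\overline f$ is built coefficient by coefficient using the invertibility of $f_1$.
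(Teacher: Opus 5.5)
Your argument is correct and essentially the standard one: the paper gives no proof of its own and simply cites the original Riordan group paper, and your route through the fundamental theorem $(g,f)\,A(z)=g(z)A(f(z))$ — with order arguments for finiteness of sums, closure, associativity, and two-sided inverses — is the standard proof from that source. The one tacit assumption, which the paper shares (it freely uses $1/j_1$), is that coefficients lie in a field, so that $f_1$ and $g_0$ are invertible and hence $\overline{f}$ and $1/g(\overline{f})$ exist.
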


\begin{proof}
    This was shown in~\cite{originalriordanpaper} and is a foundational result for Riordan array theory.
\end{proof}

\begin{theorem}\label{thm:amatrixcharacterization}
    Let $M=(m_{n,k})_{n,k\ge 0}$ be an infinite lower triangular matrix. Then, $M$ is a proper Riordan array if and only if there exists some array $(\alpha_{i,j})_{i,j\ge 0}$ where $\alpha_{0,0}\ne 0$ and 
    \[m_{n+1,k+1}=\sum_{i\ge 0}\sum_{j\ge 0} \alpha_{i,j} m_{n-i, k+j}\]

    for all $n,k\ge 0$. We call $(\alpha_{i,j})$ an $A$-matrix for $M$.
\end{theorem}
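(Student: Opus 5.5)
The plan is to prove both directions through the $A$-sequence characterization of proper Riordan arrays, translated into the two-dimensional $A$-matrix of the statement.

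\emph{Forward direction.} Let $M=(g,f)$ be proper, so $f(z)=f_1z+f_2z^2+\cdots$ with $f_1\ne 0$. First we construct the $A$-sequence. Since $\overline{f}$ exists, set $A(t)=t/\overline{f}(t)$. This is a power series with $A(0)=1/\overline{f}'(0)=f_1\neq 0$, and substituting $t=f(z)$ gives $f(z)=zA(f(z))$. Writing $A(t)=\sum_j a_jt^j$, we then extract coefficients:
\[
m_{n+1,k+1}=[z^{n+1}]\,g f^{k}f=[z^{n+1}]\,g f^{k}\,zA(f)=\sum_{j\ge0}a_j[z^n]\,g f^{k+j}=\sum_j a_j m_{n,k+j}.
\]
Taking $\alpha_{0,j}=a_j$ and $\alpha_{i,j}=0$ for $i\ge1$ gives an $A$-matrix with $\alpha_{0,0}=f_1\ne0$. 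All the sums are finite because $m_{n,k}=0$ for $k>n$.

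\emph{Converse.} Suppose $M$ is lower triangular and satisfies the recurrence with $\alpha_{0,0}\neq0$. We must also assume $m_{0,0}\ne0$, which is implicit in "proper", or else take the statement to include it. Let $g(z)=\sum_n m_{n,0}z^n$, so the first column defines $g$, and $g$ has order $0$. Let $c_k(z)=\sum_n m_{n,k}z^n$ be the $k$-th column generating function. The recurrence, summed against $z^{n+1}$, becomes the column identity
\[
c_{k+1}(z)=z\sum_{i,j}\alpha_{i,j}z^{i}c_{k+j}(z).
\]
Only finitely many terms contribute to each coefficient, since $c_{k+j}$ has order at least $k+j$. The goal is to find $f$ of order $1$ with $c_k=gf^k$ for all $k$.

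The natural construction is to define $f$ as the unique power series solution of the functional equation
\[
f=z\sum_{i,j}\alpha_{i,j}z^if^j .
\]
Existence and uniqueness of $f$ with $f(0)=0$ follow from a coefficient-by-coefficient (Lagrange-type) fixed-point argument: $[z^{n+1}]f$ is determined by lower coefficients. We also get $[z]f=\alpha_{0,0}\ne0$, so $f$ has order $1$. The series $c_k':=gf^k$ then satisfy the same column identity, which we check by multiplying the functional equation by $gf^k$. They also agree with $c_k$ at $k=0$.

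The main obstacle is uniqueness. The recurrence expresses $c_{k+1}$ through the columns $c_{k+j}$ with $j\ge0$, including higher columns, so we cannot simply induct on $k$. Instead we induct on the row index $n$. Every entry $m_{n+1,k+1}$ is determined by entries in rows $\le n$, and lower triangularity kills $m_{n-i,k+j}$ whenever $k+j>n-i$. The column $k=0$ is fixed by $g$, so every row of $M$ is determined by the previous rows. The array $(gf^k)$ satisfies the same recurrence and the same initial data, so it coincides with $M$ row by row. This shows $M=(g,f)$ is a proper Riordan array.
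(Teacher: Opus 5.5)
Your proof is correct. It differs from the paper mainly in that the paper gives no argument: it simply cites Theorem~2.5 of \cite{alternativeriordan}.

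You instead give a self-contained proof. The forward direction uses the ordinary $A$-sequence $A(t)=t/\overline{f}(t)$, which is an $A$-matrix supported on row $i=0$. The converse solves $f=z\sum_{i,j}\alpha_{i,j}z^i f^j$ coefficientwise and then identifies $M$ with $([z^n]gf^k)$ by induction on the row index. That is the right induction, since the recurrence involves higher columns and so you cannot induct on $k$. Writing the argument out buys two things the citation hides.

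First, your remark about $m_{0,0}$ is a genuine correction rather than a technicality. As printed, the converse is false. The zero matrix, or $(z,z)$ with $m_{n,k}=\delta_{n,k+1}$, is lower triangular and satisfies the recurrence with $\alpha_{0,0}=1$, yet neither is proper. The hypothesis $m_{0,0}\neq 0$ is needed. Equivalently one can require $m_{n,n}\neq 0$ for all $n$, since lower triangularity forces $m_{n,n}=\alpha_{0,0}^n m_{0,0}$.

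Second, your construction shows explicitly that the multiplier is the unique solution $\tilde f$ of $\tilde f=z\sum_{i,j}\alpha_{i,j}z^i\tilde f^{\,j}$. For any proper $M=(g,f)$ with this $A$-matrix, comparing column $1$ gives $gf=g\tilde f$, and since $g$ is invertible, $f=\tilde f$. This proves Theorem~\ref{thm:uniquemultiplier} directly. It avoids the paper's detour through sums of Riordan arrays, where invoking this theorem to call $M+L$ proper would itself require $m_{0,0}+l_{0,0}\neq 0$.

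The citation, for its part, buys only brevity.
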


\begin{proof}
    This is Theorem 2.5 in~\cite{alternativeriordan} and provides an alternate characterization of Riordan arrays.
\end{proof}

\begin{theorem}\label{thm:uniquemultiplier}
    Let $M=(g(z),f(z))$ be a Riordan array with $A$-matrix $(\alpha_{i,j})_{i,j\ge 0}$. Then, $f(z)$ is uniquely determined by $(\alpha_{i,j})_{i,j\ge0 }$.
\end{theorem}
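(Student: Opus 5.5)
The plan is to turn the $A$-matrix recurrence into a single functional equation for $f$, and then show that this equation has exactly one solution of order $1$ by comparing coefficients.

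\textbf{Step 1: the functional equation.} Let $M=(g,f)$ be proper, and write $d_k(z)=g(z)f^k(z)=\sum_n m_{n,k}z^n$ for the generating function of column $k$.

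Multiply the identity of Theorem~\ref{thm:amatrixcharacterization} by $z^{n+1}$ and sum over $n\ge 0$. Since $f$ has order $1$, column $k+1$ has $m_{0,k+1}=0$, so the left side is $d_{k+1}(z)$. On the right, $\sum_n m_{n-i,k+j}z^{n+1}=z^{i+1}d_{k+j}(z)$. This gives
\[
g f^{k+1} \;=\; z\sum_{i,j\ge 0}\alpha_{i,j}\, z^{i} g f^{k+j}.
\]
Divide by $gf^k$, which is legitimate since $g$ is invertible and $f^k$ is a nonzero power series. Setting $A(z,w)=\sum_{i,j}\alpha_{i,j}z^iw^j$, we get
\[
f(z)=z\,A\bigl(z,f(z)\bigr).
\]
This composition is well defined because $f$ has order $1$: each coefficient of $z^n$ collects only finitely many terms.

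\textbf{Step 2: uniqueness.} Suppose $f$ and $h$ are both order-$1$ series with $f=zA(z,f)$ and $h=zA(z,h)$. Induct on $n$ to show $[z^n]f=[z^n]h$.

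For $n=1$, both coefficients equal $\alpha_{0,0}$. For $n\ge 2$, look at $[z^n]\,zA(z,f)=[z^{n-1}]A(z,f)$. Each term $\alpha_{i,j}z^if^j$ contributes to the coefficient of $z^{n-1}$ only through coefficients $f_m$ with $m\le n-1$. This holds because $f$ has order $1$: every factor of $f^j$ contributes a degree of at least $1$, so no single factor can reach degree above $n-1$.

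Hence $f_n$ is a polynomial in $\alpha$'s and in $f_1,\dots,f_{n-1}$. By the induction hypothesis, this polynomial gives the same value for $h$, so $f_n=h_n$. Therefore $f$ is determined by $(\alpha_{i,j})$.

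\textbf{Main obstacle.} Step 1 needs care at two points.
\begin{itemize}
\item The sum defining the right-hand side must be well defined. For fixed $n$ the sum over $i$ is finite, and the sum over $j$ is finite because $m_{n-i,k+j}=0$ once $k+j>n-i$ (lower triangularity).
\item The division by $gf^k$ requires a cancellation argument in the ring of formal power series, which has no zero divisors.
\end{itemize}
Once the identity $f=zA(z,f)$ is in hand, the uniqueness argument in Step 2 is routine.
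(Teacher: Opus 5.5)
Your argument is correct, but it follows a different route from the paper.

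The paper argues by linearity. If $L=(g_1,f_1)$ and $M=(g_2,f_2)$ share the $A$-matrix, then $M+L$ satisfies the same recurrence, so it is a proper Riordan array by Theorem~\ref{thm:amatrixcharacterization}. A cited result on sums of Riordan arrays (a sum of two Riordan arrays is Riordan only if the multipliers coincide) then forces $f_1=f_2$. That proof is very short, but it rests entirely on two external theorems. It also silently needs $g_1(0)+g_2(0)\neq 0$: otherwise the diagonal of $M+L$ vanishes, so $M+L$ cannot be proper, and one would have to use $M-L$ instead.

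You instead derive the identity $f=zA(z,f)$ directly from the recurrence. You then show by induction on coefficients that this equation has exactly one solution of order $1$. This is self-contained and has no degenerate case. It also proves more, since it gives an explicit recursive procedure computing $f$ from $(\alpha_{i,j})$. Finally, it connects results the paper proves separately: for a $B$-matrix one has $A(z,w)=\beta_{0,0}+wB(zw)$, and your Step~1 then gives Proposition~\ref{prop:bmultformula} at once.
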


\begin{proof}
    This is stated in~\cite{alternativeriordan} without proof. Alternatively, we provide the following proof. Suppose that $L=(g_1,f_1)$ and $M=(g_2, f_2)$ are both Riordan arrays with $A$-matrix $(\alpha_{i,j})_{i,j\ge 0}$. Then, for all $n,k\ge 0$,
    \[l_{n+1,k+1}=\sum_{i\ge 0}\sum_{j\ge 0} \alpha_{i,j} l_{n-i, k+j}\]

    and
    \[m_{n+1,k+1}=\sum_{i\ge 0}\sum_{j\ge 0} \alpha_{i,j} m_{n-i, k+j}\]

    by definition. So,
    \[m_{n+1,k+1}+l_{n+1,k+1} =\sum_{i\ge 0}\sum_{j\ge 0} \alpha_{i,j} (m_{n-i,k+j}+l_{n-i, k+j})\]

    which means that by Theorem~\ref{thm:amatrixcharacterization}, $M+L$ is a proper Riordan array, since it also has $A$-matrix $(\alpha_{i,j})_{i,j\ge 0}$. Since $M$ and $L$ are both Riordan arrays, $M+L$ is a Riordan array if and only if $f_1=f_2$ by Theorem 3.1 in~\cite{riordansums}. Since $M+L$ is Riordan, it follows that $f_1=f_2$. Thus, we conclude that any Riordan array with $A$-matrix $(\alpha_{i,j})_{i,j\ge 0}$ has the same multiplier $f_1=f_2=f$. This means that $(\alpha_{i,j})_{i,j\ge 0}$ uniquely determines $f$. 
\end{proof}

\begin{definition}
    Let an $A$-matrix $(\alpha_{i,j})_{i,j\ge 0}$ where only entries of the form $\alpha_{0,0}$ and $\alpha_{i,i+1}$ may be nonzero be called a $B$-matrix. We may also represent this $B$-matrix as $(\alpha_{0,0},B(z))$ where $B(z)$ is the generating function for ${(\alpha_{i,i+1})}_{i\ge 0}$.
\end{definition}

\begin{proposition}\label{prop:bmultformula}
    Let $M=(g,f)$ have a $B$-matrix $(\beta_{i,j})_{i,j\ge 0}=(\beta_{0,0}, B(z))$. Then, we conclude that $f(z)=\beta_{0,0} z + zf(z) B(zf(z))$.
\end{proposition}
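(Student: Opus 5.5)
The plan is to convert the $B$-matrix recurrence into a functional equation for the column generating functions of $M$, and then read off the equation for $f$. By definition of a $B$-matrix, the $A$-matrix recurrence of Theorem~\ref{thm:amatrixcharacterization} reduces to
\[m_{n+1,k+1}=\beta_{0,0}\,m_{n,k}+\sum_{i\ge 0}\beta_{i,i+1}\,m_{n-i,k+i+1},\qquad n,k\ge 0 .\]
Write $b_i=\beta_{i,i+1}$, so that $B(z)=\sum_{i\ge0} b_i z^i$. Let $C_k(z)=\sum_n m_{n,k}z^n=g(z)f(z)^k$ be the $k$-th column generating function.

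First, multiply the recurrence by $z^{n+1}$ and sum over $n\ge 0$. The left side gives $C_{k+1}(z)$, since $f$ has order $1$ and hence $m_{0,k+1}=0$. The term with $\beta_{0,0}$ gives $\beta_{0,0}\,zC_k(z)$. The term with $b_i$ gives
\[b_i\sum_n m_{n-i,k+i+1}z^{n+1}=b_i\,z^{i+1}C_{k+i+1}(z),\]
where we use lower triangularity and $m_{n,k}=0$ for $n<0$. So
\[C_{k+1}=\beta_{0,0}zC_k+\sum_{i\ge0}b_i z^{i+1}C_{k+i+1}.\]

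Next, substitute $C_k=gf^k$ and divide by $gf^k$. This is legitimate because $g$ has order $0$ and is therefore invertible as a power series, and $f^k$ can be cancelled by working in formal Laurent series, or equivalently by taking $k=0$ directly. The result is
\[f=\beta_{0,0}z+\sum_{i\ge0}b_i z^{i+1}f^{i+1}=\beta_{0,0}z+zf\sum_{i\ge 0}b_i(zf)^i=\beta_{0,0}z+zf\,B(zf),\]
which is the claimed identity.

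The main point needing care is the convergence of the infinite sum over $i$ as a formal power series, and that the boundary terms vanish. Convergence holds because $z^{i+1}C_{k+i+1}$ has order at least $k+2i+2$, so only finitely many terms contribute to each coefficient. For the boundary, the case $n-i<0$ contributes zero, and the $n+1=0$ term is absent. Alternatively, one could note that the equation $f=\beta_{0,0}z+zfB(zf)$ has a unique order-one solution, satisfying the same $B$-recurrence, and appeal to Theorem~\ref{thm:uniquemultiplier}. The direct computation above is simpler, so I would present that.
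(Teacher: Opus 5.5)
Your proof is correct and follows essentially the same route as the paper. Both arguments translate the $B$-matrix recurrence into the column identity $gf^{k+1}=\beta_{0,0}\,z\,gf^{k}+z\,gf^{k+1}B(zf)$ and then cancel $gf^{k}$; yours is, if anything, more careful, since you explicitly check that the constant term $m_{0,k+1}=0$ vanishes and that $\sum_i b_i z^{i+1}C_{k+i+1}$ converges formally.
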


\begin{proof}
    If $M=(m_{n,k})_{n,k\ge 0}$ then by definition,
    \[m_{n+1,k+1}=\beta_{0,0} m_{n,k}+\sum_{i\ge 0} \beta_{i,i+1} m_{n-i,k+1+i},\]

    Since $m_{n,k}=[z^n] gf^k$ for all $n,k\ge 0$, it follows that
    \[[z^{n+1}] gf^{k+1} = [z^n] \beta_{0,0} gf^k + \sum_{i\ge 0} \beta_{i,i+1} [z^{n-i}] gf^{k+1+i}.\]

    Next,
    \[[z^{n+1}] (zg f^{k+1} B(zf))=\sum_{i\ge 0} [z^{n+1-i}] (zgf^{k+1}) * [z^{i}] (B(zf))\]

    Where
    \[[z^i] B(zf)=\sum_{j\ge 0} [z^i] (\beta_{j,j+1} (zf)^j)\]

    And so
    \[[z^{n+1}] (zg f^{k+1} B(zf))=\sum_{j\ge 0}  \sum_{i\ge 0} [z^{n+1-i}] (zgf^{k+1}) * [z^i] (\beta_{j,j+1} (zf)^j)\]

    Which simplifies to
    \[[z^{n+1}] (zg f^{k+1} B(zf))=\sum_{j\ge 0} \beta_{j,j+1} [z^{n+1}]  (z^{j+1} g f^{k+1+j})=\sum_{j\ge 0} \beta_{j,j+1} [z^{n-j}] (gf^{k+1+j})\]

    And so we see that
    \[[z^{n+1}] gf^k = [z^{n+1}] (z \beta_{0,0} gf^{k-1}) + [z^{n+1}] (zgf^k B(zf)).\]

    From this,
    \[gf^k = z*\beta_{0,0} gf^{k-1} + zgf^k B(zf) \]

    And so
    \[f=\beta_{0,0} z + zf(z) B(zf(z))\]

    as desired.
\end{proof}

\begin{proposition}\label{prop:genbell}
    Let $M=(m_{n,k})_{n,k\ge 0}$ have a $B$-matrix $(\beta_{i,j})_{i,j\ge 0}$. Suppose that for all $n\ge 0$, $m_{n+1,0}=\sum_{i\ge 0} \beta_{i,i+1} m_{n-i,k+i}$. Then, $M=(\frac{m_{0,0} f}{\beta_{0,0} z}, f)$.
\end{proposition}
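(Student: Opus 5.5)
The plan is to read the hypothesis as the ``column-zero'' analogue of the $B$-matrix recurrence, namely $m_{n+1,0}=\sum_{i\ge 0}\beta_{i,i+1}m_{n-i,i}$ (the $k$ in the statement should be $0$), and to translate it into a functional equation for $g$. That equation will be compared with the one for $f$ from Proposition~\ref{prop:bmultformula}.

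First I would check that $M$ is a proper Riordan array. A $B$-matrix is a particular $A$-matrix with $\alpha_{0,0}=\beta_{0,0}$, which must be nonzero for the conclusion to make sense. Theorem~\ref{thm:amatrixcharacterization} then gives $M=(g,f)$ with $g$ of order $0$ and $f$ of order $1$. Proposition~\ref{prop:bmultformula} gives
\[f=\beta_{0,0}z+zf\,B(zf),\]
which rearranges to $f\bigl(1-zB(zf)\bigr)=\beta_{0,0}z$. Since $f$ has order $1$, I can divide to get $1-zB(zf)=\beta_{0,0}z/f$, an identity of formal power series (here $z/f$ is a power series of order $0$).

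Next I would turn the column-zero recurrence into an equation for $g=\sum_n m_{n,0}z^n$. Using $m_{n-i,i}=[z^{n-i}]gf^i$, the same coefficient manipulation as in the proof of Proposition~\ref{prop:bmultformula} gives
\[\sum_{i\ge0}\beta_{i,i+1}[z^{n-i}]gf^i=[z^{n+1}]\Bigl(\sum_{i\ge 0}\beta_{i,i+1}z^{i+1}f^i g\Bigr)=[z^{n+1}]\bigl(zg\,B(zf)\bigr).\]
The hypothesis therefore says that $g$ and $zgB(zf)$ agree in every coefficient of $z^{n+1}$ with $n\ge 0$. The constant terms are $m_{0,0}$ for $g$ and $0$ for $zgB(zf)$, so
\[g=m_{0,0}+zg\,B(zf),\qquad\text{that is,}\qquad g\bigl(1-zB(zf)\bigr)=m_{0,0}.\]
Substituting $1-zB(zf)=\beta_{0,0}z/f$ then yields $g=m_{0,0}f/(\beta_{0,0}z)$, as claimed.

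The main obstacle is bookkeeping rather than substance. I need to interpret the hypothesis with the correct index and justify the coefficient-extraction identity in the middle step, including convergence of the sum over $i$: only finitely many terms contribute to each coefficient because $(zf)^i$ has order $2i$. I also need the implicit assumption $\beta_{0,0}\neq0$, which guarantees both that $M$ is proper and that division by $f$ is legitimate.
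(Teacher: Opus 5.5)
Your argument is correct, but it takes a different route from the paper. The paper never writes a functional equation for $g$. It builds the shifted array $L=(m_{n-1,k-1})_{n,k\ge 0}$, with the artificial corner entry $m_{-1,-1}=m_{0,0}/\beta_{0,0}$. Your column-zero recurrence is exactly what makes $L$ satisfy the same $B$-matrix recurrence as $M$. The paper then invokes Theorem~\ref{thm:uniquemultiplier} to get $L=(m_{0,0}/\beta_{0,0},f)$, and reads column $1$ of $L$ in two ways, $\frac{m_{0,0}}{\beta_{0,0}}f=zg$.

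You instead derive $g\bigl(1-zB(zf)\bigr)=m_{0,0}$ by the same coefficient extraction used in Proposition~\ref{prop:bmultformula}, and divide it against $f\bigl(1-zB(zf)\bigr)=\beta_{0,0}z$.

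\emph{What each route buys.} The paper's shift trick needs no new computation. However, it rests on uniqueness of the multiplier, and the paper's proof of that uniqueness depends on an external result about sums of Riordan arrays. Your version is self-contained given Proposition~\ref{prop:bmultformula} and makes clear exactly where $\beta_{0,0}\neq 0$ is used. It also yields the closed form $g=m_{0,0}/\bigl(1-zB(zf)\bigr)$ directly in terms of the $B$-matrix data.

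Your reading of the stray $k$ in the hypothesis as $0$ is the right one. It matches how the result is applied to Theorem~\ref{thm:srecurrence}.
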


\begin{proof}
    Let $L=(m_{n-1,k-1})_{n,k\ge 0}$ where we denote $m_{-1,-1}=\frac{m_{0,0}}{\beta_{0,0}}$ and $m_{n,-1}=0$ for all $n\ge 0$. It is easy to see that $L$ obeys the required recurrence so that it has the same $B$-matrix as $M$. Then, we know that if $M=(g,f)$, $L=(\frac{m_{0,0}}{\beta_{0,0}}, f)$. Furthermore, we know that the generating function for $l_{n,1}$ is both $\frac{m_{0,0}}{\beta_{0,0}} f(z)$ and $zg(z)$. As such, we conclude that $g(z)=\frac{m_{0,0} f(z)}{\beta_{0,0} z}$.
\end{proof}

\begin{remark}
    If $\beta_{0,0}=1$, a $B$-matrix corresponds exactly to a type-I $B$-sequence as defined in~\cite{hepaper}. Propositions~\ref{prop:bmultformula} and~\ref{prop:genbell} are likewise generalizations of results in~\cite{hepaper}.
\end{remark}

\begin{definition}
    Let $M=(g,f)$ be a proper Riordan array. Then, $M$ is a pseudo-involution if $(g,-f)$ is an element of order 2 in the Riordan group.
\end{definition}

\subsection{Lattice Walks}
\begin{definition}
    Let $N=(1,1)$, $S=(1,-1)$, and $E=(1,0)$. These steps generate paths on the lattice $\mathbb{Z}^2$, which we call $NES$ paths.
\end{definition}

\begin{definition}
    We define a $(j_1,j_2,j_3)$-colored $NES$ path as an $NES$ path where we allow $j_1$ colors for $N$ steps, $j_2$ colors for $E$ steps, and $j_3$ colors for $S$ steps.
\end{definition}

\begin{notation}
In this paper, we assume some fixed choice of $j_1$, $j_2$, and $j_3$ and only consider NES paths which are $(j_1,j_2,j_3)$-colored unless otherwise stated.
\end{notation}

\begin{definition}
    A sequence of steps in a lattice walk is called a pattern. The total number of occurrences of some arbitrary fixed pattern in a class of lattice walks is called a pattern statistic.
\end{definition}

\begin{definition}
    For any $n\ge 0$, $k\ge 0$, let an $NES$ path from $(0,0)$ to $(n,k)$ which never has the pattern $NS$ and never goes below $y=0$ be called an $NES$ path of type 1.
\end{definition}

\begin{definition}
    For any $n\ge 0$, $k\ge 0$, let an $NES$ path from $(0,0)$ to $(n,k)$ which never has the pattern $SN$ and never goes below $y=0$ be called an $NES$ path of type 2.
\end{definition}

\begin{definition}
    For any $n\ge 0$, $k\ge 0$, let an $NES$ path from $(0,0)$ to $(n,k)$ which never has the pattern $NS$ be called an $NES$ path of type 3.
\end{definition}

\begin{definition}
    Let $w$ be an $NES$ path. Let $w^*$ be the path constructed by reversing $w$ and flipping $N$ steps and $S$ steps. We call $w^*$ the dual path of $w$.
\end{definition}

\begin{proposition}\label{prop:gpathends}
    Let $w$ be an arbitrary NES type 3 path from $(0,0)$ to $(m,j)$ not beginning with $S$. Then, NES type 3 paths from $(0,0)$ to $(n,k)$ ending in $w$ are in bijection with NES type 3 paths from $(0,0)$ to $(n-m,k-j)$.
\end{proposition}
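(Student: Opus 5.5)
The natural bijection is concatenation. Given a type 3 path $u$ from $(0,0)$ to $(n-m,k-j)$, I append $w$, translated so that it starts at $(n-m,k-j)$. This gives a path $u w$ from $(0,0)$ to $(n,k)$ that ends in $w$. Conversely, a type 3 path $v$ from $(0,0)$ to $(n,k)$ ending in $w$ has the form $v=uw$ for a unique prefix $u$ consisting of its first $n-m$ steps. Colors are carried along step by step, so nothing changes in the colored setting. The two maps are clearly mutually inverse as maps on colored $NES$ step sequences. What remains is to check that both maps preserve the type 3 condition, i.e.\ the absence of the pattern $NS$. Type 3 paths impose no height restriction, so the only constraint is forbidding $NS$.

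First, the map $v\mapsto u$ is well defined. Any occurrence of $NS$ in the prefix $u$ would also be an occurrence in $v$. Since $v$ avoids $NS$, so does $u$, and $u$ ends at $(n-m,k-j)$ because $w$ contributes displacement $(m,j)$. Hence $u$ is a type 3 path to $(n-m,k-j)$.

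Second, the map $u\mapsto uw$ is well defined. An occurrence of $NS$ in $uw$ either lies entirely inside $u$, lies entirely inside $w$, or straddles the junction. The first two cases are excluded because $u$ is type 3 and $w$ is type 3. The straddling case would require the last step of $u$ to be $N$ and the first step of $w$ to be $S$. This is exactly what the hypothesis that $w$ does not begin with $S$ rules out. Hence $uw$ avoids $NS$, and it is a type 3 path to $(n,k)$ ending in $w$.

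The only point needing care is this junction case, and it is also the sole place the hypothesis on $w$ is used. I would also remark on the degenerate edge cases. If $w$ is empty the statement is trivial. If $n<m$, both sets are empty, since no path of length $n$ can end in a word of length $m$ and no path has negative length.
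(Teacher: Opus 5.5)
Your proposal is correct and is essentially the paper's own proof: strip $w$ off the end to get the shorter type 3 path, append $w$ to go back, and use the hypothesis that $w$ does not begin with $S$ to rule out an $NS$ at the junction. Your three-case check (an $NS$ inside $u$, inside $w$, or straddling the junction) and your remarks on colors and degenerate cases just spell out what the paper states in a line.
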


\begin{proof}
    Let $p$ be some NES type 3 path from $(0,0)$ to $(n,k)$ ending in $w$. Then, remove $w$ from the end of $p$. This gives us a path from $(0,0)$ to $(n-m,k-j)$, which remains NES type 3. Conversely, let $q$ be some NES type 3 path from $(0,0)$ to $(n-m,k-j)$. Then, append $w$ to the end of $q$. Since $w$ does not start with $S$, this cannot create a peak, and instead yields an NES type 3 path from $(0,0)$ to $(n,k)$. It is easy to see that these two processes are inverses, so we have constructed a bijection. As $g_{n-m,k-j}$ counts NES type 3 paths from $(0,0)$ to $(n-m,k-j)$, it must likewise count NES type 3 paths from $(0,0)$ to $(n,k)$ ending in $w$. 
\end{proof}

\begin{proposition}\label{prop:spathends}
    Let $w$ be an arbitrary NES type 3 path from $(0,0)$ to $(m,j)$ which always stays at $y\ge j-k$ and does not begin with $S$. Then, NES type 1 paths from $(0,0)$ to $(n,k)$ ending in $w$ are in bijection with NES type 1 paths from $(0,0)$ to $(n-m,k-j)$.
\end{proposition}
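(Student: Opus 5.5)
We follow the proof of Proposition~\ref{prop:gpathends}: remove or append the suffix $w$. The difference is that type 1 paths must stay at $y\ge 0$. The hypothesis that $w$ stays at $y\ge j-k$ is exactly what keeps the translated copy of $w$ above the $x$-axis.

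\textbf{Removing the suffix.} Let $p$ be an NES type 1 path from $(0,0)$ to $(n,k)$ ending in $w$. Since $w$ goes from $(0,0)$ to $(m,j)$, the copy of $w$ inside $p$ starts at $(n-m,k-j)$. Deleting it leaves a prefix $q$ of $p$ from $(0,0)$ to $(n-m,k-j)$. Being a prefix of $p$, $q$ never goes below $y=0$ and never contains $NS$. Its endpoint satisfies $n-m\ge 0$ and $k-j\ge 0$, the latter because $q$ stays at $y\ge0$. So $q$ is an NES type 1 path from $(0,0)$ to $(n-m,k-j)$.

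\textbf{Appending the suffix.} Let $q$ be an NES type 1 path from $(0,0)$ to $(n-m,k-j)$, and let $p=qw$. We check three things.
\begin{enumerate}
\item[(i)] \emph{No pattern $NS$.} The path $q$ avoids $NS$, and $w$ does too since it is of type 3. The only new adjacent pair is the last step of $q$ followed by the first step of $w$. Because $w$ does not begin with $S$, this pair cannot be $NS$.
\item[(ii)] \emph{Staying at $y\ge 0$.} Inside $p$, the point of $w$ at relative height $y$ has absolute height $(k-j)+y$. Since $w$ stays at $y\ge j-k$, every such point has absolute height at least $(k-j)+(j-k)=0$. Together with $q$ staying at $y\ge0$, all of $p$ stays at $y\ge 0$.
\item[(iii)] \emph{Endpoint.} The path $p$ ends at $(n-m+m,\,k-j+j)=(n,k)$ with $k\ge 0$.
\end{enumerate}
Hence $p$ is an NES type 1 path from $(0,0)$ to $(n,k)$ ending in $w$.

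\textbf{Bijection.} Colors are carried along unchanged in both directions. Removing $w$ and appending $w$ are therefore mutually inverse maps, which gives the claimed bijection.

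\textbf{Edge cases.} The only step needing care is the boundary case (ii). Read literally, the hypothesis that $w$ stays at $y\ge j-k$ is exactly the condition that the shifted copy of $w$ stays at absolute height $\ge 0$, so no further obstacle arises. If $k-j<0$, then $w$ would have to stay at $y\ge j-k>0$, which is impossible since $w$ starts at height $0$. The hypothesis thus forces $j\le k$. In that situation both sides of the bijection are empty or trivially matched, and the statement still holds.
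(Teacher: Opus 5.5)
Your proof is correct and follows essentially the same approach as the paper. You remove the suffix $w$ to get a type 1 path to $(n-m,k-j)$, and conversely append $w$: that $w$ does not begin with $S$ prevents a new $NS$, and that $w$ stays at $y\ge j-k$ keeps the shifted copy at $y\ge 0$. Your step-by-step checks, including the observation that the hypothesis forces $j\le k$, are a more careful version of the paper's brief verification.
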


\begin{proof}
    Consider some arbitrary NES type 1 path from $(0,0)$ to $(n-m,k-j)$. Then, append $w$ to the end of this path. This gives us a path from $(0,0)$ to $(n,k)$, and since $w$ does not start with $S$, we did not add any peaks. Furthermore, appending $w$ to $p$, since $w$ stays at $y\ge j-k$ when considered as starting at $y=0$, starting at $y=k-j$, we find that appending $w$ still gives us a path staying at $y\ge 0$. As such, we have an NES type 1 path from $(0,0)$ to $(n,k)$.

    Conversely, consider an NES type 1 path from $(0,0)$ to $(n,k)$ ending in $w$. Removing $w$ from the end of this path gives us an NES type 1 path, as we still have no peaks and stay at $y\ge 0$ in all left prefixes by definition. Furthermore, this path is from $(0,0)$ to $(n-m,k-j)$.

    It is easy to see that these two processes are inverses. As such, we have a bijection, and the number of NES type 1 paths from $(0,0)$ to $(n,k)$ ending in $w$ is equal to the number of NES type 1 paths from $(0,0)$ to $(n-m,k-j)$. This means that such paths are likewise counted by $s_{n-m,k-j}$.
\end{proof}

\begin{proposition}\label{prop:dualbijection}
    The map $w\mapsto w^*$ induces a bijection between $(j_1,j_2,j_3)$-colored NES type 3 paths from $(0,0)$ to $(n,k)$ and $(j_3,j_2,j_1)$-colored NES type 3 paths from $(0,0)$ to $(n,-k)$.
\end{proposition}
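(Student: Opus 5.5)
The plan is to check directly that $w\mapsto w^*$ is a well-defined map between the two sets and that it is its own inverse. Write $w=s_1s_2\cdots s_n$, where each $s_i\in\{N,E,S\}$ carries a color. Then $w^*=\phi(s_n)\phi(s_{n-1})\cdots\phi(s_1)$, where $\phi$ swaps $N$ and $S$, fixes $E$, and keeps the color of each step. A colored $N$ step of $w$ (one of $j_1$ colors) becomes an $S$ step of $w^*$ with the same label. So $w^*$ has $j_1$ colors available on its $S$ steps and $j_3$ on its $N$ steps, while $E$ keeps $j_2$. In other words, $w^*$ is a $(j_3,j_2,j_1)$-colored path.

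\textbf{Endpoint.} The path $w^*$ has $n$ steps. If $w$ has $a$ steps $N$ and $b$ steps $S$ with $a-b=k$, then $w^*$ has $b$ steps $N$ and $a$ steps $S$, so it ends at height $b-a=-k$. Hence $w^*$ runs from $(0,0)$ to $(n,-k)$. Type 3 paths have no height restriction, so nothing else about the route needs checking.

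\textbf{No peaks.} This is the step that needs care. Suppose $w^*$ contains the consecutive pair $NS$, in positions $i,i+1$. By construction these positions equal $\phi(s_{n+1-i})\phi(s_{n-i})$. Hence $s_{n+1-i}=S$ and $s_{n-i}=N$, so $w$ contains $s_{n-i}s_{n-i+1}=NS$ consecutively. This contradicts $w$ being type 3. The point is that reversal turns the adjacent pair $xy$ into $yx$ and $\phi$ turns it into $\phi(y)\phi(x)$, so $NS$ is sent back to $NS$. This self-duality of the forbidden pattern is exactly why type 3 is preserved.

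\textbf{Bijection.} Reversal and $\phi$ are both involutions and they commute, so $(w^*)^*=w$. Applying the same construction to $(j_3,j_2,j_1)$-colored type 3 paths ending at $(n,-k)$ therefore gives a two-sided inverse landing in $(j_1,j_2,j_3)$-colored type 3 paths ending at $(n,k)$, which establishes the bijection. The only genuine obstacle is keeping the color bookkeeping and the pattern-reversal index shift straight; there is no deeper difficulty.
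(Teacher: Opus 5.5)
Your proposal is correct and follows the paper's proof essentially step for step: use $(w^*)^*=w$ to reduce to checking that $w^*$ is a $(j_3,j_2,j_1)$-colored type 3 path ending at $(n,-k)$, via the color bookkeeping, the swap of the $N$ and $S$ counts, and the self-duality of the forbidden pattern $NS$. Your peak argument pulls an $NS$ in $w^*$ back to an $NS$ in $w$. That is actually the cleaner direction compared with the paper's remark that an $NS$ in $w$ maps to an $NS$ in $w^*$.
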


\begin{proof}
    It is easy to see that for any NES path $w$, $(w^*)^*=w$, so it is sufficient to show that for any $(j_1,j_2,j_3)$-colored NES type 3 path $p$ from $(0,0)$ to $(n,k)$, $p^*$ is a $(j_3,j_2,j_1)$-colored NES type 3 path from $(0,0)$ to $(n,-k)$.

    First, we see that any pattern of the form $NS$, when $N$ and $S$ steps are flipped and the order of steps is reversed, is mapped to $NS$. As such, given an $NES$ type 3 path $p$ where the pattern $NS$ is forbidden, $NS$ likewise does not occur in $p^*$. Likewise, $p$ is a path from $(0,0)$ to $(n,k)$ iff in its linear form, there exist $k$ more $N$ steps than $S$ steps. Since $p^*$ has $N$ and $S$ steps flipped, $p^*$ has $k$ more $S$ steps than $N$ steps. As a result, $p^*$ describes an NES type 3 path from $(0,0)$ to $(n,-k)$. Finally, we note that while $p^*$ has $N$ and $S$ steps flipped, and $p\mapsto p^*$ reverses the ordering of steps, $p\mapsto p^*$ does not alter the color associated with each step. As such, while $E$ steps still have $j_2$ colors, $N$ and $S$ steps in $p^*$ have $j_3$ and $j_1$ colors respectively. Thus, we are able to conclude that given a $(j_1,j_2,j_3)$-colored NES type 3 path $p$ from $(0,0)$ to $(n,k)$, $p^*$ is a $(j_3,j_2,j_1)$-colored NES type 3 path from $(0,0)$ to $(n,-k)$.
\end{proof}

\begin{proposition}\label{prop:sdualbijection}
    The map $w\mapsto w^*$ induces a bijection between $(j_1,j_2,j_3)$-colored NES type 1 paths from $(0,0)$ to $(n,0)$ and $(j_3,j_2,j_1)$-colored NES type 1 paths from $(0,0)$ to $(n,0)$.
\end{proposition}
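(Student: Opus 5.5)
Since $(w^*)^*=w$ for every NES path, the map $w\mapsto w^*$ is its own inverse. So it suffices to show one direction: if $p$ is a $(j_1,j_2,j_3)$-colored NES type 1 path from $(0,0)$ to $(n,0)$, then $p^*$ is a $(j_3,j_2,j_1)$-colored NES type 1 path from $(0,0)$ to $(n,0)$. Applying the same statement with the roles of $j_1$ and $j_3$ swapped then gives the inverse map, and hence the bijection.

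Two of the three conditions follow directly from the proof of Proposition~\ref{prop:dualbijection}. Reversing the steps and swapping $N\leftrightarrow S$ sends each occurrence of $NS$ to $NS$, so $p^*$ avoids $NS$ exactly when $p$ does. Colors are attached to individual steps and are carried along unchanged, so the $N$ steps of $p^*$ carry $j_3$ colors, the $S$ steps carry $j_1$ colors, and the $E$ steps carry $j_2$ colors. Also, $p$ has equally many $N$ and $S$ steps, so $p^*$ does too, and $p^*$ ends at $(n,0)$.

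The remaining point, and the only genuinely new one, is that $p^*$ never goes below $y=0$. Write $p=s_1s_2\cdots s_n$, let $h_i$ be the height after the first $i$ steps, and set $h_0=0$. We know $h_0=h_n=0$ and $h_i\ge 0$ for all $i$. The dual path is $p^*=\overline{s_n}\,\overline{s_{n-1}}\cdots\overline{s_1}$, where the bar swaps $N$ and $S$ and fixes $E$. Reversing the step order and negating each vertical increment means that the height of $p^*$ after its first $i$ steps is
\[-(h_n-h_{n-i}) = h_{n-i}-h_n = h_{n-i}.\]
Thus the height profile of $p^*$ is the height profile of $p$ read backwards, that is, $p^*$ is the reflection of $p$ in the vertical line $x=n/2$. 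Hence every height of $p^*$ is nonnegative, and $p^*$ is type 1.

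This height computation is the step where the hypothesis that $p$ ends at height $0$ is essential. For an endpoint $k\neq 0$ the dual path would be shifted down by $k$ and could dip below the axis, which is why the statement is restricted to endpoint $(n,0)$. Once the height identity is established, the rest follows immediately from Proposition~\ref{prop:dualbijection}.
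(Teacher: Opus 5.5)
Your proposal is correct and follows the same structure as the paper: reduce to one direction via $(w^*)^*=w$, take peaklessness, coloring and endpoint from Proposition~\ref{prop:dualbijection}, and use the endpoint height $0$ to control the height of $p^*$. The paper argues the height condition by contradiction, splitting $p^*=qr$ at its first visit to $y=-1$ and observing that $q^*$, a suffix of $p$, would then have to start at $y=-1$; your identity that $p^*$ has height $h_{n-i}$ after $i$ steps is a direct version of the same argument and is slightly cleaner.
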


\begin{proof}
    As in Proposition~\ref{prop:dualbijection}, it is sufficient to show that given some $(j_1,j_2,j_3)$-colored NES type 1 path $p$ from $(0,0)$ to $(n,0)$, $p^*$ is a $(j_3,j_2,j_1)$-colored NES type 1 path from $(0,0)$ to $(n,0)$. Let $p$ be some arbitrary $(j_1,j_2,j_3)$-colored NES type 1 path $p$ from $(0,0)$ to $(n,0)$. By definition, $p$ is also an NES type 3 path, so by Proposition~\ref{prop:dualbijection}, $p^*$ is a $(j_3,j_2,j_1)$-colored NES type 3 path.

    Next, suppose for the sake of contradiction that $p^*$ is not an NES type 1 path. Then, $p^*$ must reach $y=-1$. Take the first point where this occurs. Then, $p^*=qr$ where $q$ is an NES type 3 path with endpoint height $-1$ and $r$ is an NES type 3 path with endpoint height 1. Now, $p=(p^*)^*=r^* q^*$. Since $q$ has endpoint height $-1$, by Proposition~\ref{prop:dualbijection}, $q^*$ has endpoint height 1. Since $p$ ends at $y=0$, $q^*$ likewise ends at $y=0$. As $q^*$ has endpoint height 1, $q^*$ must begin at $y=-1$. However, $p$ is an NES type 1 path, so it cannot reach $y=-1$, and we have a contradiction.

    Thus, by contradiction, we conclude that $p^*$ is an NES type 1 path, and we are done.
\end{proof}

\begin{definition}
    A sequence of steps $p$ forms a dominating sequence if all nonempty left prefixes of $p$ have strictly more $N$ steps than $S$ steps.
\end{definition}

\begin{definition}
    Let $p$ and $q$ be sequences of steps. We say that $p$ and $q$ are circular rotations of one another if there exist some sequences of steps $r$ and $s$ such that $p=rs$ and $q=sr$.
\end{definition}

\begin{proposition}\label{prop:cyclelemma}
    Given a sequence of steps $p$ with exactly $h$ more $N$ steps than $S$ steps, there are exactly $h$ circular rotations of $p$ which form dominating sequences.
\end{proposition}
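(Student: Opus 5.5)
This is the cycle lemma of Dvoretzky and Motzkin. I will prove it with a height-function argument, treating $E$ steps as contributing $0$ to height; they change nothing but the indexing. Write $p=s_1s_2\cdots s_m$ and assign weights $w(N)=1$, $w(E)=0$, $w(S)=-1$. Set $H_0=0$ and $H_t=\sum_{i\le t} w(s_i)$, so $H_m=h$. I assume $h\ge 1$ (for $h=0$ no rotation can be dominating, since the full sequence itself is a nonempty prefix with no excess). I also assume rotations are counted as the $m$ cut positions $t\in\{0,\dots,m-1\}$, with $q_t=s_{t+1}\cdots s_m s_1\cdots s_t$. If distinct cut positions are meant, this is the natural reading, and the statement needs it when $p$ is periodic.

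Extend the heights periodically by $H_{t+m}=H_t+h$. The rotation $q_t$ is dominating exactly when every nonempty prefix has positive weight, that is, when $H_u-H_t>0$ for all $u$ with $t<u\le t+m$. Put differently, $t$ must be a \emph{strict right-to-left minimum} over the window $(t,t+m]$. Because $H_{u+m}=H_u+h>H_u$, this is equivalent to the condition $H_t<H_u$ for all $u>t$, taken over the whole infinite extended sequence.

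The heart of the proof is counting such indices $t\in\{0,\dots,m-1\}$. For each level $c$, let $\tau(c)=\max\{t\ge 0: H_t=c\}$ whenever this is finite. Steps change the height by at most $1$ and the heights drift to $+\infty$, so every value $c\ge\min_{0\le t<m} H_t$ is attained, and its last visit is a finite time. I claim the good indices are exactly the last visits $\tau(c)$. If $t$ is good, then $t=\tau(H_t)$. Conversely, if $t=\tau(c)$ and some $u>t$ had $H_u\le c$, then since the heights increase by $h$ per period, the walk would return to level $c$ after $u$ by the $\pm1,0$ step property, contradicting maximality.

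It remains to count how many last-visit times fall in $[0,m)$. Shifting by $m$ maps $\tau(c)\mapsto\tau(c+h)=\tau(c)+m$. So the last-visit times form $h$ residue classes mod $m$, one for each of $h$ consecutive levels, and each class has exactly one representative in $[0,m)$. I will check that the representatives are distinct and that levels below the minimum contribute nothing. This yields exactly $h$ good indices.

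The main obstacle I expect is bookkeeping around $E$ steps and the index range. A flat stretch at the last-visit level must pick only its final index, which the strict inequality in the definition ensures. The cleanest way to handle this is to stick to the ``last visit'' formulation throughout.
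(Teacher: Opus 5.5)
Your proof is correct, but it takes a different route from the paper. The paper gives no argument: it simply cites the Cycle Lemma of Dvoretzky and Motzkin \cite{cyclelemmapaper}. Your last-visit argument makes the result self-contained, and it makes explicit two things the citation leaves implicit.

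First, the classical lemma is stated for two-letter ($\pm1$) sequences, but here $E$ steps of weight $0$ are present. Your height function with strict inequalities handles them directly: a cut just before an $E$ step can never be a last visit, since the next index has the same height. The paper instead relies later on an informal ``ignore the $E$ steps'' reduction.

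Second, you correctly note that ``exactly $h$ rotations'' holds only when rotations are counted by cut positions $t\in\{0,\dots,m-1\}$. For periodic $p$ it fails otherwise: $p=NN$ has $h=2$ but only one distinct rotated sequence.

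The one step you defer is easy to close. Set $\mu=\min_{0\le t<m}H_t$. Every $t\ge m$ has $H_t=H_{t-m}+h\ge \mu+h$, so the last visits to the $h$ levels $\mu,\dots,\mu+h-1$ all lie in $[0,m)$. They are distinct because they have distinct heights. For $c\ge\mu+h$ you have $\tau(c)=\tau(c-h)+m\ge m$. Hence there are exactly $h$ good cut positions.
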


\begin{proof}
    This is a statement of a well-known combinatorial result known as the Cycle Lemma which was proven in~\cite{cyclelemmapaper}.
\end{proof}

\section{Path Enumeration}\label{sec:counting}
\subsection{Peakless Paths}
\begin{definition}
    Let $s_{n,k}^{j_1,j_2,j_3}$ be the number of $(j_1,j_2,j_3)$-colored NES type 1 paths starting at $(0,0)$ and ending at $(n,k)$.
\end{definition}

\begin{definition}
    Let $S_{j_1,j_2,j_3}=(s_{n,k}^{j_1,j_2,j_3})_{n,k\ge 0}$ be an infinite matrix.
\end{definition}

\begin{notation}
    We let $S=(s_{n,k})_{n,k \ge 0}$ refer to $S_{j_1,j_2,j_3}$ for our arbitrary fixed choice of $j_1$, $j_2$, and $j_3$.
\end{notation}

\begin{theorem}\label{thm:srecurrence}
    $S$ obeys the following recurrences:
    \begin{enumerate}
        \item[(i)] For $n\ge 1$, $s_{n,0}=j_2\sum_{i\ge 0} {(j_3)}^i * s_{n-1-i,k+i}$.
        \item[(ii)] For $n,k\ge 1$, $s_{n,k}=j_1*s_{n-1,k-1}+j_2\sum_{i\ge 0} (j_3)^i * s_{n-1-i,k+i}$.
    \end{enumerate}
\end{theorem}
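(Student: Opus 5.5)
Note first that in (i) the index $k$ should be read as $0$, so the claim is $s_{n,0}=j_2\sum_{i\ge 0} j_3^i\, s_{n-1-i,i}$. Part (i) then becomes the $k=0$ case of part (ii), with the term $j_1 s_{n-1,-1}$ absent. The plan is to classify each $(j_1,j_2,j_3)$-colored NES type 1 path from $(0,0)$ to $(n,k)$, with $n\ge 1$, by the shape of its final maximal run of $S$ steps, and to count each class using Proposition~\ref{prop:spathends}.

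\textbf{Decomposition.} Let $p$ be such a path. Either $p$ ends in an $N$ step, or $p$ ends in a word $ES^i$ for some $i\ge 0$. To see this, strip the final run of $S$ steps, say $S^i$ with $i\ge 1$. The step just before the run cannot be $N$, because $NS$ is forbidden. It also cannot be missing: $p$ starts at height $0$, so it cannot begin with $S$. Hence the preceding step is $E$. If instead $i=0$, the last step is $E$ or $N$. These cases are disjoint and exhaustive. An ending $N$ is only possible when $k\ge 1$, since a last $N$ step would otherwise come from height $-1$.

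\textbf{Counting each class.} For the ending $w=N$, with $m=1$ and $j=1$: the word $w$ does not begin with $S$, and it stays at $y\ge 0\ge j-k$ whenever $k\ge 1$. Proposition~\ref{prop:spathends} therefore gives $s_{n-1,k-1}$ underlying paths. Multiplying by the $j_1$ colors of that step gives $j_1 s_{n-1,k-1}$.

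For the ending $w=ES^i$, we have $m=i+1$ and $j=-i$. Again $w$ does not begin with $S$, and its minimum height is $-i$, which is at least $j-k=-i-k$. Proposition~\ref{prop:spathends} gives $s_{n-1-i,k+i}$ underlying paths. The colors of $w$ contribute a factor $j_2 j_3^{\,i}$.

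We also need the hypothesis that $w$ is an NES type 3 path. This holds because neither $N$ nor $ES^i$ contains $NS$. Strictly, Proposition~\ref{prop:spathends} is stated for uncolored or fixed-color endings. With a fixed coloring of $w$, the bijection is color-preserving, so summing over the colorings of $w$ produces exactly these multiplicative factors.

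\textbf{Summation and main obstacle.} Summing over $i\ge 0$ gives (ii). The sum is effectively finite, because $s_{a,b}=0$ when $a<0$ or $b>a$. Dropping the $N$ term gives (i). The only delicate point is the exhaustiveness argument in the decomposition. One must check that a final $S$-run is always preceded by an $E$, never by an $N$ or by the start of the path. One must also check that when $k=0$ the path cannot end in $N$. Both follow from the peakless condition and the $y\ge 0$ condition, as shown above.
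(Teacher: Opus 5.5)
Your proof is correct and takes essentially the same route as the paper: split paths by whether they end in $N$ or in $ES^i$, count each class via Proposition~\ref{prop:spathends}, and multiply by the $j_1$ or $j_2 j_3^{\,i}$ colorings. Reading the stray index $k$ in (i) as $0$, and checking explicitly that a final $S$-run must be preceded by an $E$, supply details the paper leaves implicit.
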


\begin{proof}
    Given a path counted by $s_{n,k}$, it either ends on $N$ or ends on $ES^i$ for some $i$. By Proposition~\ref{prop:spathends}, for $k\ge 1$, there are $s_{n-1,k-1}$ NES type 1 paths from $(0,0)$ to $(n,k)$ with a final step of $N$ given a fixed color of $N$. As there are $j_1$ colors for $N$, we then have $j_1*s_{n-1,k-1}$ paths ending in $N$.

    Similarly, for any $i\ge 0$, there are $j_2*{(j_3)}^i$ different possible colorings for $ES^i$. Again, by Proposition~\ref{prop:spathends}, there are $j_2*{(j_3)}^i*s_{n-1-i,k+i}$ paths ending in $ES^i$. Thus, for $k\ge 1$, 
    \[s_{n,k}=j_1*s_{n-1,k-1}+j_2\sum_{i\ge 0} (j_3)^i * s_{n-1-i,k+i}.\]

    Meanwhile, for $k=0$, NES type 1 paths from $(0,0)$ to $(n,k)$ can only end in $ES^i$ for some $i\ge 0$, since ending on $N$ would mean that the path reached $y=-1$, which is impossible. As such, we do not include the $s_{n-1,k-1}$ term and instead find that
    \[s_{n,0}=j_2\sum_{i\ge 0} {(j_3)}^i * s_{n-1-i,k+i}\]

    as desired.
\end{proof}

\begin{corollary}
    $S$ is the Riordan matrix $(s(z),j_1*zs(z))$.
\end{corollary}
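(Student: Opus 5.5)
We will read off a $B$-matrix for $S$ from Theorem~\ref{thm:srecurrence} and then apply Propositions~\ref{prop:bmultformula} and~\ref{prop:genbell}. Recurrence (ii) says that for $n,k\ge 0$,
\[s_{n+1,k+1}=j_1 s_{n,k}+\sum_{i\ge 0} j_2 (j_3)^i\, s_{n-i,k+1+i}.\]
This is exactly an $A$-matrix of the special form required of a $B$-matrix, with $\beta_{0,0}=j_1$ and $\beta_{i,i+1}=j_2 (j_3)^i$, all other entries zero. The generating function is $B(z)=\sum_{i\ge 0} j_2 j_3^i z^i=\frac{j_2}{1-j_3z}$. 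Since $S$ is lower triangular (a path of length $n$ cannot reach height $k>n$) and $\beta_{0,0}=j_1\ne 0$, Theorem~\ref{thm:amatrixcharacterization} shows that $S$ is a proper Riordan array, say $S=(g,f)$.

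Next, recurrence (i) gives $s_{n+1,0}=\sum_{i\ge0}\beta_{i,i+1}s_{n-i,i}$, which is the hypothesis of Proposition~\ref{prop:genbell} with $k=0$. I read the "$k+i$" in the statement as "$i$" when $k=0$, and will note this. Since $s_{0,0}=1$ (the empty path), Proposition~\ref{prop:genbell} gives $g=\frac{f}{j_1 z}$.

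Set $s(z):=g(z)$, the generating function of column $0$. Then $f=j_1 z\, s(z)$, so
\[S=(s(z),\,j_1 z s(z)),\]
which is the claim. For completeness, Proposition~\ref{prop:bmultformula} also pins down $f$ uniquely (Theorem~\ref{thm:uniquemultiplier}) through
\[f=j_1z+\frac{j_2 z f}{1-j_3 z f}.\]
This yields a functional equation for $s$ that the paper can use later, but the corollary itself does not need it.

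The main point requiring care is checking that the index conventions of Theorem~\ref{thm:srecurrence} match those of the $B$-matrix definition exactly. Recurrence (ii) is stated for $n,k\ge1$; the shift $n\mapsto n+1$, $k\mapsto k+1$ converts it to the form $m_{n+1,k+1}=\dots$ for all $n,k\ge0$, with terms $s_{n-i,\cdot}$ vanishing for $n-i<0$. The other point is properness, which needs $g(0)=1\neq0$ and $f$ of order exactly $1$. This holds because $[z^1]f=j_1 s_{0,0}=j_1$, assuming $j_1\ge1$.
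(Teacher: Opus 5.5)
Your proposal is correct and follows essentially the same route as the paper. Both read off the $B$-matrix $(j_1,\frac{j_2}{1-j_3z})$ from Theorem~\ref{thm:srecurrence}, apply Proposition~\ref{prop:genbell} with $s_{0,0}=1$ to get $S=(\frac{f}{j_1 z},f)$, and identify the first-column generating function with $s(z)$ to conclude $f=j_1zs(z)$. You also make explicit two things the paper leaves implicit: properness, via Theorem~\ref{thm:amatrixcharacterization}, and the index typo ($k+i$ should be $i$ in the column-$0$ recurrence).
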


\begin{proof}
    The recurrence in Theorem~\ref{thm:srecurrence} gives us a $B$-matrix for $S$. Moreover, we know that 
    \[s_{n,0}=j_2\sum_{i=0}^{n-1} (j_3)^i * s_{n-1-i,k+i}\]

    and so by Proposition~\ref{prop:genbell}, $S=(\frac{f}{j_1 z}, f)$ for some $f$. Additionally, $S=(s,f)$, so $f(z)=j_1 zs(z)$. As such, $S=(s(z),j_1zs(z))$. 
\end{proof}

\begin{proposition}
    For arbitrary $j_2,j_3\ge 1$, $S_{1,1,j_3}^{j_2}=S_{1,j_2,j_3}$.
\end{proposition}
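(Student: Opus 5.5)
The idea is to work entirely in the Riordan group and reduce the identity to a statement about the multipliers. Write $f_j(z)$ for the multiplier of $S_{1,j,j_3}$. By the Corollary to Theorem~\ref{thm:srecurrence}, with $j_1=1$ we have $S_{1,j,j_3}=(s,zs)$, so $f_j=zs$ and the first component is $f_j(z)/z$. It then suffices to prove the semigroup property
\[f_b(f_a(z))=f_{a+b}(z)\qquad (a,b\ge 1).\]
Indeed, granting this, the product rule (i) of the Riordan group gives
\[S_{1,a,j_3}S_{1,b,j_3}=\Bigl(\tfrac{f_a(z)}{z}\cdot\tfrac{f_b(f_a(z))}{f_a(z)},\,f_b(f_a(z))\Bigr)=\Bigl(\tfrac{f_{a+b}(z)}{z},\,f_{a+b}(z)\Bigr)=S_{1,a+b,j_3}.\]
Induction on $j_2$, starting from $S_{1,1,j_3}^1=S_{1,1,j_3}$, then yields $S_{1,1,j_3}^{j_2}=S_{1,j_2,j_3}$.

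To get a handle on $f_j$, I will use Proposition~\ref{prop:bmultformula}. The recurrence of Theorem~\ref{thm:srecurrence} gives the $B$-matrix $\beta_{0,0}=1$ and $B(z)=j\sum_{i\ge0}j_3^i z^i=\frac{j}{1-j_3z}$. Hence
\[f_j=z+\frac{j\,zf_j}{1-j_3zf_j}.\]
Rearranging (dividing by $zf_j$ and multiplying by $1-j_3zf_j$), this becomes
\[\Bigl(\tfrac1z-\tfrac1{f_j}\Bigr)(1-j_3zf_j)=j.\]
Expanding the left side and setting $\psi(x)=\frac1x+j_3x$, the relation takes the telescoping form
\[\psi(z)-\psi(f_j(z))=j.\]
This is the crucial observation, and finding it is the main obstacle. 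A direct attack, composing the two quadratic functional equations, would be messy; the invariant $\psi$ makes the composition trivial.

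With $\psi$ in hand, set $w=f_a(z)$ and $h=f_b(w)$. Then
\[\psi(z)-\psi(h)=\bigl(\psi(z)-\psi(w)\bigr)+\bigl(\psi(w)-\psi(h)\bigr)=a+b.\]
So $h$ satisfies the defining equation for $f_{a+b}$. Note that $h$ is a power series of order $1$ with leading coefficient $1$, since both $f_a$ and $f_b$ are.

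The remaining step is uniqueness: I must check that the equation $f=z+\frac{(a+b)zf}{1-j_3zf}$, equivalently $\psi(z)-\psi(f)=a+b$ for $f$ of order $1$, has only one power series solution. This follows because the fixed-point form determines the coefficient of $z^{n+1}$ in $f$ from lower coefficients; alternatively it follows from Theorem~\ref{thm:uniquemultiplier}, since the $B$-matrix determines the multiplier. Note also that multiplying through by $zf$ is reversible, as $zf\neq0$ as a series, so the $\psi$-form and the fixed-point form are genuinely equivalent. Therefore $h=f_{a+b}$, and the proof is complete.
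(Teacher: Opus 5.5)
Your proof is correct, and it takes a genuinely different route from the paper.

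The paper converts the $B$-matrix $(1,\frac{1}{1-j_3z})$ into a $\Delta$-sequence. It then quotes Theorem 4.2 of the cited $\Delta$-sequence paper to get that $S_{1,1,j_3}^{j_2}$ has $\Delta$-sequence $\frac{j_2}{1-j_3z}$. Finally it uses Theorem~\ref{thm:uniquemultiplier} and membership in the Bell subgroup to identify this power with $S_{1,j_2,j_3}$.

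You instead prove the semigroup law $f_b\circ f_a=f_{a+b}$ directly. The equation of Proposition~\ref{prop:bmultformula} is equivalent to $\psi(z)-\psi(f_j)=j$, so $\psi(x)=\frac1x+j_3x$ conjugates every multiplier to a translation. The Bell product rule plus uniqueness of the power-series solution then closes the induction. Your coefficient recursion is the right justification of uniqueness. The alternative via Theorem~\ref{thm:uniquemultiplier} would also need the easy converse of Proposition~\ref{prop:bmultformula}, namely that $(h/z,h)$ really has the stated $B$-matrix.

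The paper's argument is shorter but rests entirely on the external theorem. Yours is self-contained and shows why the result holds: $\frac1z-\frac1f=B(zf)$ separates into $\psi(z)-\psi(f)$ because $1/B(z)=(1-j_3z)/j$ is affine in $z$. This matters, because additivity under powers is not a property of $\Delta$-sequences in general, as the wording of Open Problem 4.1 suggests. For example, take $\Delta(z)=1+z$, i.e.\ $f=z+zf+z^2f^2$. The multiplier $f\circ f$ of the square and the multiplier with $\Delta$-sequence $2+2z$ agree through $z^7$, but their $z^8$-coefficients are $686$ and $688$.

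Your method also gives more. The $\psi$-argument works unchanged for real parameters, so $t\mapsto S_{1,t,j_3}:=(f_t/z,f_t)$ is a one-parameter subgroup. Combined with $(1,\lambda z)\,S_{1,j,j_3}\,(1,\lambda z)^{-1}=S_{1,\lambda j,\lambda^2 j_3}$, this yields $S_{1,1,j_3}=(1,\sqrt{j_3}\,z)\,S_{1,1/\sqrt{j_3},1}\,(1,\sqrt{j_3}\,z)^{-1}$. That identity bears directly on Open Problem 4.1.
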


\begin{proof}
    We know by Theorem~\ref{thm:srecurrence} that $S_{1,1,j_3}$ is in the Bell subgroup and has a $B$-matrix of the form $(1,\frac{1}{1-j_3 z})$. Then, $S_{1,1,j_3}$ equivalently has a $\Delta$-sequence generated by $\frac{1}{1-j_3 z}$ where we define a $\Delta$-sequence as in~\cite{deltasequence}. By Theorem 4.2 in~\cite{deltasequence}, $S_{1,1,j_3}^{j_2}$ is a matrix in the Bell subgroup with $\Delta$-sequence generated by $\frac{j_2}{1-j_3 z}$. This means that $S_{1,1,j_3}^{j_2}$ is equivalently a Bell matrix with a $B$-matrix $(1,\frac{j_2}{1-j_3 z})$. This is the same $B$-matrix that $S_{1,j_2,j_3}$ has, so by Theorem~\ref{thm:uniquemultiplier}, $S_{1,j_2,j_3}$ and $S_{1,1,j_3}^{j_2}$ have the same multiplier. Since both are also in the Bell subgroup, they must be the same matrix.
\end{proof}

\begin{remark}
    This generalizes a result in~\cite{evanspaper}.
\end{remark}

\begin{proposition}\label{prop:pseudoinvolution}
    For arbitrary $j_1,j_2,j_3\ge 1$, $S_{j_1,j_2,j_3}$ is a pseudo-involution iff $j_1=1$.
\end{proposition}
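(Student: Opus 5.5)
We compute with the multiplier directly. By the Corollary above, $S_{j_1,j_2,j_3}=(s(z),f(z))$ with $f(z)=j_1zs(z)$. Theorem~\ref{thm:srecurrence} gives a $B$-matrix with $\beta_{0,0}=j_1$ and $\beta_{i,i+1}=j_2(j_3)^i$, so $B(z)=\frac{j_2}{1-j_3z}$. Proposition~\ref{prop:bmultformula} then yields the functional equation
\[f=j_1z+\frac{j_2\,zf}{1-j_3zf},\qquad\text{equivalently}\qquad (f-j_1z)(1-j_3zf)-j_2zf=0 .\]
By the multiplication rule, $(s,-f)\cdot(s,-f)=\bigl(s(z)s(-f(z)),\,-f(-f(z))\bigr)$. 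So $S$ is a pseudo-involution iff
\[f(-f(z))=-z \qquad\text{and}\qquad s(z)\,s(-f(z))=1 .\]

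\emph{Necessity.} The lowest-order term of $f$ is $j_1z$, since $s_{0,0}=1$. Hence $f(-f(z))=-j_1^2z+O(z^2)$. The condition $f(-f(z))=-z$ forces $j_1^2=1$, and since $j_1\ge 1$ this gives $j_1=1$.

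\emph{Sufficiency.} Let $j_1=1$ and put $F(w,y)=(y-w)(1-j_3wy)-j_2wy$. Then $y=f(w)$ is a formal power series with $y(0)=0$ solving $F(w,y)=0$. Since $\partial F/\partial y(0,0)=1\neq0$, the coefficients of a solution are determined recursively, so $f$ is the unique such solution. This formal implicit function uniqueness is the step I expect to need the most care, though it is routine by comparing coefficients of $w^n$.

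The key observation is that $F(-y,-w)=F(w,y)$. Indeed,
\[F(-y,-w)=(-w+y)(1-j_3yw)-j_2yw=F(w,y).\]
Now set $w'=-f(z)$ and $y'=-z$. Then $F(w',y')=F(z,f(z))=0$, with $w'$ of order $1$ and $y'(0)=0$.

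We want to conclude $y'=f(w')$, i.e.\ $-z=f(-f(z))$. To do this, view the uniqueness statement after the change of variable $w'=-f(z)$. Since $-f$ has order exactly $1$, it is invertible under composition, so $y'$ can be written as $Y(w')$ for a series $Y$ with $Y(0)=0$. This $Y$ satisfies $F(w',Y(w'))=0$, so by uniqueness $Y=f$. This gives $f(-f(z))=-z$.

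For the first component, $j_1=1$ gives $s=f/z$. Hence
\[s(-f(z))=\frac{f(-f(z))}{-f(z)}=\frac{-z}{-f(z)}=\frac{z}{f(z)},\]
and therefore $s(z)s(-f(z))=\frac{f}{z}\cdot\frac{z}{f}=1$. Thus $(s,-f)^2=(1,z)$, and $(s,-f)\neq(1,z)$ because its multiplier is $-z+O(z^2)\neq z$. So $(s,-f)$ has order exactly $2$, and $S$ is a pseudo-involution.
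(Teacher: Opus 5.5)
Your proof is correct. Your necessity argument is the same as the paper's. The paper observes that the diagonal of $(s,-j_1zs)^2$ consists of powers of $j_1^2$, which is exactly your computation that the coefficient of $z$ in $f(-f(z))$ is $-j_1^2$.

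For sufficiency you take a genuinely different route. The paper only remarks that for $j_1=1$ the array $S$ is of Bell type with both types of $B$-sequence. It then cites He's result on pseudo-involutions, and never verifies the type-II hypothesis in the text. You instead prove the involution property directly:
\begin{itemize}
\item the functional equation from Proposition~\ref{prop:bmultformula} is invariant under $(w,y)\mapsto(-y,-w)$;
\item uniqueness of the formal solution (valid because $\partial F/\partial y(0,0)=1$) then forces $-f$ to be its own compositional inverse;
\item the Bell property $s=f/z$ immediately gives $s(z)s(-f(z))=1$.
\end{itemize}

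Your argument is self-contained and shows exactly where $j_1=1$ enters. For general $j_1$, the term $j_1w$ turns $F(-y,-w)$ into $(j_1y-w)(1-j_3wy)-j_2wy\neq F(w,y)$. It also generalizes verbatim: $y-w=wyB(wy)$ is invariant under the same substitution for any $B$. So you have in effect reproved the relevant direction of the cited Bell-type theorem. The paper's route is shorter but rests entirely on that external result.
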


\begin{proof}
    If $j_1=1$, $S$ is a Bell-type Riordan matrix with both types of $B$-sequence. As such, $S$ is a pseudo-involution by~\cite{hepaper}.

    If $j_1 > 1$, then $S=(s(z),j_1*zs(z))$. We know that $S$ is a pseudo-involution iff $(s(z),-j_1*zs(z))$ has order 2. However, the diagonal of $(s(z),-j_1*zs(z))$ has entries which are powers of $-j_1$, so the diagonal of $(s(z),-j_1*zs(z))$ has entries which are powers of $j_1^2$. Since $j_1 > 1$, this cannot be the identity, so $S$ is not a pseudo-involution.
\end{proof}

\begin{corollary}
    $S^{-1}=(s(-\frac{z}{j_1}),\frac{z}{j_1}s(-\frac{z}{j_1}))$.
\end{corollary}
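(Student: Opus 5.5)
The plan is to reduce to the case $j_1=1$, where Proposition~\ref{prop:pseudoinvolution} applies, by factoring $S$ as a product of a Bell-type array and a diagonal scaling matrix. The observation behind this is a weight count. Take an NES type 1 path from $(0,0)$ to $(n,k)$ with $a$ steps $N$, $c$ steps $E$ and $b$ steps $S$, so that $a-b=k$. Its number of colorings is
\[
j_1^a j_2^c j_3^b = j_1^{k}\, j_2^c\, (j_1j_3)^b .
\]
Hence, entrywise,
\[
s^{j_1,j_2,j_3}_{n,k}=j_1^k\, s^{1,j_2,j_1j_3}_{n,k}.
\]

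Write $S'=S_{1,j_2,j_1j_3}$ and $D=\mathrm{diag}(j_1^k)_{k\ge 0}=(1,j_1z)$. The entrywise identity says exactly that $S=S'D$. Setting $k=0$ shows that the $0$-th columns of $S$ and $S'$ coincide, so they have the same generating function $s(z)$. By the Corollary following Theorem~\ref{thm:srecurrence} applied with $j_1=1$, this gives $S'=(s(z),zs(z))$. As a check, the group law then yields $S'D=(s(z),j_1zs(z))=S$, consistent with the same Corollary for $S$.

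Next I use that $S'$ is a pseudo-involution by Proposition~\ref{prop:pseudoinvolution}. This means $(s,-zs)^2=I$, and since $(s,-zs)=(s,zs)(1,-z)$, we get $S'^{-1}=(1,-z)\,S'\,(1,-z)$. Computing with the multiplication rule,
\[
(1,-z)(s(z),zs(z))=(s(-z),-zs(-z)),
\]
and multiplying on the right by $(1,-z)$ gives
\[
S'^{-1}=(s(-z),zs(-z)).
\]

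Finally, $S^{-1}=D^{-1}S'^{-1}$ with $D^{-1}=(1,z/j_1)$, which I would confirm directly from the inverse formula for the Riordan group. Then
\[
(1,\tfrac{z}{j_1})\,(s(-z),zs(-z))=\Bigl(s(-\tfrac{z}{j_1}),\,\tfrac{z}{j_1}s(-\tfrac{z}{j_1})\Bigr),
\]
which is the claim.

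I expect the main point needing care to be the justification of $S=S'D$. In particular, one must check that the two matrices share the same $s(z)$ and not merely the same multiplier shape; the weight argument at $k=0$ settles this, since there $a=b$. The remaining steps are routine applications of the group law.
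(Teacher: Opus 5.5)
Your proof is correct and follows the same route as the paper. You factor $S=(s(z),zs(z))\cdot(1,j_1z)$, where $(s(z),zs(z))=S_{1,j_2,j_1j_3}$ is a pseudo-involution by Proposition~\ref{prop:pseudoinvolution}, invert that factor as $(s(-z),zs(-z))$, and multiply on the left by $(1,z/j_1)$. Your coloring-weight count $j_1^aj_2^cj_3^b=j_1^k j_2^c(j_1j_3)^b$ and your direct computation $S'^{-1}=(1,-z)\,S'\,(1,-z)$ make explicit two steps that the paper only asserts or cites from the literature.
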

\begin{proof}
    Consider $(s(z), zs(z))$. This corresponds to 1 color for $N$ and $j_1*j_3$ colors for $S$, so by Proposition~\ref{prop:pseudoinvolution}, this matrix is a pseudo-involution. As such, by Proposition 2 in~\cite{algebraicriordanstructure}, 
    \[(s(z),zs(z))^{-1}=(s(-z),zs(-z)).\] 
    
    Now, we know that $S=(s(z),zs(z))*(1,j_1 z)$, so 
    \[S^{-1}=(1,\frac{1}{j_1} z)*(s(z),zs(z))^{-1}=(s(-\frac{z}{j_1}),\frac{z}{j_1}s(-\frac{z}{j_1})).\]
\end{proof}

\begin{proposition}\label{prop:generatingexpression}
    $j_1 j_3 * z^2 s^2(z)-j_1 j_3*z^2 s(z)+j_2 *zs(z)-s(z)+1=0$.
\end{proposition}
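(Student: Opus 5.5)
We derive the identity from the $B$-matrix of $S$ via Proposition~\ref{prop:bmultformula}, then clear denominators.

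By Theorem~\ref{thm:srecurrence}(ii), $S$ has a $B$-matrix with $\beta_{0,0}=j_1$ and $\beta_{i,i+1}=j_2 j_3^{\,i}$ for $i\ge 0$. Hence
\[
B(z)=\sum_{i\ge 0} j_2 j_3^{\,i} z^i=\frac{j_2}{1-j_3 z}.
\]
By the Corollary following Theorem~\ref{thm:srecurrence}, the multiplier is $f(z)=j_1 z s(z)$. Proposition~\ref{prop:bmultformula} then gives
\[
j_1 z s(z) = j_1 z + j_1 z s(z)\cdot \frac{j_2}{1-j_3 z\cdot j_1 z s(z)}.
\]
We divide by $j_1 z$, which is allowed since we work with formal power series and $j_1\ge 1$. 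This yields
\[
s=1+\frac{j_2 s}{1-j_1 j_3 z^2 s}.
\]

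Next we multiply through by $1-j_1 j_3 z^2 s$ and collect terms:
\[
s - j_1 j_3 z^2 s^2 = 1 - j_1 j_3 z^2 s + j_2 s,
\]
that is,
\[
j_1 j_3 z^2 s^2 - j_1 j_3 z^2 s + j_2 s - s + 1 = 0 .
\]

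This does not match the statement: the claimed identity has $j_2 z s$ where we get $j_2 s$. The discrepancy arises because the recurrence of Theorem~\ref{thm:srecurrence} is written with index $n-1-i$, while the general $B$-matrix convention $m_{n+1,k+1}=\beta_{0,0}m_{n,k}+\sum_i\beta_{i,i+1}m_{n-i,k+1+i}$ uses $n-i$. Matching the two, the horizontal-then-descents term $ES^i$ has length $i+1$ but lowers the row index only by $i$, so the appropriate $B$-series is effectively $zB$.

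The main obstacle is therefore this index shift, and I would resolve it by a direct first-step/last-step decomposition on paths rather than relying on the $B$-matrix convention. Let $F(z,u)=\sum_{n,k} s_{n,k} z^n u^k$, so that $s=F(z,0)$ is the generating function of paths ending at height $0$. An $N$ step contributes $j_1 z u$, an $E$ step contributes $j_2 z$, and an $S$ step contributes $j_3 z u^{-1}$.

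Alternatively, and I expect more cleanly, I would decompose a type 1 path ending at height $0$ by its first return structure. Such a path is either empty; or begins with an $E$ step followed by another such path, giving $j_2 z s$; or begins with $N$, followed by an elevated piece, then its first return $S$, then another such path.

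The elevated piece must be nonempty and must not end in $N$ followed by $S$. Hence it is a nonempty height-$0$ path not ending in $N$. A height-$0$ path can never end in $N$, so the elevated piece is any nonempty height-$0$ path, contributing $s-1$. This gives
\[
s = 1 + j_2 z s + j_1 j_3 z^2 (s-1)s,
\]
which rearranges exactly to the stated identity. I would present this decomposition as the proof and use the $B$-matrix computation above, with the factor $z$ restored in $B$, only as a consistency check.
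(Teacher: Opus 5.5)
Your final argument, the first-return decomposition $s = 1 + j_2 z s + j_1 j_3 z^2 (s-1)s$, is correct and gives the statement. The block between the initial $N$ and the first return $S$ is a nonempty height-$0$ type 1 path shifted up by one. It cannot start with $S$ and cannot end with $N$, so no peak arises at either junction. The colour weights $j_1z$, $j_2z$, $j_3z$ are all accounted for. This is a different route from the paper's proof, which is exactly the $B$-matrix computation you abandoned.

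You abandoned that computation on a wrong diagnosis. There is no index shift between Theorem~\ref{thm:srecurrence} and the $B$-matrix convention. Written for $s_{n+1,k+1}$, part (ii) reads $s_{n+1,k+1}=j_1 s_{n,k}+\sum_{i\ge 0} j_2 j_3^{\,i}\, s_{n-i,k+1+i}$. This is precisely the convention with $\beta_{0,0}=j_1$ and $B(z)=j_2/(1-j_3z)$, as the paper states.

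The actual error is that you dropped a factor when applying Proposition~\ref{prop:bmultformula}. That proposition reads $f=\beta_{0,0}z+z f(z)B(zf(z))$, not $f=\beta_{0,0}z+f(z)B(zf(z))$. With $f=j_1zs$ it gives
\[
j_1 z s = j_1 z + j_1 z^2 s\cdot\frac{j_2}{1-j_1 j_3 z^2 s},
\qquad\text{so}\qquad
s = 1+\frac{j_2\, z s}{1-j_1 j_3 z^2 s},
\]
and clearing the denominator yields the stated identity directly. The extra $z$ is already built into Proposition~\ref{prop:bmultformula}: the left side of the recurrence sits in row $n+1$, while the right-hand terms sit in rows $n$ and $n-i$. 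So describing the fix as replacing $B$ by $zB$ is misleading, and your consistency check should simply apply the proposition as stated.

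Comparing the two routes: the paper's proof is a two-line consequence of machinery it has already built, namely Theorem~\ref{thm:srecurrence}, the identification $S=(s,j_1zs)$, and Proposition~\ref{prop:bmultformula}. It fits the paper's theme of reading functional equations off $B$-matrices. Your decomposition needs none of that machinery and works directly with paths. It also shows why only $j_1j_3$ and $j_2$ enter: each $N$ is matched to its return $S$ in the single factor $j_1j_3z^2$, which is the point of the remark after the proposition. The paper later gives a separate combinatorial proof through valleyless paths, via $j_2t_{n,0}-j_2s_{n,0}=j_3t_{n-1,1}$. Your first-return argument is a more direct alternative to that as well.
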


\begin{proof}
    For $S$, our $B$-matrix is $(j_1,\frac{j_2}{1-j_3 z})$. By Proposition~\ref{prop:bmultformula}, 
    \[j_1*zs(z)=j_1 z + j_1 * z^2 s(z) *\frac{j_2}{1-j_3*j_1*z^2 s(z)}\]

    And so it follows that
    \[s(z)=1+zs(z)*\frac{j_2}{1-j_3*j_1*z^2 s(z)}\]

    Which simplifies to
    \[s(z)-j_1 * j_3 * z^2 s^2(z)=1-j_3*j_1*z^2 s(z)+zs(z)*j_2\]

    And we then find that
    \[j_1 j_3 * z^2 s^2(z)-j_1 j_3*z^2 s(z)+j_2 *zs(z)-s(z)+1=0.\]
\end{proof}

\begin{remark}
     This means that $s(z)=1+zs(z)*\frac{j_2}{1-j_3 j_1 z^2 s(z)}$ is uniquely defined by the values of $j_1*j_3$ and $j_2$. This is consistent with the bijection given in Proposition~\ref{prop:sdualbijection}. It also makes sense when we consider it using the RNA secondary structure interpretation, as colors for $N$ and $S$ can be interpreted as being colors for the arc between a pair of bonded nucleotides. Furthermore, this expression generalizes an equation given in~\cite{evanspaper}.
\end{remark}

\begin{theorem}
    For $n\ge 1$, $h\ge 0$, 
    \[s_{n+h,h}=(h+1)\sum_{k=\lceil \frac{n+1}{2}\rceil}^n \frac{(j_1)^{n-k+h} (j_2)^{2k-n} (j_3)^{n-k}}{k}{k\choose n-k}{k+h\choose n-k+h+1}.\]
\end{theorem}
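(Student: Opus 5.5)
The plan is to reduce the statement to a count of uncolored paths with prescribed step numbers, and then do that count with the Cycle Lemma (Proposition~\ref{prop:cyclelemma}).

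\textbf{Separating the colors.} A type 1 path from $(0,0)$ to $(n+h,h)$ with $a$ steps $N$, $e$ steps $E$ and $b$ steps $S$ satisfies $a-b=h$ and $a+e+b=n+h$. Writing $k=e+b$ gives $b=n-k$, $a=n-k+h$ and $e=2k-n$. The constraint $e\ge 0$ gives $k\ge n/2$. If $e=0$ the path is $N^a S^b$ with $b\ge 1$ (since $n\ge 1$), which contains $NS$. Hence $e\ge 1$, and $k$ ranges from $\lceil\frac{n+1}{2}\rceil$ to $n$. Each uncolored path of this shape carries exactly $(j_1)^{n-k+h}(j_2)^{2k-n}(j_3)^{n-k}$ colorings. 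It therefore suffices to show that the number of uncolored peakless paths from $(0,0)$ to $(n+h,h)$ staying at $y\ge 0$ with these step numbers is
\[\frac{h+1}{k}{k\choose n-k}{k+h\choose n-k+h+1}.\]

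\textbf{Encoding and the Cycle Lemma.} Prepend one extra $N$ to such a path $p$ to get $Np$. By the standard argument, $p$ stays at $y\ge 0$ iff $Np$ is a dominating sequence. Also, $Np$ is still peakless, because $p$ cannot begin with $S$. The word $Np$ has $h+1$ more $N$ steps than $S$ steps.

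Now view words cyclically. The peakless condition says no $N$ is immediately followed by $S$. Cyclically, such a word is determined as follows.
\begin{itemize}
\item Take a cyclic sequence of $k$ ``units'', each equal to $E$ or $S$, with exactly $b=n-k$ of them equal to $S$.
\item Distribute the $a+1=n-k+h+1$ letters $N$ into the $e$ gaps immediately preceding the $E$ units. These are the only places an $N$ may sit, since it cannot precede an $S$.
\end{itemize}
Marking a distinguished unit, the number of such marked configurations is ${k\choose n-k}$ for the choice of units. The distribution of $N$'s into $e$ cyclic gaps, with the cut made just before a run of $N$'s, contributes ${a+1+e-1\choose e-1}={k+h\choose 2k-n-1}={k+h\choose n-k+h+1}$. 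This count matches the binomial in the formula, which supports the encoding.

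The Cycle Lemma then says exactly $h+1$ of the rotations of each word are dominating. Each dominating rotation begins with $N$, hence is $Np$ for a unique valid $p$. Dividing by the $k$ unit positions used for marking produces the factor $\frac{h+1}{k}$.

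\textbf{Expected obstacle and fallback.} The main obstacle is this last bookkeeping step. One must match ``rotations of the full word of length $n+h+1$'' with ``choices of a marked unit among $k$''. One must also handle words with nontrivial rotational symmetry, which is usually done by counting pairs (word, dominating cut) rather than orbits. I would first try to set up an explicit bijection: pairs (linear peakless word starting with a run of $N$'s before an $E$, choice among the $h+1$ dominating cuts) correspond to pairs (valid $p$, choice among $k$ units). If that becomes unwieldy, the fallback is algebraic. By the Corollary, $s_{n+h,h}=(j_1)^h[z^n]s(z)^{h+1}$. One can then apply Lagrange inversion to Proposition~\ref{prop:generatingexpression} after a substitution such as $s=1+u$, expanding the resulting kernel as a double sum in $E$ and $S$ contributions to recover the stated binomials.
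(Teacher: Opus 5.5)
Your proposal is correct and is essentially the paper's argument: prepend an $N$, apply the Cycle Lemma to $Np$ to get $h+1$ dominating cuts, and divide by the number of admissible cut points. The double count over pairs (word, cut position) that you suggest settles the bookkeeping and also handles periodic words, which the paper's claim that each rotation class has $n-k+h+1$ members passes over.

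The only difference is normalization. The paper cuts at one of the $n-k+h+1$ copies of $N$, i.e.\ it counts linear words not starting with $S$, obtains $\frac{h+1}{n-k+h+1}\binom{k-1}{n-k}\binom{k+h}{2k-n}$, and then rewrites it. Your marking of one of the $k$ units $E,S$ gives $\frac{h+1}{k}\binom{k}{n-k}\binom{k+h}{n-k+h+1}$ directly. For this, your marked words must include those beginning with a marked $S$, which your phrase ``starting with a run of $N$'s before an $E$'' omits; alternatively, marking only $E$'s and dividing by $2k-n$ also works.
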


\begin{proof}
    First, let $k$ be the total number of $S$ and $E$ steps. Setting aside the $h$ $N$ steps that do not have a corresponding $S$ step, we have $n$ total steps. Then, every $S$ step has a corresponding $N$ step, and we know that we must have at least one $E$ step, as $n\ge 1$ and we must always be both peakless and at $y\ge 0$. As such, $\frac{n}{2}\le k\le n$. It follows that $\lceil \frac{n+1}{2}\rceil \le k\le n$.

    Now, consider some fixed $k$. We have $k+h$ steps of type $N$ or $E$, so we have $n-k$ steps of type $S$. Then, we must have $n-k+h$ steps of type $N$ for an ending height of $N$, and $2k-n$ steps of type $E$ remain. Furthermore, since words of the form $NS$ are not permitted, in order to construct our path, it is sufficient to order $N$ and $E$ steps, as well as $S$ and $E$ steps. This is because if some $N$ and $S$ have no $E$ steps between them, the $S$ must precede the $N$ to avoid a peak. This gives us ${k+h\choose 2k-n}$ options for the ordering of $N$ and $E$. To order $S$ and $E$, we first place an $E$ at the beginning of the ordering, as our path cannot begin with an $S$ step. This gives us ${k-1\choose n-k}$ options. So, we have ${k+h\choose 2k-n}{k-1\choose n-k}$ options in total.

    Next, we note that we have overcounted, as we are considering paths that go below $y=0$. We now define an equivalence relation on the paths that we enumerated above. First, given some path, add an $N$ step to the beginning. Since no paths begin with $S$, this does not create a peak. Next, we say that the two initial paths are equivalent iff after adding $N$ to the beginning of each, we obtain paths which are circular rotations of one another. Each of these equivalence classes has $n-k+h+1$ members. 
    
    Now, in each equivalence class, after we remove the $N$ step at the beginning, we obtain a path that stays at $y\ge 0$ iff the previous path always stays at $y\ge 1$. Equivalently, every left prefix of the path with an $N$ step added to the beginning must have more $N$ steps than $S$ steps present. By Proposition~\ref{prop:cyclelemma}, as we have $h+1$ more $N$ steps than $S$ steps, each equivalence class has exactly $h+1$ paths that we should be counting. Note that for this purpose we may ignore $E$ steps in each path.
    
    This gives us
    \[\frac{h+1}{n-k+h+1}{k-1\choose n-k}{k+h\choose 2k-n}=\frac{h+1}{k}{k\choose n-k}{k+h\choose n-k+h+1}\]

    options. Finally, we must color each of the steps in the paths that we have constructed. Given this fixed $k$, we have $n-k+h$ steps of type $N$, so there are ${(j_1)}^{n-k+h}$ options for colors of $N$ steps. Likewise, we have factors of ${(j_2)}^{2k-n}$ and ${(j_3)}^{n-k}$. Adding in these additional factors to our expression gives us
    \[\frac{(h+1){(j_1)}^{n-k+h}{(j_2)}^{2k-n}{(j_3)}^{n-k}}{k}{k\choose n-k}{k+h\choose n-k+h+1}\]

    And when we sum across $\lceil \frac{n+1}{2}\rceil\le k\le n$, we obtain the desired expression for $s_{n+h,h}$.
\end{proof}

\begin{remark}
    This generalizes a formula given for A097724 in~\cite{oeis}.
\end{remark}

\subsection{Valleyless Paths}
\begin{definition}
    Let $t_{n,k}$ be the number of NES type 2 paths from $(0,0)$ to $(n,k)$.
\end{definition}

\begin{definition}
    Let $T=(t_{n,k})_{n,k\ge 0}$ be an infinite matrix.
\end{definition}

\begin{proposition}\label{prop:peakvalleybijection}
    The number of NES type 1 paths from $(0,0)$ to $(n+1,k)$ which do not end in $N$ is equal to $j_2*t_{n,k}$.
\end{proposition}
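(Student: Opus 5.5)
The approach is a direct bijection. We read $t_{n,k}$ as counting $(j_1,j_2,j_3)$-colored NES type 2 paths, with the same fixed colors as $s_{n,k}$. We match peakless paths with valleyless paths by regrouping the $N$ and $S$ runs around the $E$ steps and deleting one $E$ step. The deleted $E$ step carries a free choice among $j_2$ colors, which produces the factor $j_2$.

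\textbf{Step 1 (decomposition).} Let $p$ be an NES type 1 path from $(0,0)$ to $(n+1,k)$ that does not end in $N$. Split $p$ at its $E$ steps:
\[p=B_0EB_1E\cdots EB_m,\]
where each $B_i$ is a word in $N$ and $S$ only. Since $NS$ is forbidden, each $B_i$ has the form $S^{a_i}N^{b_i}$. Since $p$ stays at $y\ge 0$, it cannot begin with $S$, so $a_0=0$. Since $p$ does not end in $N$, $b_m=0$. For $n\ge 0$ a short check shows $m\ge 1$: if $m=0$, then $p=S^{a_0}N^{b_0}=N^{b_0}$, which ends in $N$ unless it is empty.

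\textbf{Step 2 (regrouping).} Define
\[q=N^{b_0}S^{a_1}\,E\,N^{b_1}S^{a_2}\,E\cdots E\,N^{b_{m-1}}S^{a_m}.\]
Thus $q$ keeps $m-1$ of the $E$ steps and drops the first one. All $N$ and $S$ steps keep their colors, and the remaining $E$ steps keep theirs in order.

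Each block of $q$ between consecutive $E$ steps has the form $N^bS^a$, so $q$ never contains $SN$. The path $q$ has length $n$, and its final height is $k$.

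\textbf{Step 3 (height condition).} Let $h_i$ be the height of $p$ after $B_0E\cdots B_{i-1}ES^{a_i}$. By construction, $h_i$ is also the height of $q$ at the end of its $i$-th block $N^{b_{i-1}}S^{a_i}$.

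In $p$, the lowest point reached between consecutive $E$ steps is the point after the run $S^{a_i}$, namely $h_i$. In $q$, the lowest point of the block $N^{b_{i-1}}S^{a_i}$ is its endpoint, which is again $h_i$. The remaining points in either path lie on monotone runs between these values. Therefore $p$ stays at $y\ge 0$ if and only if every $h_i\ge 0$, if and only if $q$ stays at $y\ge 0$. Checking this equality of minima carefully, together with the boundary cases $m=1$ and empty runs, is the main obstacle.

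\textbf{Step 4 (inverse and colors).} Conversely, given an NES type 2 path $q$, decompose it as $C_1EC_2E\cdots EC_m$ with $C_i=N^{b_{i-1}}S^{a_i}$. Then rebuild
\[p=N^{b_0}\,E\,S^{a_1}N^{b_1}\,E\cdots E\,S^{a_m}\]
by inserting one new $E$ step. This map inverts Step 2 on the underlying uncolored paths, so we get a bijection between those and the type 2 paths from $(0,0)$ to $(n,k)$.

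For colors, every color is preserved except that of the deleted $E$ step, which ranges freely over $j_2$ choices. Hence the colored count of the first class is $j_2\cdot t_{n,k}$, as claimed.
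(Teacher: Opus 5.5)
Your proof is correct and uses the same bijection as the paper. The paper prepends an $E$ of a designated color to the valleyless path and slides the $E$ preceding each ascent to the end of that ascent; this is exactly your Step 4 map, with the designated $E$ becoming the first $E$ of the peakless path. Your Step 3 makes explicit the $y\ge 0$ check that the paper leaves as ``easy to see.'' One small correction there: the lowest point of a block $N^{b_{i-1}}S^{a_i}$ of $q$ is $\min(h_{i-1},h_i)$ (with $h_0=0$), not always its endpoint. This still gives both paths the same minimum $\min(0,h_1,\dots,h_m)$, so the argument goes through.
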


\begin{proof}
    First, consider $E$ with some color. Then, take an NES type 2 path from $(0,0)$ to $(n,k)$ and add $E$ with this designated color to the beginning. Then, every ascent must be preceded by an $E$ step, since $SN$ is a forbidden pattern. As such, take the $E$ step preceding each ascent and step it to the end of that ascent. Then, all ascents are followed by an $E$ step, which gives us an NES type 1 path not ending in $N$ from $(0,0)$ to $(n+1,k)$. Furthermore, the first $E$ step in this NES type 1 path has our designated color. It is easy to see that by swapping ascents with $E$ steps following them and then removing the $E$ step at the beginning of the path, we create a bijection between NES type 2 paths from $(0,0)$ to $(n,k)$ and NES type 1 paths from $(0,0)$ to $(n+1,k)$ not ending in $N$ and with a first $E$ step of our designated color.

    Now, we have $j_2$ options for such $E$ steps, so if we count all NES type 1 paths from $(0,0) $to $(n+1,k)$ not ending in $N$, this is equal to $j_2*t_{n,k}$.
\end{proof}

\begin{remark}
    This generalizes Theorem 3.1 in~\cite{asamoahearlypaper}.
\end{remark}

\begin{proposition}
    $j_2*t_{n,k}=s_{n+1,k}-j_1*s_{n,k-1}$
\end{proposition}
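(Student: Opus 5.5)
We prove the identity by partitioning the NES type 1 paths from $(0,0)$ to $(n+1,k)$ according to whether or not the last step is $N$. Proposition~\ref{prop:peakvalleybijection} counts one of these two classes directly, and Proposition~\ref{prop:spathends} counts the other.

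\emph{Paths not ending in $N$.} By Proposition~\ref{prop:peakvalleybijection}, the number of NES type 1 paths from $(0,0)$ to $(n+1,k)$ whose last step is not $N$ is exactly $j_2\, t_{n,k}$.

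\emph{Paths ending in $N$, case $k\ge 1$.} Fix one of the $j_1$ colors for $N$ and take $w=N$ in Proposition~\ref{prop:spathends}, so $(m,j)=(1,1)$. We check the hypotheses. First, $w$ is an NES type 3 path and does not begin with $S$. Second, $w$ stays at height $y\ge 0\ge 1-k$. The proposition then says that type 1 paths from $(0,0)$ to $(n+1,k)$ ending in this colored $N$ are in bijection with type 1 paths from $(0,0)$ to $(n,k-1)$. Summing over the $j_1$ colors, there are $j_1 s_{n,k-1}$ such paths. This is the same argument used in the proof of Theorem~\ref{thm:srecurrence}(ii).

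\emph{Paths ending in $N$, case $k=0$.} No type 1 path can end at height $0$ with an $N$ step, since it would have been at height $-1$ just before. So this class is empty. This agrees with the formula under the convention $s_{n,-1}=0$, which is the convention already used for the Riordan matrix $S$.

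Adding the two classes gives $s_{n+1,k}=j_2\,t_{n,k}+j_1 s_{n,k-1}$, and rearranging yields the claimed identity.

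There is no real obstacle. The only points needing care are checking the height condition of Proposition~\ref{prop:spathends} for $w=N$ and stating the $k=0$ boundary convention explicitly.
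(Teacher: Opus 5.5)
Your proof is correct and follows the same route as the paper: split the type 1 paths to $(n+1,k)$ by whether the last step is $N$, count the ones not ending in $N$ via Proposition~\ref{prop:peakvalleybijection}, and count the ones ending in $N$ via Proposition~\ref{prop:spathends}. The paper leaves two points implicit that you make explicit: the check that the height hypothesis of Proposition~\ref{prop:spathends} holds for $w=N$ exactly when $k\ge 1$, and the separate treatment of $k=0$ with the convention $s_{n,-1}=0$.
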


\begin{proof}
    By Proposition~\ref{prop:peakvalleybijection}, we know that $j_2*t_{n,k}$ enumerates NES type 1 paths from $(0,0)$ to $(n+1,k)$ not ending in $N$. By Proposition~\ref{prop:spathends}, $j_1*s_{n,k-1}$ enumerates NES type 1 paths from $(0,0)$ to $(n+1,k)$ ending in $N$, and $s_{n+1,k}$ enumerates all such NES type 1 paths by definition. As such, NES type 1 paths from $(0,0)$ to $(n+1,k)$ not ending in $N$ are enumerated by both expressions, so the two expressions must be equal.
\end{proof}

\begin{corollary}
    $T$ is the Riordan array $(\frac{s(z)-1}{j_2*z},j_1*zs(z))$.
\end{corollary}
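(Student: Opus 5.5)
The plan is to translate the identity $j_2\,t_{n,k}=s_{n+1,k}-j_1 s_{n,k-1}$ from the preceding proposition into generating functions column by column. Since $S=(s(z),j_1zs(z))$, column $k$ of $S$ has generating function $\sum_n s_{n,k}z^n = s(z)\,(j_1 z s(z))^k$. Write $f(z)=j_1zs(z)$ and let $T_k(z)=\sum_{n\ge 0} t_{n,k}z^n$. The goal is to show that $T_k(z)=\frac{s(z)-1}{j_2 z}f(z)^k$ for every $k\ge 0$, which is exactly the claim that $T$ is the Riordan array $(\frac{s(z)-1}{j_2 z}, j_1 z s(z))$.

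For $k\ge 1$, I would multiply the identity by $z^{n+1}$ and sum over $n\ge 0$.

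The left side becomes $j_2 z T_k(z)$.

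The term $\sum_{n\ge0}s_{n+1,k}z^{n+1}$ equals $s(z)f^k-s_{0,k}$, and $s_{0,k}=0$ because $k\ge1$.

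The term $\sum_{n\ge 0} j_1 s_{n,k-1}z^{n+1}$ equals $j_1 z\,s(z)f^{k-1}=f^k$.

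Combining these gives
\[j_2 z T_k(z) = s(z) f^k - f^k = (s(z)-1)f^k,\]
which is the desired column formula.

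The case $k=0$ needs separate attention, because $s_{n,-1}$ is not defined by the matrix. I would check directly that Proposition~\ref{prop:peakvalleybijection} gives $j_2 t_{n,0}=s_{n+1,0}$. This holds because a type 1 path ending at height $0$ cannot end in $N$: the step before it would sit at $y=-1$. So the subtracted term vanishes, consistent with the convention $s_{n,-1}=0$. Summing gives $j_2 z T_0(z)=s(z)-s_{0,0}=s(z)-1$, since $s_{0,0}=1$ (the empty path). This is the $k=0$ case of the same formula.

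I expect the main obstacle to be bookkeeping rather than any real difficulty. The two things to get right are the constant terms $s_{0,k}$ (equal to $1$ only when $k=0$) and the justification of the $k=0$ case, where the identity must be read with $s_{n,-1}=0$. Finally, $\frac{s(z)-1}{j_2 z}$ is a genuine power series, since $s(0)=1$, and its constant term $s_{1,0}/j_2=1$ is nonzero. Together with $f$ having order $1$, this confirms that $T$ is a proper Riordan array.
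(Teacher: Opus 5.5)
Your proposal is correct and is the argument the paper leaves implicit. The paper states this corollary without proof, as an immediate consequence of $j_2 t_{n,k}=s_{n+1,k}-j_1 s_{n,k-1}$, and your column-by-column generating-function translation, with $s_{0,k}=0$ for $k\ge 1$ and the $k=0$ case read with $s_{n,-1}=0$, is the intended derivation. Your treatment of the $k=0$ case and the properness check fill in details the paper omits, and both are handled correctly.
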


\begin{proposition}
    $j_2*t_{n,k}-j_2*s_{n,k}=j_3*t_{n-1,k+1}$.
\end{proposition}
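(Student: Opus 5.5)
The plan is to work entirely algebraically from the previous proposition and Theorem~\ref{thm:srecurrence}, without a new bijection. The key intermediate identity I aim for is
\[
t_{n,k}=\sum_{i\ge 0} (j_3)^i\, s_{n-i,k+i},
\]
valid for all $n,k\ge 0$. From it the statement follows by splitting off one term and reindexing.

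First, apply Theorem~\ref{thm:srecurrence} at $(n+1,k)$. For $k\ge 1$, part (ii) gives
\[
s_{n+1,k}=j_1 s_{n,k-1}+j_2\sum_{i\ge 0}(j_3)^i s_{n-i,k+i}.
\]
For $k=0$, part (i) gives the same identity, with the $j_1 s_{n,-1}$ term read as $0$. (There the index $k$ in the sum should be read as $0$.) Substituting this into the previous proposition, $j_2 t_{n,k}=s_{n+1,k}-j_1 s_{n,k-1}$, the $j_1$ terms cancel. This leaves
\[
j_2 t_{n,k}=j_2\sum_{i\ge 0}(j_3)^i s_{n-i,k+i}.
\]
Dividing by $j_2\ge 1$ gives the intermediate identity. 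The sums are finite, since $s_{m,\ell}=0$ for $m<0$ or $\ell>m$ by lower-triangularity.

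Next, for $n\ge 1$, separate the $i=0$ term and shift $i\mapsto i+1$ in the rest:
\[
t_{n,k}-s_{n,k}=\sum_{i\ge 1}(j_3)^i s_{n-i,k+i}=j_3\sum_{i\ge 0}(j_3)^i s_{(n-1)-i,(k+1)+i}.
\]
The last sum is exactly the intermediate identity evaluated at $(n-1,k+1)$, so it equals $t_{n-1,k+1}$. Multiplying through by $j_2$ then gives the claimed equality.

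The main point of care is the boundary behaviour rather than any real obstacle. The $k=0$ case must use recurrence (i) and the convention $s_{n,-1}=0$. The case $n=0$ needs the convention $t_{-1,k+1}=0$, and there both sides vanish since $t_{0,k}=s_{0,k}$. A combinatorial check of the intermediate identity is also available as a sanity check: in the bijection of Proposition~\ref{prop:peakvalleybijection}, a type 1 path not ending in $N$ ends in $ES^i$, and Proposition~\ref{prop:spathends} counts these. I would mention this but not rely on it.
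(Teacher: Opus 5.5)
Your derivation is correct up to the last sentence. The identity $t_{n,k}=\sum_{i\ge 0}(j_3)^i s_{n-i,k+i}$ does follow from Theorem~\ref{thm:srecurrence} and the preceding proposition as you argue. Splitting off $i=0$ then gives $t_{n,k}-s_{n,k}=j_3\,t_{n-1,k+1}$ for $n\ge 1$. The step that fails is ``multiplying through by $j_2$'': that gives $j_2t_{n,k}-j_2s_{n,k}=j_2j_3\,t_{n-1,k+1}$, not $j_3\,t_{n-1,k+1}$. This cannot be repaired, because the statement as printed is false whenever $j_2\ge 2$. Take $n=2$, $k=0$. The type 2 paths to $(2,0)$ are $EE$ and $NS$, so $t_{2,0}=j_2^2+j_1j_3$. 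The only type 1 path is $EE$, so $s_{2,0}=j_2^2$. Also $t_{1,1}=j_1$. The left side is then $j_1j_2j_3$, while the right side is $j_1j_3$.

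The paper's combinatorial proof makes the matching slip. It splits type 1 paths to $(n+1,k)$ not ending in $N$ by their last step ($E$ versus $S$), which is sound. It then says there are $t_{n-1,k+1}$ type 1 paths to $(n,k+1)$ not ending in $N$, whereas Proposition~\ref{prop:peakvalleybijection} gives $j_2\,t_{n-1,k+1}$. Done correctly, both routes give $t_{n,k}-s_{n,k}=j_3\,t_{n-1,k+1}$. This corrected form is also what the subsequent corollary actually uses: the generating function it assigns to $j_3t_{n-1,1}$, namely $\frac{s(z)-1}{z}\,j_1j_3z^2s(z)$, is really that of $j_2j_3t_{n-1,1}$.

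So state the identity without the $j_2$ on the left, or with $j_2j_3$ on the right. With that change your argument is a valid alternative to the paper's, and it is purely algebraic. As a bonus, your intermediate identity gives the closed form $T=\bigl(\frac{s(z)}{1-j_1j_3z^2s(z)},\,j_1zs(z)\bigr)$. This agrees with the paper's $\bigl(\frac{s(z)-1}{j_2z},\,j_1zs(z)\bigr)$ by the functional equation $s=1+\frac{j_2zs}{1-j_1j_3z^2s}$. The paper's route instead explains the identity bijectively, through the last-step decomposition.
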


\begin{proof}
     By Proposition~\ref{prop:peakvalleybijection}, we know that $j_2*t_{n,k}$ enumerates NES type 1 paths from $(0,0)$ to $(n+1,k)$ not ending in $N$. By Proposition~\ref{prop:spathends}, $j_2*s_{n,k}$ enumerates NES type 1 paths from $(0,0)$ to $(n+1,k)$ ending in $E$. So, $j_2*t_{n,k}-j_2*s_{n,k}$ enumerates NES type 1 paths from $(0,0)$ to $(n+1,k)$ ending in $S$. NES type 1 paths from $(0,0)$ to $(n+1,k)$ ending in $S$ uniquely correspond to NES type 1 paths from $(0,0)$ to $(n,k+1)$ not ending in $N$ along with a provided color for an $S$ step to add at the end. There are $j_3$ options for colors for $S$ and $t_{n-1,k+1}$ NES type 1 paths from $(0,0)$ to $(n,k+1)$ not ending in $N$ by Proposition~\ref{prop:peakvalleybijection}. As such, we have $j_3*t_{n-1,k+1}$ options. Since these two expressions enumerate the same class of objects, they must be equal.
\end{proof}

\begin{remark}
    This means that the difference between valleyless and peakless paths given any options for colors always forms a Riordan array.
\end{remark}

\begin{corollary}
    $j_1 j_3 * z^2 s^2(z)-j_1 j_3*z^2 s(z)+j_2 *zs(z)-s(z)+1=0$.
\end{corollary}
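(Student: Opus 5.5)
The plan is to derive this functional equation a second time, now from the two relations between $T$ and $S$ just proved. This is presumably why the statement is placed as a corollary here, even though it coincides with Proposition~\ref{prop:generatingexpression}. Write $t(z)=\frac{s(z)-1}{j_2 z}$ for the first-column generating function of $T$, as given by the preceding corollary. Then take the relation $j_2 t_{n,k}-j_2 s_{n,k}=j_3 t_{n-1,k+1}$ at $k=0$, namely $j_2 t_{n,0}-j_2 s_{n,0}=j_3 t_{n-1,1}$, valid for $n\ge 1$, and translate it into generating functions.

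First, column $1$ of $T=(t(z),j_1 z s(z))$ has generating function $t(z)\cdot j_1 z s(z)$. Multiplying the $k=0$ relation by $z^n$ and summing over $n\ge 1$ gives
\[j_2\bigl(t(z)-t_0\bigr)-j_2\bigl(s(z)-s_0\bigr)=j_3 z\cdot j_1 z s(z) t(z).\]
I expect $t_0=t_{0,0}=1$ and $s_0=1$. These constants cancel, and the $n=0$ case also holds trivially, since $t_{-1,1}=0$. Checking these boundary terms is the only step where care is needed.

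Next, substitute $t=\frac{s-1}{j_2 z}$:
\[\frac{s-1}{z}-j_2 s=\frac{j_1 j_3 z s(s-1)}{j_2}.\]
Multiplying through by $j_2 z$ should give
\[j_2(s-1)-j_2^2 z s=j_1 j_3 z^2 s^2-j_1 j_3 z^2 s.\]
This does not obviously match the target, so I would recheck the normalization. Proposition~\ref{prop:peakvalleybijection} counts paths to $(n+1,k)$ with $j_2 t_{n,k}$. So $j_2\sum_n t_{n,0}z^{n+1}=s(z)-1$ when $k=0$, because nothing of positive length ends in $N$ at height $0$. This gives $t(z)=\frac{s-1}{j_2 z}$, as before.

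If the coefficient bookkeeping fails to reproduce $j_2 z s$ in place of $j_2^2 z s$, I would fall back on a different route. The relation $j_2 t_{n,k}=s_{n+1,k}-j_1 s_{n,k-1}$ at $k=1$, combined with the $k=0$ relation and the first-column recurrence of Theorem~\ref{thm:srecurrence}, eliminates $t$ and yields $s-1=zs\bigl(j_2+j_1 j_3 z(s-1)\bigr)$. Expanding this gives exactly
\[j_1 j_3 z^2 s^2-j_1 j_3 z^2 s+j_2 z s-s+1=0.\]
The main obstacle is getting the index shifts and the $j_2$ factors consistent. I would test the candidate identity on the first few terms, $s_{1,0}=j_2$ and $s_{2,0}=j_2^2$, to choose between the two derivations.
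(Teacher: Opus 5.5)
Your generating-function bookkeeping in the main route is correct, and the mismatch you hit is real: the relation $j_2 t_{n,k}-j_2 s_{n,k}=j_3 t_{n-1,k+1}$ as printed is missing a factor $j_2$ on the right. In its proof, the NES type 1 paths to $(n,k+1)$ not ending in $N$ are counted by $j_2 t_{n-1,k+1}$ (Proposition~\ref{prop:peakvalleybijection}), not by $t_{n-1,k+1}$. So the true statement is $t_{n,k}-s_{n,k}=j_3 t_{n-1,k+1}$. For instance, with $j_1=j_3=1$, $j_2=2$, $n=2$, $k=0$ one has $j_2t_{2,0}-j_2s_{2,0}=10-8=2$, but $j_3t_{1,1}=1$. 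The paper's proof of this corollary is exactly your main route. It reaches the right identity only because it assigns to $j_3t_{n-1,1}$ the series $\frac{s(z)-1}{z}\,j_1j_3z^2s(z)$, which is really the generating function of $j_2j_3t_{n-1,1}$, so the two slips cancel. What your proposal lacks is this diagnosis. With the corrected relation, your own computation gives $\frac{s-1}{z}-j_2s=j_1j_3zs(s-1)$, and multiplying by $z$ gives the target. Rechecking $t(z)=\frac{s-1}{j_2z}$ cannot resolve anything, since that normalization was never the problem.

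The fallback does not close the gap. Eliminating $t$ cannot change what the printed relation says: substituting $j_2t_{n-1,1}=s_{n,1}-j_1s_{n-1,0}$ into it again gives $s-1=j_2zs+\frac{j_1j_3}{j_2}z^2s(s-1)$. The equation you announce, $s-1=zs\bigl(j_2+j_1j_3z(s-1)\bigr)$, is the generating-function form of $s_{n+1,0}=j_2s_{n,0}+j_3(s_{n,1}-j_1s_{n-1,0})$, obtained by splitting on a final $E$ or $S$. That is the corrected relation in disguise, so as written your fallback asserts the result rather than deriving it. The first-column recurrence of Theorem~\ref{thm:srecurrence}, together with $S=(s,j_1zs)$, does give $s-1=\frac{j_2zs}{1-j_1j_3z^2s}$, which is the target. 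But that is essentially the proof of Proposition~\ref{prop:generatingexpression}, with $T$ playing no role, and combining it with the printed relation yields a contradiction whenever $j_2\neq1$. Finally, your proposed sanity check cannot separate the two candidates. They differ only in the coefficient of $z^2s(s-1)=j_2z^3+\cdots$, so both give $s_{1,0}=j_2$ and $s_{2,0}=j_2^2$. The first value that distinguishes them is $s_{3,0}=j_2^3+j_1j_2j_3$ (from $EEE$ and $NES$). The target predicts this value, while your first equation predicts $j_2^3+j_1j_3$.
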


\begin{proof}
    Taking $k=0$, $t_{n,0}-j_2*s_{n,0}$ has a generating function of $\frac{s(z)-1}{z}-j_2*s(z)$. Meanwhile, $j_3*t_{n-1,1}$ has a generating function of $\frac{s(z)-1}{z}*j_1*j_3*z^2 s(z)$. Setting these equal, we get the identity
    \[s(z)-1-j_2*zs(z)=j_1*j_3*z^2 s^2(z)-j_1*j_3*z^2 s(z)\]

    And this results in
    \[j_1 j_3 * z^2 s^2(z)-j_1j_3*z^2 s(z)+ j_2*zs(z)-s(z)+1=0\]

    as desired.
\end{proof}

\begin{remark}
    This gives an alternate proof and a combinatorial interpretation for the result in Proposition~\ref{prop:generatingexpression}.
\end{remark}

\subsection{Grand Paths}\label{sec:grand}
\begin{definition}
    Let $g_{n,k}$ be the number of NES type 3 paths from $(0,0)$ to $(n,k)$.
\end{definition}

\begin{definition}
    Let $G=(g_{n,k})_{n,k\ge 0}$ be an infinite matrix.
\end{definition}

\begin{proposition}
    For $n,h\ge 0$, 
    \[g_{n+h,h}=\sum_{k=\lceil \frac{n}{2}\rceil}^{n} (j_1)^{n-k+h} (j_2)^{2k-n} (j_3)^{n-k} {k+h\choose 2k-n}{k\choose 2k-n}.\]
\end{proposition}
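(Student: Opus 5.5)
We count NES type 3 paths from $(0,0)$ to $(n+h,h)$ directly, following the counting in the previous theorem but without the cycle-lemma correction. Grand paths may go below $y=0$, so no overcounting occurs and no equivalence-class argument is needed.

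Let $k$ be the total number of $E$ and $S$ steps. Since the path has length $n+h$ and ends at height $h$, it has $n-k+h$ steps $N$, $n-k$ steps $S$, and $2k-n$ steps $E$. This forces $2k-n\ge 0$ and $n-k\ge 0$, so $\lceil \frac{n}{2}\rceil\le k\le n$. Unlike the peakless case, $k=n/2$ with no $E$ steps is allowed. For instance, $n=2$, $h=0$, $k=1$ gives the path $SN$, which is a valid type 3 path.

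The key step is the encoding already used in the previous theorem. Since $NS$ is forbidden, a word in $N,E,S$ with no factor $NS$ is determined by two data:
\begin{itemize}
\item[(a)] the relative order of the $N$ and $E$ steps, a word of length $k+h$ with $2k-n$ letters $E$;
\item[(b)] the relative order of the $S$ and $E$ steps, a word of length $k$ with $2k-n$ letters $E$.
\end{itemize}
We reconstruct the path as follows. The $E$ steps cut both words into $2k-n+1$ blocks. Between consecutive $E$'s (and before the first $E$ and after the last $E$), the block of $N$'s and the block of $S$'s must be merged with all $S$'s preceding all $N$'s. This merge is the unique arrangement avoiding $NS$.

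Conversely, every pair of words gives a valid path: inside each block we have $S^aN^b$, and across block boundaries an $E$ separates the steps. So no $NS$ factor is ever created. The two maps are mutually inverse, giving ${k+h\choose 2k-n}{k\choose 2k-n}$ uncolored paths for fixed $k$. In the previous theorem an $E$ was forced at the start, to prevent a leading $S$. Here no such constraint applies, which is why the factor is ${k\choose 2k-n}$ rather than ${k-1\choose n-k}$.

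Finally, we color the steps independently, which contributes $(j_1)^{n-k+h}(j_2)^{2k-n}(j_3)^{n-k}$. Summing over $k$ gives the stated formula.

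The main obstacle is justifying carefully that the block decomposition is a bijection. In particular we must check the edge blocks before the first $E$ and after the last $E$, and the case $2k-n=0$, where there is a single block $S^{n-k}N^{n-k+h}$. This case is consistent with ${k+h\choose 0}{k\choose 0}=1$.
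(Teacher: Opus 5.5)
Your proof is correct and takes the same approach as the paper: fix $k$, note that a word avoiding $NS$ is determined by its $N/E$ and $S/E$ interleavings, count ${k+h\choose 2k-n}{k\choose 2k-n}$, multiply by the coloring factor, and sum over $\lceil n/2\rceil\le k\le n$. Your explicit block form $S^{a_0}N^{b_0}\,E\,S^{a_1}N^{b_1}\,E\cdots E\,S^{a_m}N^{b_m}$ with $m=2k-n$ justifies the bijection that the paper only asserts, including the edge case $m=0$.
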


\begin{proof}
    Enumerate the total number of $N$ and $E$ steps by $k+h$. We know that we have $h$ steps of type $N$ which do not correspond to an $S$ step. The remaining steps of type $N$, along with the steps of type $E$, are enumerated by $k$. These $N$ steps each correspond to $S$ steps, which are counted by $n-k$. As such, we find that $n-k\le k$, meaning that $2k\ge n$, and so $k\ge \lceil \frac{n}{2}\rceil$. Furthermore, $k+h\le n+h$, so $k\le n$.

    Now, consider some fixed $k$. We then have $n-k+h$ $N$ steps, $2k-n$ $E$ steps, and $n-k$ $S$ steps. To fully specify any given path, since we have no peaks, it is sufficient to specify only the orderings of $N$ and $E$ steps, as well as of $S$ and $E$ steps. To order $N$ and $E$ steps, we obtain ${k+h\choose 2k-n}$ options. We obtain ${k\choose 2k-n}$ options for orderings of $S$ and $E$ steps.

    Finally, we describe colorings of our steps, given a particular ordering of steps, with a factor of
    \[(j_1)^{n-k+h} (j_2)^{2k-n} (j_3)^{n-k}\]

    based on the number of each type of step. So, for each $k$, we obtain
    \[(j_1)^{n-k+h} (j_2)^{2k-n} (j_3)^{n-k}{k+h\choose 2k-n}{k\choose 2k-n}\]

    and we obtain the desired formula by taking a sum over $\lceil\frac{n}{2}\rceil\le k\le n$.
\end{proof}

\begin{theorem}\label{thm:grecurrence}
    For $n\ge 1$ and $k\ge 0$, $g_{n,k}=j_1*g_{n-1,k-1}+j_2\sum_{i=0}^{n-1} (j_3)^i * g_{n-1-i,k+i}$
\end{theorem}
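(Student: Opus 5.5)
The plan is to decompose an NES type 3 path counted by $g_{n,k}$ according to its final maximal block of steps, exactly as in the proof of Theorem~\ref{thm:srecurrence}, using Proposition~\ref{prop:gpathends} in place of Proposition~\ref{prop:spathends}. Since $n\ge 1$, the path is nonempty. Because the pattern $NS$ is forbidden, any maximal terminal run of $S$ steps must be preceded by an $E$ step or must begin the path. So every such path ends either in a single $N$ step, or in a block $ES^i$ for a unique $i\ge 0$, or consists entirely of $S$ steps.

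First, I would handle paths ending in $N$. The one-step path $N$ is an NES type 3 path from $(0,0)$ to $(1,1)$ not beginning with $S$. By Proposition~\ref{prop:gpathends}, for each fixed color of $N$ these paths are in bijection with NES type 3 paths from $(0,0)$ to $(n-1,k-1)$. This contributes $j_1 g_{n-1,k-1}$; when $k=0$ we interpret $g_{n-1,-1}$ as the number of grand paths to height $-1$.

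Second, I would handle paths ending in $ES^i$. For each $i$ with $0\le i\le n-1$, the block $ES^i$ is a type 3 path from $(0,0)$ to $(i+1,-i)$ that begins with $E$. Proposition~\ref{prop:gpathends} therefore gives a bijection with type 3 paths to $(n-1-i,k+i)$, and there are $j_2 j_3^i$ colorings of the block. Summing over $i$ gives $j_2\sum_{i=0}^{n-1} j_3^i g_{n-1-i,k+i}$. The upper limit $i\le n-1$ holds automatically because the block has length $i+1\le n$.

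The main obstacle is the boundary cases, because grand paths, unlike type 1 paths, can go below $0$. I see two issues:
\begin{enumerate}
\item[(a)] The all-$S$ path $S^n$ ends at height $-n$, so it never contributes when $k\ge 0$.
\item[(b)] For $k=0$, the term $g_{n-1,-1}$ refers to paths ending at negative height, which lie outside the matrix $G=(g_{n,k})_{n,k\ge 0}$.
\end{enumerate}
For (b), I would state that the recurrence uses $g_{n,k}$ for all integers $k$, with $g_{n,k}$ always meaning the number of type 3 paths to $(n,k)$. Under that reading the $k=0$ case follows by the same argument, since Proposition~\ref{prop:gpathends} places no sign restriction on $k-j$. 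If instead $g_{n-1,-1}$ is meant to be $0$, the statement fails at $k=0$; for example, $g_{1,0}=j_2$ while the path $N$ alone contributes nothing to height $0$. I would make the intended convention explicit. As a cross-check, Proposition~\ref{prop:dualbijection} identifies $g_{n-1,-1}$ with a $(j_3,j_2,j_1)$-colored count at height $1$.

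Finally, I would observe that these cases are disjoint and exhaustive, which completes the proof.
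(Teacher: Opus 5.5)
Your argument is correct and is essentially the paper's proof: you split according to whether the path ends in $N$ or in $ES^i$ (excluding the all-$S$ path since $k\ge 0$), apply Proposition~\ref{prop:gpathends}, and count colorings, reading $g_{n-1,-1}$ as the number of peakless grand paths to height $-1$, which is exactly how the paper uses it when proving that $G$ is Riordan. One small correction to your side remark: at $n=1$, $k=0$ the two conventions agree because $g_{0,-1}=0$, so the first real discrepancy is at $n=2$, $k=0$, where the paths $SN$ give $g_{2,0}=j_2^2+j_1j_3$ and the zero convention drops the term $j_1 g_{1,-1}=j_1j_3$.
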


\begin{proof}
    Consider an arbitrary NES type 3 path $p$ from $(0,0)$ to $(n,k)$. We may consider cases where $p$ ends in $N$, $E$ or $S$. Now, if $p$ ends in $S$, since $k\ge 0$, it is impossible that $p$ is entirely composed of $S$ steps. Since $p$ may not contain the pattern $NS$, if $p$ ends in $S$, then $p$ must end in $ES^j$ for $j\ge 1$. In this way, $p$ either ends in $N$ or $ES^j$ for some $j\ge 0$.

    By Proposition~\ref{prop:gpathends}, if $w=N$ for some fixed color, then there are $g_{n-1,k-1}$ NES type 3 paths from $(0,0)$ to $(n,k)$ ending in $w$. There are $j_1$ colors for $N$, so this gives us $j_1*g_{n-1,k-1}$ paths ending in $N$. Likewise, for any $i\ge 0$, $w=ES^i$ has $j_2*j_3^i$ options for colors, so again by Proposition~\ref{prop:gpathends}, for each $i$, there exist $j_2*j_3^i*g_{n-1-i,k+i}$ NES type 3 paths from $(0,0)$ to $(n,k)$ ending in $ES^i$. In total, we find that there are 
    \[g_{n,k}=j_1*g_{n-1,k-1}+j_2\sum_{i=0}^{n-1} {(j_3)}^i g_{n-1-i,k+i}\]

    NES type 3 paths from $(0,0)$ to $(n,k)$ as desired.
\end{proof}

\begin{theorem}
    $G$ is the Riordan matrix $(\frac{s(z)}{1-j_1 j_3 * z^2 s^2(z)},j_1*zs(z))$.
\end{theorem}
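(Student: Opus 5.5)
The plan is to show that $G$ is a proper Riordan array with the same $B$-matrix as $S$, which fixes the multiplier. The first column is then computed separately by a combinatorial decomposition.

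\textbf{Step 1: the multiplier.} First, $G$ is lower triangular, since a path of length $n$ cannot reach height $k>n$, and $g_{0,0}=1$. Theorem~\ref{thm:grecurrence}, applied at $(n+1,k+1)$ with $n,k\ge 0$, gives
\[g_{n+1,k+1}=j_1 g_{n,k}+j_2\sum_{i\ge 0}(j_3)^i g_{n-i,k+1+i}.\]
This involves only entries with nonnegative column index, so $G$ has the $B$-matrix $(j_1,\frac{j_2}{1-j_3z})$. This is exactly the $B$-matrix of $S$ from Theorem~\ref{thm:srecurrence}. By Theorem~\ref{thm:amatrixcharacterization}, $G$ is a proper Riordan array $(g(z),f(z))$. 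By Theorem~\ref{thm:uniquemultiplier} (equivalently, Proposition~\ref{prop:bmultformula}), $f$ agrees with the multiplier of $S$, so $f=j_1zs(z)$.

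It remains to show $g(z)=\sum_n g_{n,0}z^n=\frac{s(z)}{1-j_1j_3z^2s^2(z)}$. The recurrence cannot be used for column $0$, because it refers to $g_{n-1,-1}$, i.e.\ to negative heights.

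\textbf{Step 2: decomposing a path ending at height $0$.} Take a type 3 path $p$ to $(n,0)$ and let $-m\le 0$ be its minimum height. Split $p=qr$ at the \emph{first} time it reaches height $-m$.
\begin{itemize}
\item The suffix $r$, read from height $-m$, stays weakly above its start and ends $m$ higher. So $r$ is an NES type 1 path to height $m$, counted by the column generating function $s(z)(j_1zs(z))^m$ of $S$.
\item If $m=0$, then $q$ is empty. If $m\ge1$, then $q$ ends with $S$ and stays strictly above $-m$ before its last step.
\item Gluing is free: $q$ ends in $S$, so the junction can only produce $SN$, $SE$ or $SS$, never the forbidden $NS$. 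Conversely, every such pair $(q,r)$ gives a valid path.
\end{itemize}

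\textbf{Step 3: counting the prefixes $q$.} To count the $q$'s with $m\ge1$, apply the dual map as in Proposition~\ref{prop:dualbijection}. Then $q^*$ is a $(j_3,j_2,j_1)$-colored peakless path ending at height $m$ that starts with $N$ and afterwards stays at height $\ge 1$. Removing the first $N$ (which has $j_3$ colors) leaves a $(j_3,j_2,j_1)$-colored type 1 path to height $m-1$. No peak is created by reattaching the $N$, since a type 1 path cannot begin with $S$.

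By the remark after Proposition~\ref{prop:generatingexpression}, $s(z)$ depends only on $j_1j_3$ and $j_2$. So the color-swapped array is $(s(z),j_3zs(z))$, and the generating function for these prefixes is $j_3z\cdot s(z)(j_3zs(z))^{m-1}=(j_3zs(z))^m$.

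\textbf{Step 4: summing.} Summing over $m$ gives
\[g(z)=\sum_{m\ge0}(j_3zs)^m\, s\,(j_1zs)^m=\frac{s(z)}{1-j_1j_3z^2s^2(z)},\]
which completes the identification of $G$.

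The main obstacle is Step 3: verifying carefully that duality sends the first-passage prefixes bijectively onto "$N$ followed by a type 1 path" with the stated color swap, and that no peak is created or destroyed at either junction. If this becomes messy, the fallback is to compute $g_{n,-1}$ via Proposition~\ref{prop:dualbijection} and derive a functional equation for $g$ from Theorem~\ref{thm:grecurrence} at $k=0$.
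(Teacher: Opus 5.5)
Your proof is correct. Step 1 matches the paper in substance, but your computation of the first column takes a genuinely different route.

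\textbf{Multiplier.} The paper applies Theorems~\ref{thm:amatrixcharacterization} and~\ref{thm:uniquemultiplier} to the enlarged array $(g_{n,k})_{n,k\ge -1}$, which carries the artificial entry $g_{-1,-1}=1/j_1$. You apply them to $G$ directly; that is equivalent.

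\textbf{First column.} Here the paper follows what you call your fallback. It uses Propositions~\ref{prop:dualbijection} and~\ref{prop:sdualbijection} to identify the generating function of column $-1$ as $\frac{1}{j_1}+zg(z)\,j_3zs(z)$. It then imposes the Riordan relation (column $1$) $=$ (column $-1$)$\cdot(j_1zs)^2$ and solves the resulting linear equation for $g$. No decomposition of paths is involved beyond the dual map.

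\textbf{What your route buys.} Your first-passage-to-the-minimum decomposition is bijective. It shows that the $m$-th term $(j_3zs)^m\,s\,(j_1zs)^m$ of the geometric series counts exactly the paths with minimum height $-m$. So it proves the refinement $g_{n,0}=\sum_{m\ge 0}(j_3/j_1)^m s_{n,2m}$ along the way.

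The paper obtains that identity only later, in Proposition~\ref{prop:lowestpointenumeration} and its corollary. It uses a different bijection there: it reflects the prefix up to the \emph{last} visit to the minimum, producing a type 1 path to height $2m$. You split at the \emph{first} visit, dualize the prefix and strip its initial $N$.

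Your decomposition also works verbatim for any endpoint height $k\ge 0$: the suffix is then a type 1 path to height $m+k$. This gives column $k$ as $g(z)\,(j_1zs(z))^k$ directly, so Step 1 and its reliance on Theorem~\ref{thm:uniquemultiplier} could be dropped altogether.

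\textbf{What the paper's route buys.} It avoids path surgery, at the cost of the extended-array device and the duality identification of column $-1$.
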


\begin{proof}
    First, consider the matrix $(g_{n,k})_{n,k\ge -1}$, where $g_{-1,-1}=\frac{1}{j_1}$ and $g_{-1,k}=0$ for all $k\ge 0$ for ease of notation. Then, the recurrence in Theorem~\ref{thm:grecurrence} holds for all $n\ge 1$ and $k\ge 0$, and $g_{0,0}=j_1*g_{-1,-1}$. As a result, $(g_{n,k})_{n,k\ge -1}$ has the same $B$-matrix as $S$ by definition. Using Theorem~\ref{thm:amatrixcharacterization} and Theorem~\ref{thm:uniquemultiplier}, we conclude that $(g_{n,k})_{n,k\ge -1}$ is a Riordan matrix with multiplier $j_1*zs(z)$.

    Next, let $g(z)$ be the generating function for $(g_{n,0})_{n\ge 0}$. Then, $z*g(z)$ is the generating function for $(g_{n,0})_{n\ge -1}$. Since $(g_{n,k})_{n,k\ge -1}$ is a Riordan matrix with multiplier $j_1*zs(z)$, $zg(z)*j_1 zs(z)$ is the generating function for $(g_{n,1})_{n\ge -1}$ and $\frac{g(z)}{j_1 s(z)}$ is the generating function for $(g_{n,-1})_{n\ge -1}$.

    Now, by Proposition~\ref{prop:dualbijection}, for $n\ge 0$, $g_{n,-1}$ is equal to the number of $(j_3,j_2,j_1)$-colored NES type 3 paths from $(0,0)$ to $(n,1)$. In addition, by Proposition~\ref{prop:dualbijection}, we know that $g_{n,0}$ is equal to the number of $(j_3,j_2,j_1)$-colored NES type 3 paths from $(0,0)$ to $(n,0)$. As a result, by the same logic as above, $g(z)*j_3 zs(z)$ is the generating function for $(g_{n,-1})_{n\ge 0}$. Note that this holds because $s(z)$ is also the generating function for $s_{n,0}^{j_3,j_2,j_1}$ by Proposition~\ref{prop:sdualbijection}. Then, since $g_{-1,-1}=\frac{1}{j_1}$, we find that the generating function for $(g_{n,-1})_{n\ge -1}$ is $\frac{1}{j_1}+zg(z)*j_3 zs(z)$.

    Since $(g_{n,k})_{n,k\ge -1}$ is a Riordan array with multiplier $j_1*zs(z)$, we now find that
    \[(\frac{1}{j_1}+zg(z)*j_3 zs(z))*j_1^2 z^2 s^2(z) = zg(z)*j_1 zs(z).\]

    Simplifying,
    \[g(z)*zs(z)*(j_1-j_3*j_1^2 *z^2 s^2(z))=j_1*z s^2 (z)\]

    And so
    \[g(z)=\frac{s(z)}{1-j_1 j_3 * z^2 s^2(z)}.\]

    Finally, we note that $G=(g_{n,k})_{n,k\ge 0}$ has the same $B$-matrix as $(g_{n,k})_{n,k\ge -1}$, and its initial column is generated by $g(z)$. So, $G=(g(z),j_1*zs(z))=(\frac{s(z)}{1-j_1 j_3 * z^2 s^2(z)}, j_1*zs(z))$ as desired.
\end{proof}

\begin{corollary}\label{cor:alternateexplicit}
    $s_{n,k}=g_{n,k}-\frac{j_3}{j_1} g_{n,k+2}$.
\end{corollary}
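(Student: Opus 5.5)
Since both $S$ and $G$ are Riordan arrays with the same multiplier $f(z)=j_1 zs(z)$, the cleanest route is through generating functions. By the definition of a Riordan array, column $k$ of $G$ has generating function $g(z)f^k(z)$ and column $k$ of $S$ has generating function $s(z)f^k(z)$. The claimed identity $s_{n,k}=g_{n,k}-\frac{j_3}{j_1}g_{n,k+2}$ for all $n$ is therefore equivalent to the column identity
\[s(z)f^k(z)=g(z)f^k(z)-\frac{j_3}{j_1}\,g(z)f^{k+2}(z).\]
I would first divide out the common factor $f^k(z)$. This is legitimate in the ring of formal power series because $f^k$ is a nonzero power series, and the ring is an integral domain. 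The problem thus reduces to the single identity $s(z)=g(z)\bigl(1-\frac{j_3}{j_1}f^2(z)\bigr)$.

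Next I would substitute $f^2(z)=j_1^2z^2s^2(z)$. This gives
\[1-\frac{j_3}{j_1}f^2(z)=1-j_1j_3z^2s^2(z).\]
By the preceding theorem, $g(z)=\frac{s(z)}{1-j_1j_3z^2s^2(z)}$, so the product $g(z)\bigl(1-\frac{j_3}{j_1}f^2(z)\bigr)$ is exactly $s(z)$, and the identity follows. The only point needing care is that $1-j_1j_3z^2s^2(z)$ has constant term $1$ and is therefore invertible, so the expression for $g(z)$ is a genuine power series. This has already been established in the previous theorem, so no step here is a real obstacle; the whole argument is bookkeeping once the column generating functions are written down.

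As an optional combinatorial check, the identity is a reflection-type statement: $g_{n,k}$ counts all peakless grand paths, and $\frac{j_3}{j_1}g_{n,k+2}$ should count those that dip below $y=0$, via a recoloring bijection swapping a factor of $j_1$ for $j_3$. I would not rely on this for the proof, since the algebraic argument is complete and short.
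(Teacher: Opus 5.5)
Your proof is correct and is essentially the paper's route. The paper states this as an immediate corollary of $G=\bigl(\frac{s(z)}{1-j_1j_3z^2s^2(z)},\,j_1zs(z)\bigr)$ and $S=(s(z),j_1zs(z))$ sharing the multiplier $f(z)$, so column $k$ of $G$ minus $\frac{j_3}{j_1}$ times column $k+2$ is $g(z)f^k(z)\bigl(1-j_1j_3z^2s^2(z)\bigr)=s(z)f^k(z)$; your combinatorial aside also matches the paper's later interpretation via Proposition~\ref{prop:lowestpointenumeration}, where summing $(\frac{j_3}{j_1})^j s_{n,k+2j}$ over $j\ge 1$ gives exactly $\frac{j_3}{j_1}g_{n,k+2}$.
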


\begin{remark}
    This gives us an alternate explicit formula for $s_{n,k}$.
\end{remark}

\begin{proposition}\label{prop:lowestpointenumeration}
    The number of NES type 3 paths from $(0,0)$ to $(n,k)$ with a lowest point at $y=-j$ is equal to ${(\frac{j_3}{j_1})}^j s_{n,k+2j}$.
\end{proposition}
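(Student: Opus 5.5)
Decompose at the last visit to the lowest level and splice with duals. Fix $j\ge 0$. Let $p$ be an NES type 3 path from $(0,0)$ to $(n,k)$ with lowest point $y=-j$. If $j=0$, $p$ never goes below $y=0$, so it is an NES type 1 path and is counted by $s_{n,k}$. For $j\ge 1$, split $p$ at its \emph{first} visit to $y=-j$, writing $p=qr$:
\begin{itemize}
\item $q$ runs from height $0$ to height $-j$ and stays strictly above $-j$ before its endpoint, so $q$ ends in an $S$ step;
\item $r$ runs from height $-j$ to height $k$ and stays weakly above $-j$.
\end{itemize}
Since $p$ has no $NS$, $r$ cannot begin with $S$ (it would go below $-j$), and the junction of $q$ and $r$ creates no peak. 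Translated up by $j$, $r$ is an arbitrary NES type 1 path of length $n-m$ ending at height $k+j$, where $m$ is the length of $q$.

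Apply the dual map to $q$. By Proposition~\ref{prop:dualbijection}, $q^*$ is a $(j_3,j_2,j_1)$-colored type 3 path from $(0,0)$ to $(m,j)$. The condition on $q$ translates to: $q^*$ starts with $N$ and never returns to height $\le 0$ after its start. The count of such paths with a peakless prefix structure is not obviously an $s$-entry, so rather than dualizing $q$ alone I would aim for a bijection
\[\{(q,r)\}\longleftrightarrow\{\text{NES type 1 paths to }(n,k+2j)\}\]
that transfers exactly $j$ colors. Concretely, reflect $q$ across the line $y=-j/2$ piece by piece: replace $q$ by the path $\hat q$ obtained by reversing and flipping (the dual), which now rises from $0$ to $j$. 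Then $\hat q$ followed by $r$ shifted up by $2j$ is a nonnegative path ending at $k+2j$.

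Two points need care with this construction.
\begin{itemize}
\item \textbf{Colors.} $\hat q$ has its $N$ steps colored by $j_3$ and its $S$ steps by $j_1$, with exactly $j$ more $N$ steps than $S$ steps. Recoloring to $(j_1,j_2,j_3)$ multiplies the count by $(j_1/j_3)^j$, which gives the factor $(j_3/j_1)^j$ in the statement.
\item \textbf{Peaks.} The dual of a path ending in $S$ begins with $N$. However, $\hat q$ ends with the dual of $q$'s start, and if $q$ starts with $S$, then $\hat q$ ends in $N$. Appending $r$ to it creates no peak only if $r$ does not start with $S$, which holds.
\end{itemize}
Positivity is also fine: $\hat q$ stays at height $>0$ after its start, so it is nonnegative.

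The main obstacle is surjectivity. Not every type 1 path to $(n,k+2j)$ decomposes this way unless we cut at the \emph{last} visit to height $j$. The dual of $q$, with $q$ cut at its first visit to $-j$, is characterized as a path whose last visit to level $0$ is its start, and which reaches $j$ at its end. So I would cut the type 1 path at its last visit to height $j$ instead, checking that the cut never falls inside an $NS$ pair. If this bookkeeping fails, the fallback is generating functions. The series for "first passage to $-j$ then arbitrary" should factor as $(\text{first-passage}(-1))^j\cdot(\text{type 1 column } k+j)$. Comparing this with Corollary~\ref{cor:alternateexplicit} and the column structure $(s,j_1zs)$ should give $(j_3/j_1)^j s_{n,k+2j}$ directly.
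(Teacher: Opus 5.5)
\textbf{The first-visit bijection fails.} The map $p=qr\mapsto q^*r$, with $q$ cut at the \emph{first} visit to $-j$, does land in type 1 paths to $(n,k+2j)$. It is not a bijection, though, and the failure is not only surjectivity: it is also not injective. The reason is that in $q^*r$ the junction cannot be located, because $q^*$ may revisit height $j$ and the shifted $r$ may revisit $j$ too. Concretely, take $j_1=j_2=j_3=1$, $n=4$, $k=0$, $j=1$. The paths $SEEN$, $ESEN$, $EESN$ have first-passage prefixes $S$, $ES$, $EES$, whose duals are $N$, $NE$, $NEE$, so all three are sent to $NEEN$. Meanwhile $EENN$, $ENEN$, $ENNE$ have no preimage, since $q^*$ must start with $N$. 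Both sets have $6$ elements, so the two defects cancel numerically, but the map proves nothing.

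Your suggested repair, cutting the type 1 path at its last visit to $j$, is not an inverse of this map. That prefix can revisit $0$ (e.g.\ $EEN$ inside $EENN$), so its dual is not a first-passage segment. It is instead a prefix ending at the \emph{last} visit to $-j$. (The remark about the cut falling inside an $NS$ pair is vacuous, since type 1 paths contain no $NS$.)

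\textbf{The fix is the paper's proof.} The cut has to move on the type 3 side. Write the path as $q'r'$ with $q'$ the longest prefix ending at $-j$. Then:
\begin{itemize}
\item $(q')^*$ runs from $0$ to $j$ staying $\ge 0$.
\item $r'$ stays strictly above $-j$ after its start, so it begins with $N$ or $E$ and no $NS$ is created at the junction.
\item In $(q')^*r'$ the junction is exactly the last visit to height $j$, which makes the map invertible.
\end{itemize}
Your colour bookkeeping (the $j$ surplus $S$ steps of $q'$ becoming $N$ steps) then gives the factor $(j_3/j_1)^j$ as you describe.

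\textbf{Your fallback works but is unexecuted.} The generating-function route is correct and would be a genuinely different, non-bijective proof, but as written it is only asserted. To complete it, cut at the first visits to $-1,\dots,-j$:
\begin{itemize}
\item Each piece is a type 1 path returning to its starting level (so it cannot end in $N$) followed by an $S$. Its series is $j_3zs(z)$.
\item Every junction follows an $S$ step, so no peak is created and the decomposition is bijective.
\item The tail is a type 1 path to relative height $k+j$. Its series is $(j_1z)^{k+j}s^{k+j+1}(z)$, because $S=(s,j_1zs)$.
\end{itemize}
The product is $j_3^jj_1^{k+j}z^{k+2j}s^{k+2j+1}(z)$. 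This is exactly $(j_3/j_1)^j$ times the generating function of column $k+2j$ of $S$, which is the statement. No appeal to Corollary~\ref{cor:alternateexplicit} is needed.
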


\begin{proof}
    First, we ignore colors and effectively assume that $j_1=j_2=j_3=1$. We show that in this case, there exists a bijection between NES type 3 paths from $(0,0)$ to $(n,k)$ with a lowest point at $y=-j$, and NES type 1 paths from $(0,0)$ to $(n,k+2j)$.

    Let $pq$ be an NES type 3 path from $(0,0)$ to $(n,k)$ with a lowest point at $y=-j$. We let $p$ be the largest left prefix of $pq$ which ends at $y=-j$. We note that $p$ must be immediately followed by either $E$ or $N$, as it reaches the lowest point of $pq$. As such, $p^* q$ cannot have a peak at the end of $p^*$ and the beginning of $q$. Furthermore, $p$ stays at $y\ge -j$ while ending at $y=-j$, so $p^*$ must never go below its starting height. Similarly, $q$ must never go below its starting height of $y=-j$ in $pq$, as $-j$ is the lowest height which $pq$ reaches. As such, in $p^* q$, $q$ starts at $y=j$ and stays at $y\ge j$. This is sufficient to conclude that $p^* q$ is an NES type 1 path. Note that $p$ has endpoint height $-j$ and $pq$ has endpoint height $k$, so $q$ must have endpoint height $j+k$. This means that $p^* q$ has endpoint height $k+2j$.
    
    Likewise, let $pq$ be an NES type 1 path from $(0,0)$ to $(n,k+2j)$ where $p$ is the largest left prefix which reaches $y=j$. We know that $q$ starts at $y=j$, ends at $y=k+2j$, and never reaches $y=j$ again, so it cannot start on $S$. This means that $p^* q$ does not have a peak. Furthermore, $p$ always stays at $y\ge 0$ and ends at $y=j$, meaning that $p$ always stays at most $j$ below its ending height. As such, $p^*$ stays at most $j$ below its starting height, making this an NES type 3 path with a lowest point at $y=-j$. Now, $p^* q$ is an NES type 3 path as well, with a lowest point at $y=-j$ and an ending height of $y=k$ since $q$ cannot go below its starting height as we showed above.

    It is easy to see that these two processes are inverses, so we have a bijection. 
    
    Now, we will consider this problem with colored steps. We may define an equivalence class on $(j_1,j_2,j_3)$-colored NES type 3 paths from $(0,0)$ to $(n,k)$ with a lowest point at $y=-j$ by removing colors from steps. Each equivalence class then corresponds to an uncolored NES type 3 path with a lowest point at $y=-j$. Likewise, we may define an equivalence class on NES type 1 paths from $(0,0)$ to $(n,k+2j)$. Each of these equivalence classes correspond to an uncolored NES type 1 path. The bijection described between uncolored paths above then induces a bijection between these sets of equivalence classes on colored paths.
    
    Consider the equivalence classes of two corresponding uncolored paths $pq$ and $p^* q$. We know that $pq$ has $a$ $N$ steps, $b$ $E$ steps, and $c$ $S$ steps. Then, $p^* q$ has $a+j$ $N$ steps, $b$ $E$ steps, and $c-j$ $S$ steps. We then find that $pq$ has an equivalence class of $j_1^a j_2^b j_3^c$ colored paths and $p^* q$ has an equivalence class of $j_1^{a+j} j_2^b j_3^{c-j}$ colored paths. We note that $j_1^a j_2^b j_3^c={(\frac{j_3}{j_1})}^j j_1^{a+j} j_2^b j_3^{c-j}$.
    
    So, our equivalence classes of NES type 3 paths with a lowest point at $y=-j$ always differ from their corresponding equivalence classes of NES type 1 paths in size by a factor of $(\frac{j_3}{j_1})^j$. We note that our first set of equivalence classes creates a partition of $(j_1,j_2,j_3)$-colored NES type 3 paths from $(0,0)$ to $(n,k)$ with a lowest point at $y=-j$. Likewise, our second set of equivalence classes partitions $(j_1,j_2,j_3)$-colored NES type 1 paths from $(0,0)$ to $(n,k+2j)$. This is sufficient to conclude that the set of $(j_1,j_2,j_3)$-colored NES type 3 paths from $(0,0)$ to $(n,k)$ with a lowest point at $y=-j$ differs in size from $(j_1,j_2,j_3)$-colored NES type 1 paths from $(0,0)$ to $(n,k+2j)$ by the same factor. As such, there exist ${(\frac{j_3}{j_1})}^j s_{n,k+2j}$ $(j_1,j_2,j_3)$-colored NES type 3 paths from $(0,0)$ to $(n,k)$ with a lowest point at $y=-j$.
\end{proof}

\begin{corollary}
    $g_{n,k}=\sum_{j\ge 0} {(\frac{j_3}{j_1})}^j s_{n,k+2j}$
\end{corollary}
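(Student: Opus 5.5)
The corollary follows from Proposition~\ref{prop:lowestpointenumeration} by partitioning according to the lowest point. Every NES type 3 path from $(0,0)$ to $(n,k)$ with $k\ge 0$ starts at height $0$, so its minimum height is $-j$ for a unique integer $j\ge 0$. The set counted by $g_{n,k}$ is therefore the disjoint union, over $j\ge 0$, of the sets of NES type 3 paths from $(0,0)$ to $(n,k)$ whose lowest point is at $y=-j$.

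By Proposition~\ref{prop:lowestpointenumeration}, the $j$-th part has exactly ${(\frac{j_3}{j_1})}^j s_{n,k+2j}$ elements. Summing over $j$ gives
\[g_{n,k}=\sum_{j\ge 0} {\left(\frac{j_3}{j_1}\right)}^j s_{n,k+2j}.\]

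It remains to check that the sum is finite, so that no convergence issue arises. We have $s_{n,k+2j}=0$ once $k+2j>n$, since a path of length $n$ cannot end above height $n$. Hence only finitely many terms are nonzero, and this is the only point needing care.

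As a consistency check, one can compare with Corollary~\ref{cor:alternateexplicit}. Substituting $s_{n,k+2j}=g_{n,k+2j}-\frac{j_3}{j_1}g_{n,k+2j+2}$ into the right-hand side makes the sum telescope to $g_{n,k}$, using again that $g_{n,m}=0$ for $m>n$. This telescoping gives an independent algebraic proof of the corollary.
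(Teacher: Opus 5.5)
Your argument is correct and is the same as the paper's proof: partition the paths counted by $g_{n,k}$ by the unique $j\ge 0$ at which the lowest point $y=-j$ occurs, apply Proposition~\ref{prop:lowestpointenumeration} to each part, and sum; your finiteness remark ($s_{n,m}=0$ for $m>n$) states explicitly a point the paper leaves implicit. Your telescoping check via Corollary~\ref{cor:alternateexplicit} is also valid, and it is the algebraic counterpart of the paper's remark that this corollary gives a combinatorial interpretation of Corollary~\ref{cor:alternateexplicit}.
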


\begin{proof}
    We know that any NES type 3 path from $(0,0)$ to $(n,k)$ has a lowest point at $y=-j$ for some unique $j\ge 0$. From this, our desired summation immediately follows.
\end{proof}

\begin{remark}
    This gives us a combinatorial interpretation for Corollary~\ref{cor:alternateexplicit}.
\end{remark}

\subsection{Pattern Statistics}
\begin{definition}
    Let $s_{n,k}^*$ denote the total number of $(j_3,j_2,j_1)$-colored NES type 1 paths starting at $(0,0)$ and ending at $(n,k)$.
\end{definition}

\begin{definition}
    Let $g_{n,k}^*$ denote the total number of $(j_3,j_2,j_1)$-colored NES type 3 paths starting at $(0,0)$ and ending at $(n,k)$.
\end{definition}

\begin{remark}
    When $j_1=j_3$, $g^*_{n,k}=g_{n,k}$ and $s_{n,k}^*=s_{n,k}$. For the remainder of this paper, we will refer to $(j_3,j_2,j_1)$-colored $NES$ paths as $NES^*$ paths.
\end{remark}

\begin{proposition}\label{prop:dualwords}
    Let $w$ be an NES type 3 path with endpoint height $h$. The number of occurences of $w$ in NES type 1 paths from $(0,0)$ to $(n,0)$ at $y=j$ is equal to the number of occurences of $w^*$ in corresponding $NES^*$ type 1 paths at $y=j+h$.
\end{proposition}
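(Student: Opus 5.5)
The plan is a direct bijection. The map $p\mapsto p^*$ is, by Proposition~\ref{prop:sdualbijection}, a bijection from $(j_1,j_2,j_3)$-colored NES type 1 paths from $(0,0)$ to $(n,0)$ onto $NES^*$ type 1 paths from $(0,0)$ to $(n,0)$. It therefore suffices to show that, for each such $p$, occurrences of $w$ in $p$ starting at height $y=j$ correspond bijectively to occurrences of $w^*$ in $p^*$ starting at height $y=j+h$. Summing over all $p$ then gives the claim. Here "an occurrence at $y=j$" means a factorization $p=uwv$ in which the prefix $u$ ends at height $j$. I will fix this convention at the start.

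The key step is to track heights under duality. Write $p=uwv$ with $u$ ending at height $j$. Since the reversal of a concatenation reverses the order of factors, and flipping $N$ and $S$ acts letter by letter, we get $p^*=v^*w^*u^*$. So $w^*$ occurs in $p^*$ as a factor, preceded by $v^*$, and the task is to compute the height at which $v^*$ ends.

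Because $p$ ends at height $0$, the factor $v$ has endpoint height $0-(j+h)=-(j+h)$ as a path in its own right. By the height computation in Proposition~\ref{prop:dualbijection}, $N$ and $S$ counts are swapped under $*$, so $v^*$ has endpoint height $j+h$. Since $p^*$ starts at $(0,0)$, the occurrence of $w^*$ in $p^*$ begins at $y=j+h$. The color bookkeeping is automatic: $*$ preserves the color of each step, and a step of type $N$ colored with one of $j_1$ colors becomes an $S$ step, which is exactly what $NES^*$ allows.

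To see that this gives a bijection of occurrences, note that the assignment $(u,w,v)\mapsto(v^*,w^*,u^*)$ is injective on factorizations, and it is an involution because $(x^*)^*=x$. Applying the same height argument with the roles of $p$ and $p^*$ exchanged (the starting height of $w^*$ is $j+h$, and $w^*$ has endpoint height $-h$) returns height $j$. So occurrences correspond one-to-one in both directions.

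The main obstacle is convention rather than substance. One must confirm that "at $y=j$" refers to the starting height of the occurrence. If the intended convention were instead the minimum height or the ending height, the shift $j\mapsto j+h$ would need rechecking: under ending height the roles of $j$ and $j+h$ interchange symmetrically, and I would adjust the computation accordingly. Note also that peaklessness of $w^*$ is not needed separately, since $w^*$ sits inside the type 1 path $p^*$.
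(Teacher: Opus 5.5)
Your proof is correct and matches the paper's argument: the paper also writes the path as $pwq$, passes to $(pwq)^*=q^*w^*p^*$, and gets that $q^*$ ends at height $j+h$ because $q$ has endpoint height $-(j+h)$; it then uses $((pwq)^*)^*=pwq$ for the inverse direction. Your citation of Proposition~\ref{prop:sdualbijection} for the bijection on type 1 paths ending at $(n,0)$ is, if anything, more precise than the paper's appeal to Proposition~\ref{prop:dualbijection}.
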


\begin{proof}
    By Proposition~\ref{prop:dualbijection}, $p\mapsto p^*$ induces a bijection between NES type 1 paths and $NES^*$ type 1 paths from $(0,0)$ to $(n,0)$.

    Now, consider an arbitrary NES type 1 path from $(0,0)$ to $(n,0)$ with an occurence of $w$, which we call $pwq$. Then, $(pwq)^*=q^* w^* p^*$ has a corresponding occurence of $w^*$. Furthermore, we know that $w$ occurs at $y=j$, so it ends at height $j+h$. Then, $q$ goes from $y=j+h$ to $y=0$, so its endpoint height is $-(j+h)$. It follows that $q^*$ has an endpoint height of $j+h$ since we swap the relative numbers of $N$ and $S$ steps. As such, our corresponding occurence of $w^*$ in $q^* w^* p^*$ occurs at $y=j+h$.

    By an identical argument, since $w^*$ has an endpoint height of $-h$, occurences of $w^*$ at $y=j+h$ in $NES^*$ type 1 paths from $(0,0)$ to $(n,0)$ correspond to occurences of $w$ in NES type 1 paths at $y=j$. Since $((pwq)^*)^*=pwq$, this is sufficient to conclude that occurences of $w$ at $y=j$ in NES type 1 paths from $(0,0)$ to $(n,0)$ are in bijection with occurences of $w^*$ at $y=j+h$ in $NES^*$ type 1 paths from $(0,0)$ to $(n,0)$ as desired.
\end{proof}

\begin{lemma}\label{lemma:nes1wordcount}
    Let $w$ be an NES type 1 path from $(0,0)$ to $(\ell,h)$. The number of occurences of $w$ in NES type 1 paths from $(0,0)$ to $(n,0)$ at height $y=j$ is equal to:
    \begin{enumerate}
        \item[(i)] $\frac{1}{j_3}{(\frac{j_1}{j_3})}^j s_{n-\ell+1,1+h+2j}^*$ if $w$ does not end on $N$.
        \item[(ii)] $\frac{1}{j_3}{(\frac{j_1}{j_3})}^j (s_{n-\ell+1,1+h+2j}^* - j_3*s_{n-\ell,h+2j}^*)$ if $w$ ends on $N$. 
    \end{enumerate}
\end{lemma}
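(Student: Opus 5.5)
The plan is to decompose each occurrence, reduce the count to a convolution of two columns of Riordan arrays, and then identify that convolution with a single column of $S^*$ through generating functions.

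\textbf{Decomposition.} An occurrence of $w$ at height $y=j$ in an NES type 1 path from $(0,0)$ to $(n,0)$ is the same thing as a factorization $pwq$ of that path, where:
\begin{enumerate}
\item[(a)] $p$ is an NES type 1 path from $(0,0)$ to $(m,j)$ for some $m$;
\item[(b)] $q$ runs from height $j+h$ down to height $0$ in $n-\ell-m$ steps, never goes below $y=0$, and contains no $NS$.
\end{enumerate}
Since $w$ is itself type 1, it does not begin with $S$, so no peak can form at the junction of $p$ and $w$. At the junction of $w$ and $q$, a peak is possible only if $w$ ends on $N$ and $q$ begins with $S$.

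\textbf{Reduction to NES$^*$ paths.} Arguing as in Proposition~\ref{prop:dualbijection} and Proposition~\ref{prop:sdualbijection}, the map $q\mapsto q^*$ sends such segments $q$ bijectively and weight-preservingly onto $NES^*$ type 1 paths from $(0,0)$ to $(n-\ell-m,j+h)$. The weight is preserved because colors are kept while $N$ and $S$ are swapped. Hence in case (i) the count equals
\[\sum_m s_{m,j}\, s^*_{n-\ell-m,j+h}.\]
In case (ii) we must discard those $q$ that begin with $S$, equivalently those $q^*$ that end on $N$. In the $NES^*$ setting an $N$ step has $j_3$ colors, so by Proposition~\ref{prop:spathends} the number of such $q^*$ is $j_3 s^*_{n-\ell-m-1,j+h-1}$. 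Here $h\ge 1$ because $w$ is a type 1 path ending on $N$, so the index $j+h-1$ is nonnegative.

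\textbf{Generating functions.} By Proposition~\ref{prop:generatingexpression} and the remark following it, $s(z)$ depends only on $j_1j_3$ and $j_2$. Therefore $S^*=(s(z),j_3zs(z))$ with the same $s$. The column generating functions are $s(j_1zs)^j$ for $S$ and $s(j_3zs)^k$ for $S^*$, so the generating function in $n$ of the case (i) sum is
\[z^\ell s^2 j_1^j j_3^{j+h}(zs)^{2j+h}.\]
On the other side, $\frac{1}{j_3}(\frac{j_1}{j_3})^j s^*_{n-\ell+1,1+h+2j}$ has generating function
\[z^{\ell-1}\cdot\tfrac{1}{j_3}\left(\tfrac{j_1}{j_3}\right)^j s\,(j_3zs)^{1+h+2j},\]
which simplifies to the same expression. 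For case (ii), the subtracted sum has generating function
\[z^{\ell+1}s^2 j_1^j j_3^{j+h}(zs)^{2j+h-1}.\]
This agrees with the generating function of $\frac{1}{j_3}(\frac{j_1}{j_3})^j j_3 s^*_{n-\ell,h+2j}$, namely $z^\ell (j_1/j_3)^j\, s\,(j_3zs)^{h+2j}$. Comparing coefficients of $z^n$ gives both formulas.

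\textbf{Main obstacle.} The hardest step is justifying that $S^*$ shares the function $s$ with $S$ and has multiplier $j_3zs(z)$, together with checking that reversing $q$ correctly transports the non-negativity and peakless conditions and the coloring. If that identification is established carefully, the remaining work is routine coefficient extraction.
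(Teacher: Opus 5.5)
Your argument is correct, but it takes a different route from the paper. The paper argues bijectively. It first passes to $w^*$ in $NES^*$ paths via Proposition~\ref{prop:dualwords}. It then uses the cyclic rotation $pw^*q\mapsto qNp$, inverted with the cycle lemma (Proposition~\ref{prop:cyclelemma}), to show that each occurrence corresponds to exactly $j_3$ $NES^*$ type 3 paths from $(0,0)$ to $(n-\ell+1,1+h)$ with lowest point $y=-j$. It counts those by Proposition~\ref{prop:lowestpointenumeration}, and handles case (ii) by discarding the prefixes $p$ that end on $N$. You instead cut the path at the occurrence and dualize only the suffix, so the count becomes the convolution $\sum_m s_{m,j}\,s^*_{n-\ell-m,j+h}$ of a column of $S$ with a column of $S^*$. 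Because both arrays have the same first column $s$, the identity $s=\frac{1}{j_3z}(j_3zs)$ absorbs one factor of $s$ into the multiplier of $S^*$. That is exactly where the shift by $1$ in both indices and the prefactor $\frac{1}{j_3}$ come from; in the paper they come from the inserted $N$ step and its $j_3$ colors. Your version is shorter and more mechanical. It avoids the cycle lemma, the lowest-point bijection with its color bookkeeping, and the check that the distinguished $N$ of the rotation cannot fall inside $w^*$, and the correction in (ii) becomes a single subtraction. What the paper's proof buys is a direct combinatorial meaning for the single entry $s^*_{n-\ell+1,1+h+2j}$ as counting marked occurrences. The paper uses this later, for example in its marked-$E$-step reading of $s_{n,1+2j}$. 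Your proof only confirms the identity at the level of generating functions. Finally, the step you flag as the main obstacle is already in the paper. Apply the corollary of Theorem~\ref{thm:srecurrence} giving $S=(s(z),j_1zs(z))$ with colors $(j_3,j_2,j_1)$, and use Proposition~\ref{prop:sdualbijection} for equality of first columns, as the paper itself does when computing $G$. This gives $S^*=(s(z),j_3zs(z))$ immediately.
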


\begin{proof}
    By Proposition~\ref{prop:dualwords}, this is equivalent to counting $w^*$ in $NES^*$ type 1 paths from $(0,0)$ to $(n,0)$ at height $y=j+h$. We will first consider the case where $w$ does not end on $N$. This means that $w^*$ does not start on $S$.

    Let $pw^* q$ be an $NES^*$ type 1 path from $(0,0)$ to $(n,0)$ where $w^*$ occurs at $y=j+h$. Then, we  add an $N$ step and perform circular rotation to obtain $w^* qNp$. Finally, we remove $w^*$ to obtain $qNp$. Since $pw^* q$ is an $NES^*$ type 1 path, $p$ cannot start on $S$, so $qNp$ does not have any peaks. As such, we have constructed $j_3$ paths of $NES^*$ type 3, depending on which color we use for the $N$ that we added. We note that $qNp$ is a path from $(0,0)$ to $(n-\ell+1,1+h)$. Furthermore, since $w^*$ occurs at $y=j+h$, $j+h$ must be the ending height of $p$. Since $pw^* q$ ends at $y=0$, we further conclude that $q$ has an ending height of $-j$. Now, this means that $qNp$ reaches a height of $y=-j$. Since $pw^* q$ is an $NES^*$ type 1 path, $Np$ cannot go below its starting height of $y=-j$. Likewise, as $pw^* q$ ends at $y=0$, $q$ may not go below its ending height, which is $y=-j$. Thus, $qN p$ reaches $y=-j$ as its lowest height. In this way, we have exactly $j_3$ paths of $NES^*$ type 3 with lowest height of $y=-j$ from $(0,0)$ to $(n-\ell+1,1+h)$.

    Now, say we are given any path $x$ of $NES^*$ type 3 with a lowest height of $y=-j$ from $(0,0)$ to $(n-\ell+1,1+h)$. Then, we construct $w^* x$. This has no peaks, since $w$ is an NES type 1 path, so it cannot start on $S$, which means that $w^*$ cannot end on $N$. As such, $w^* x$ is an $NES^*$ type 3 path from $(0,0)$ to $(n+1,1)$. This means that there is exactly one more $N$ step than $S$ step, so by Proposition~\ref{prop:cyclelemma}, there exists a unique circular rotation of $w^* x$ to start on a step of type $N$ where $N$ and $S$ steps in that circular rotation form a dominating sequence. Furthermore, this step of type $N$ cannot occur in $w^*$, as this would mean that a right prefix of $w^*$ forms a dominating sequence. This is because it would then follow that the corresponding left prefix of $w$ must stay strictly below its starting point, which is impossible as $w$ is an NES type 1 path.
    
    So, call this circular rotation $Np w^* q$. We note that since $w$ does not end on $N$, $w^*$ does not start on $S$, so $Np w^* q$ is peakless. Since $Np w^* q$ forms a dominating sequence, $p w^* q$ must always stay at $y\ge 0$, so $p w^* q$ is an $NES^*$ type 1 path from $(0,0)$ to $(n,0)$. Furthermore, $x=qNp$ where $Np$ must never go below its starting height and $q$ must never go below its ending height since $pw^* q$ is an $NES^*$ type 1 path ending at $y=0$. As such, the ending height of $q$ must be the lowest height in $x$, which means that $q$ ends at $y=-j$. Since $x$ ends at $y=1+h$, this means that $p$ must end at $y=j+h$. Thus, $pw^* q$ is an $NES^*$ type 1 path from $(0,0)$ to $(n,0)$ where we have specified an occurence of $w^*$ at $y=j+h$.

    From these two processes, we see that each occurence of $w^*$ corresponds to exactly $j_3$ paths of $NES^*$ type 3 from $(0,0)$ to $(n-\ell+1,1+h)$ with a lowest point at $y=-j$. By Proposition~\ref{prop:lowestpointenumeration}, such paths are counted by ${(\frac{j_1}{j_3})}^j s_{n-\ell+1,1+h+2j}^*$. So, occurences of $w^*$ are counted by $\frac{1}{j_3}{(\frac{j_1}{j_3})}^j s_{n-\ell+1,1+h+2j}^*$ as desired.

    Next, we consider the case where $w$ ends on $N$. The argument is exactly as above, except that we require that $p$ does not end on $N$, since this would mean that $pw^* q$ has a peak. By an argument similar to Proposition~\ref{prop:gpathends}, $NES^*$ type 3 paths with a lowest height of $y=-j$ from $(0,0)$ to $(n-\ell,h)$ each correspond to exactly $j_3$ such paths from $(0,0)$ to $(n-\ell+1,h+1)$ not ending on $N$. Due to this, using the same processes as above,  $\frac{1}{j_3}{(\frac{j_1}{j_3})}^j (s_{n-\ell+1,1+h+2j}^* - j_3*s_{n-\ell,h+2j}^*)$ counts the occurences of $w^*$.
\end{proof}

\begin{corollary}
    Let $w$ be an NES type 1 path from $(0,0)$ to $(\ell,h)$. The number of occurences of $w$ in NES type 1 paths from $(0,0)$ to $(n,0)$ at height $y\ge j$ is equal to:
    \begin{enumerate}
        \item[(i)] $\frac{1}{j_3} {(\frac{j_1}{j_3})}^j g_{n-\ell+1,1+h+2j}^*$ if $w$ does not end on $N$.
        \item[(ii)] $\frac{1}{j_3}{(\frac{j_1}{j_3})}^j (g_{n-\ell+1,1+h+2j}^* - j_3*g_{n-\ell,h+2j}^*)$ if $w$ ends on $N$. 
    \end{enumerate}
\end{corollary}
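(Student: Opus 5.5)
The plan is to sum the formula of Lemma~\ref{lemma:nes1wordcount} over all heights $y=i$ with $i\ge j$, and then collapse the sum using the corollary to Proposition~\ref{prop:lowestpointenumeration}, applied to the starred (i.e.\ $(j_3,j_2,j_1)$-colored) arrays.

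First, we note that this corollary holds with the colors swapped. It was proved for an arbitrary fixed triple $(j_1,j_2,j_3)$, so applying it to $(j_3,j_2,j_1)$ gives
\[g^*_{m,r}=\sum_{t\ge 0}\left(\frac{j_1}{j_3}\right)^t s^*_{m,r+2t}.\]
Equivalently, we can apply Proposition~\ref{prop:lowestpointenumeration} directly to $NES^*$ paths, where the ratio becomes $j_1/j_3$.

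Case (i): $w$ does not end on $N$. Every occurrence of $w$ at height at least $j$ occurs at a unique height $y=i$ with $i\ge j$. So the count is
\[\sum_{i\ge j}\frac{1}{j_3}\left(\frac{j_1}{j_3}\right)^i s^*_{n-\ell+1,\,1+h+2i}.\]
Substitute $i=j+t$ and factor out $\frac{1}{j_3}(\frac{j_1}{j_3})^j$. What remains is
\[\sum_{t\ge 0}\left(\frac{j_1}{j_3}\right)^t s^*_{n-\ell+1,\,(1+h+2j)+2t},\]
which by the starred identity above equals $g^*_{n-\ell+1,1+h+2j}$.

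Case (ii): $w$ ends on $N$. The summand is $\frac{1}{j_3}(\frac{j_1}{j_3})^i\bigl(s^*_{n-\ell+1,1+h+2i}-j_3 s^*_{n-\ell,h+2i}\bigr)$. Split the sum into its two parts; this is allowed because both are finite sums, since $s^*_{m,r}=0$ when $r>m$. The first part collapses exactly as in case (i). For the second part, the same reindexing gives
\[-j_3\sum_{t\ge 0}\left(\frac{j_1}{j_3}\right)^t s^*_{n-\ell,\,h+2j+2t}=-j_3\, g^*_{n-\ell,h+2j}.\]
Together these give the stated formula.

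The only point that needs care is checking that the starred version of the corollary is legitimate, in particular that the weight $(\frac{j_1}{j_3})^t$ carries the correct orientation. Under the color swap $j_1\leftrightarrow j_3$, the ratio $j_3/j_1$ from Proposition~\ref{prop:lowestpointenumeration} becomes $j_1/j_3$, and this matches the factor already appearing in Lemma~\ref{lemma:nes1wordcount}, so the geometric weights line up term by term. A secondary point is that the heights involved never go negative: $1+h+2j\ge 1$ and $h+2j\ge 0$, so all the $g^*$ entries are genuine counts of type~3 paths ending at nonnegative height, which is the setting in which the corollary was proved.
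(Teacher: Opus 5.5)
Your proposal is correct and follows the paper's approach. The paper's proof consists only of the instruction to sum Lemma~\ref{lemma:nes1wordcount} over all heights $\ge j$. Your reindexing $i=j+t$ and your use of the $(j_3,j_2,j_1)$-colored identity $g^*_{m,r}=\sum_{t\ge 0}(j_1/j_3)^t s^*_{m,r+2t}$ spell out exactly the collapse that the paper leaves implicit.
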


\begin{proof}
    Use Lemma~\ref{lemma:nes1wordcount} and take sums of results for each height $\ge j$.
\end{proof}

\begin{theorem}\label{thm:countingwords}
    Let $w$ be an NES type 3 path from $(0,0)$ to $(\ell,h)$ with a lowest point of $y=-k$, where $h\ge 0$. The number of occurences of $w$ in NES type 1 paths from $(0,0)$ to $(n,0)$ at height $y=j\ge k$ is:
    \begin{enumerate}
        \item[(i)] $\frac{1}{j_3}{(\frac{j_1}{j_3})}^j s_{n-\ell+1,1+h+2j}^*$ if $w$ does not start on $S$ or end on $N$.
        \item[(ii)] $\frac{1}{j_3}{(\frac{j_1}{j_3})}^j (s_{n-\ell+1,1+h+2j}^* - j_3*s_{n-\ell,h+2j}^*)$ if $w$ does not start on $S$ but does end on $N$. 
        \item[(iii)] $\frac{1}{j_3}{(\frac{j_1}{j_3})}^j (s_{n-\ell+1,1+h+2j}^* - j_1*s_{n-\ell,h+2j}^*)$ if $w$ starts on $S$ but does not end on $N$.
        \item[(iv)]$\frac{1}{j_3}{(\frac{j_1}{j_3})}^j (s_{n-\ell+1,1+h+2j}^* - (j_1 + j_3)s_{n-\ell,h+2j}^* + j_1j_3*s_{n-\ell-1,h+2j-1})$ if $w$ starts on $S$ and ends on $N$.  
    \end{enumerate}
\end{theorem}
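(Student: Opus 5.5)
Proving Theorem~\ref{thm:countingwords} comes down to rerunning the proof of Lemma~\ref{lemma:nes1wordcount} with two changes. First, $w$ is now only an NES type 3 path with lowest point $-k$; the hypothesis $j\ge k$ is exactly what keeps an occurrence of $w$ at height $j$ above $y=0$. Second, we must add the adjacency constraints on both ends of $w$.

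\textbf{Step 1: case (i).} Dualize with Proposition~\ref{prop:dualwords}, so we count occurrences of $w^*$ at height $j+h$ in $NES^*$ type 1 paths from $(0,0)$ to $(n,0)$. For such a path $pw^*q$, form $qNp$ using $j_3$ colours for the inserted $N$, as in the Lemma. This gives an $NES^*$ type 3 path from $(0,0)$ to $(n-\ell+1,1+h)$ with lowest point $-j$.

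For the inverse, start from such an $x$ and take the unique dominating rotation of $w^*x$ supplied by Proposition~\ref{prop:cyclelemma}. The point that needs care is that the distinguished initial $N$ cannot lie inside $w^*$. In the Lemma this used that $w$ never dips below its start; here $w$ dips to $-k$. So I will argue via heights instead. A dominating start inside $w^*$ would force the corresponding suffix of $w^*$, continued by $x$, to stay strictly above its start. Since $x$ goes down to relative height $-j$ and $w$ goes down only to $-k\ge -j$, this is impossible. An equivalent way to see it: the minimum of the cyclic word $w^*x$ is attained inside $x$, at the end of $q$.

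The peaklessness conditions are the junctions between $w^*$ and its neighbours. When $w$ neither starts with $S$ nor ends with $N$, $w^*$ neither ends with $N$ nor starts with $S$, so no junction can create a peak. Proposition~\ref{prop:lowestpointenumeration} then gives formula (i).

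\textbf{Step 2: cases (ii)--(iv) by inclusion--exclusion.} If $w$ ends with $N$, then $w^*$ starts with $S$, and we must exclude those $x=qNp$ whose $p$ ends with $N$. Exactly as in the Lemma, these are counted by deleting that final $N$ (a factor $j_3$, the $N^*$ colour), which gives the subtracted term $j_3 s^*_{n-\ell,h+2j}$.

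If $w$ starts with $S$, then $w^*$ ends with $N$, and we must exclude $x$ whose $q$ starts with $S$. Deleting that initial $S$ costs a factor $j_1$ (the $S^*$ colour). It lowers the endpoint by one but shifts the path so that the lowest point becomes $-(j-1)$. After Proposition~\ref{prop:lowestpointenumeration} this should again land on $s^*_{n-\ell,h+2j}$, up to the power of $j_1/j_3$. I will check this index and power bookkeeping carefully; the conjectured outcome is the term $j_1 s^*_{n-\ell,h+2j}$ in (iii).

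For (iv), both exclusions apply. Inclusion--exclusion adds back the paths where $q$ starts with $S$ and $p$ ends with $N$. Removing both steps gives the $j_1j_3$ term with indices $(n-\ell-1,h+2j-1)$.

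\textbf{Main obstacle.} The hardest step is verifying that deleting the initial $S$ of $q$ preserves the structure: peaklessness, the lowest-point value, and the correct application of Proposition~\ref{prop:lowestpointenumeration}. Degenerate cases, such as $q$ being empty, also need checking, so that the prefactor $\frac{1}{j_3}(j_1/j_3)^j$ factors out uniformly. If the direct deletion is awkward, my fallback is to apply Proposition~\ref{prop:gpathends} to the reversed/dual paths, using a symmetric version for prefixes.
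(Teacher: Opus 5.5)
Your Step 1 and case (ii) are sound, and they take a different route from the paper's. The paper does not rerun the cyclic bijection for a type 3 word. Instead it rearranges $w$ (or $wE$) into $N^{i_1}E^{i_2}S^{i_3}$ and applies Lemma~\ref{lemma:nes1wordcount} to that word. It then handles the boundary constraints by applying the Lemma again to $xS$, $Nx$, $NxS$ at shifted heights, with a separate dual argument when $h=0$.

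Your observation that, since $j\ge k$, the last minimum of the cyclic word $w^*x$ lies in $x$ makes the rearrangement unnecessary. Doing the inclusion--exclusion on the first and last steps of $x=qNp$ also avoids the $h=0$ special case. You should record two small facts:
\begin{enumerate}
\item[(a)] If $w$ ends in $N$ then $h+k\ge 1$. So $p\neq\emptyset$, and ``$p$ ends in $N$'' means ``$x$ ends in $N$''.
\item[(b)] If $w$ starts with $S$ then $j\ge k\ge 1$. So $q\neq\emptyset$, and ``$q$ starts with $S$'' means ``$x$ starts with $S$''.
\end{enumerate}

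\textbf{The gap.} The bookkeeping you postponed in (iii) does not produce the stated coefficient. Deleting the initial $S$ of $x$ costs a factor $j_1$, the number of $S$-colours for $NES^*$ paths. What remains is an $NES^*$ type 3 path from $(0,0)$ to $(n-\ell,h+2)$ with lowest point $-(j-1)$. Proposition~\ref{prop:lowestpointenumeration} then gives
\[j_1\Bigl(\frac{j_1}{j_3}\Bigr)^{j-1}s^*_{n-\ell,h+2j}=j_3\Bigl(\frac{j_1}{j_3}\Bigr)^{j}s^*_{n-\ell,h+2j}\]
excluded paths. After factoring out $\frac{1}{j_3}(\frac{j_1}{j_3})^j$, the subtracted term is $j_3s^*_{n-\ell,h+2j}$, the same as in (ii), not $j_1s^*_{n-\ell,h+2j}$. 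Likewise (iv) becomes $s^*_{n-\ell+1,1+h+2j}-2j_3s^*_{n-\ell,h+2j}+j_3^2s^*_{n-\ell-1,h+2j-1}$.

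Your method is right; the statement is not. Take $w=SNE$ (so $\ell=3$, $h=0$, $k=1$), $j=1$, $n=6$. The occurrences are exactly those in $NE\,w\,S$, so there are $j_1j_2j_3$ of them. But $s^*_{4,3}=4j_2j_3^3$ and $s^*_{3,2}=3j_2j_3^2$, so formula (iii) equals $j_1j_2(4j_3-3j_1)$, which is negative for $(j_1,j_2,j_3)=(2,1,1)$.

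The paper's own argument has the same slip. $j_1$ times the Lemma count for $Nx$ at height $j-1$ equals $(\frac{j_1}{j_3})^js^*_{n-\ell,h+2j}$, i.e.\ coefficient $j_3$. Its (iv) also writes $s$ where $s^*$ is meant.

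So (iii) and (iv) cannot be proved as written. You should state and prove the corrected versions, which agree with the printed statement when $j_1=j_3$.
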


\begin{proof}
    First, consider the case where $w$ does not start on $S$ or end on $N$. It is easy to see that occurrences of $w$ in NES type 1 paths from $(0,0)$ to $(n,0)$ correspond bijectively to occurences of $wE$ in NES type 1 paths from $(0,0)$ to $(n+1,0)$ by replacing $w$ with $wE$ or vice versa. Now, $wE$ is a permutation of the word $x=N^{i_1} E^{i_2} S^{i_3}$. By replacing $wE$ with $x$ or vice versa, we again see that occurences correspond bijectively in NES type 1 paths from $(0,0)$ to $(n+1,0)$ at height $y=j$. This is because no new peaks may be added and since any occurence of $x$ counted starts at $y=j$, the lowest point after adding in $wE$ is $j-k\ge 0$. Since $h\ge 0$, we see that $x$ is an NES type 1 path. As such, by Lemma~\ref{lemma:nes1wordcount}, we may count its occurences at $y=j$ by $\frac{1}{j_3}{(\frac{j_1}{j_3})}^j s_{n-\ell+1,1+h+2j}^*$. 

    Next, consider the case where $w$ does not start on $S$ but does end on $N$. This again corresponds to occurences of $x$ except when $x$ is followed by $S$. If $h\ge 1$, then this is easy, as $xS$ for any color of $S$ is an NES type 1 path, so by Lemma~\ref{lemma:nes1wordcount}, its occurrences at $y=j$ are counted by $\frac{1}{j_3}{(\frac{j_1}{j_3})}^j j_3*s_{n-\ell,h+2j}^*$. This gives us $\frac{1}{j_3}{(\frac{j_1}{j_3})}^j (s_{n-\ell+1,1+h+2j}^* - j_3*s_{n-\ell,h+2j}^*)$ as desired. Meanwhile, if $h=0$, then we instead count $x^*=N^{i_3+1} E^{i_2} S^{i_1}$ in $NES^*$ paths from $(0,0)$ to $(n,0)$ at $y=j-1$. This gives us $\frac{j_3}{j_1} {(\frac{j_3}{j_1})}^{j-1} s_{n-\ell,2j}$ by Lemma~\ref{lemma:nes1wordcount}. It is easy to see that ${(\frac{j_3}{j_1})}^j s_{n-\ell,2j}={(\frac{j_1}{j_3})}^j s_{n-\ell,2j}^*$. So, we again obtain the formula $\frac{1}{j_3}{(\frac{j_1}{j_3})}^j (s_{n-\ell+1,1+h+2j}^* - j_3*s_{n-\ell,h+2j}^*)$ as desired.

    Now, consider the case where $w$ starts on $S$ but does not end on $N$. In this case, we want to count occurences of $x$ except when preceded by $N$. Since $x$ is an NES type 1 path, $Nx$ is as well, so by Lemma~\ref{lemma:nes1wordcount}, we count occurrences of $Nx$ at $y=j-1$ for each of the $j_1$ colors of $N$. This gives us our desired formula of $\frac{1}{j_3}{(\frac{j_1}{j_3})}^j (s_{n-\ell+1,1+h+2j}^* - j_1*s_{n-\ell,h+2j}^*)$.

    Finally, if $w$ starts on $S$ and ends on $N$, we count occurrences of $x$, then ignore occurences of $xS$ and $Nx$. However, this removes occurences of $NxS$ twice, so we must again add that value to obtain the correct formula. Using Lemma~\ref{lemma:nes1wordcount}, this gives us the formula above that we desired.
\end{proof}

\begin{corollary}\label{cor:ignoringheight}
    Let $w$ be an NES type 3 path from $(0,0)$ to $(\ell,h)$ with a lowest point of $y=-k$, where $h\ge 0$. The number of occurences of $w$ in NES type 1 paths from $(0,0)$ to $(n,0)$ at height $y\ge j\ge k$ is:
    \begin{enumerate}
        \item[(i)] $\frac{1}{j_3}{(\frac{j_1}{j_3})}^j g_{n-\ell+1,1+h+2j}^*$ if $w$ does not start on $S$ or end on $N$.
        \item[(ii)] $\frac{1}{j_3}{(\frac{j_1}{j_3})}^j (g_{n-\ell+1,1+h+2j}^* - j_3*g_{n-\ell,h+2j}^*)$ if $w$ does not start on $S$ but does end on $N$. 
        \item[(iii)] $\frac{1}{j_3}{(\frac{j_1}{j_3})}^j (g_{n-\ell+1,1+h+2j}^* - j_1*g_{n-\ell,h+2j}^*)$ if $w$ starts on $S$ but does not end on $N$.
        \item[(iv)]$\frac{1}{j_3}{(\frac{j_1}{j_3})}^j (g_{n-\ell+1,1+h+2j}^* - (j_1 + j_3)g_{n-\ell,h+2j}^* + j_1j_3*g_{n-\ell-1,h+2j-1})$ if $w$ starts on $S$ and ends on $N$.  
    \end{enumerate}
\end{corollary}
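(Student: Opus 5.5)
The plan is to sum the height-exact counts of Theorem~\ref{thm:countingwords} over all heights $y=i$ with $i\ge j$. Since $j\ge k$, every such $i$ also satisfies $i\ge k$, so Theorem~\ref{thm:countingwords} applies at each height. Occurrences at distinct starting heights are disjoint, so the number of occurrences at height $y\ge j$ is simply $\sum_{i\ge j}$ of the corresponding expression. This mirrors the proof of the corollary to Lemma~\ref{lemma:nes1wordcount}.

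The key identity is the $NES^*$ version of the corollary to Proposition~\ref{prop:lowestpointenumeration}. Applied to $(j_3,j_2,j_1)$-colored paths, the ratio $\frac{j_3}{j_1}$ becomes $\frac{j_1}{j_3}$, giving
\[g^*_{m,r}=\sum_{t\ge 0}\Big(\frac{j_1}{j_3}\Big)^t s^*_{m,r+2t}.\]
For case (i) we write $i=j+t$. Then
\[\sum_{i\ge j}\frac{1}{j_3}\Big(\frac{j_1}{j_3}\Big)^i s^*_{n-\ell+1,1+h+2i}=\frac{1}{j_3}\Big(\frac{j_1}{j_3}\Big)^j\sum_{t\ge 0}\Big(\frac{j_1}{j_3}\Big)^t s^*_{n-\ell+1,1+h+2j+2t},\]
and the inner sum is $g^*_{n-\ell+1,1+h+2j}$.

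Cases (ii) and (iii) are handled the same way. By linearity we split each expression into its two terms. Each term has the form $(\frac{j_1}{j_3})^i s^*_{m,r+2i}$ with $m$ fixed and second index shifting by $2$ per unit of $i$, so each collapses to $(\frac{j_1}{j_3})^j g^*_{m,r+2j}$.

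Case (iv) contains the third term $j_1j_3\, s_{n-\ell-1,h+2j-1}$, written without a star. This is where I expect the only real obstacle. I would first check whether the unstarred $s$ is a typo for $s^*$, which is what the derivation via $NxS$ and Lemma~\ref{lemma:nes1wordcount} suggests. If so, the term collapses to $g^*_{n-\ell-1,h+2j-1}$ exactly as in the other cases. If the unstarred form is intended, I would use the conversion noted in the proof of Theorem~\ref{thm:countingwords}, namely $(\frac{j_3}{j_1})^i s_{m,2i+c}$ versus $(\frac{j_1}{j_3})^i s^*_{m,\cdot}$ via Proposition~\ref{prop:sdualbijection}-type duality, to rewrite it in starred form before summing.

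For every case, all sums are finite because $s^*_{m,r}=0$ once $r>m$, so no convergence issues arise. The statement then follows term by term.
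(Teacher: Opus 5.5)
Your summation is the intended argument. The paper gives no separate proof, but the parallel corollary after Lemma~\ref{lemma:nes1wordcount} is proved exactly by summing exact-height counts over heights $\ge j$. For (i) and (ii), your collapse via $g^*_{m,r}=\sum_{t\ge 0}(\frac{j_1}{j_3})^t s^*_{m,r+2t}$ is complete.

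In (iv), however, only the starred reading can work. With the literal unstarred term you would need $\sum_{t\ge 0}(\frac{j_1}{j_3})^t s_{m,r+2t}$ to equal $g_{m,r}=\sum_{t\ge 0}(\frac{j_3}{j_1})^t s_{m,r+2t}$, and the weights are reciprocal, so this fails in general. Converting with $s_{m,r}=(\frac{j_1}{j_3})^r s^*_{m,r}$ (valid because $S^*=(s(z),j_3zs(z))$ with the same $s$) only produces weights $(\frac{j_1}{j_3})^{3t}$. So your second branch cannot reach the stated formula.

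The more serious problem is upstream. The exact-height inputs you sum in (iii) and (iv) are wrong when $j_1\neq j_3$, so (iii) and (iv) as stated are false in general, and no summation can prove them. In the proof of Theorem~\ref{thm:countingwords}(iii), the discarded occurrences are those of $Nx$ at height $j-1$. Lemma~\ref{lemma:nes1wordcount} gives $\frac{1}{j_3}(\frac{j_1}{j_3})^{j-1}s^*_{n-\ell,h+2j}$ for each of the $j_1$ colours of $N$, which totals $\frac{1}{j_3}(\frac{j_1}{j_3})^{j}\,j_3\,s^*_{n-\ell,h+2j}$. So the coefficient is $j_3$, not $j_1$. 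Likewise, $NxS$ at height $j-1$ contributes $\frac{1}{j_3}(\frac{j_1}{j_3})^{j}\,j_3^2\,s^*_{n-\ell-1,h+2j-1}$.

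Here is a concrete check. Take $(j_1,j_2,j_3)=(2,1,1)$, $w=SNE$ with fixed colours ($\ell=3$, $h=0$, $k=1$), $j=1$ and $n=6$. The only occurrences are in $NE\,SNE\,S$, with two colours for the first $N$, so the count is $2$. Formula (iii) instead gives $2(g^*_{4,3}-2g^*_{3,2})=2(4-6)=-4$. For $w=SN$ with the same data, the true count is again $2$, while (iv) gives $2(12-24+20)=16$.

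If you feed the corrected exact-height counts into your argument, it goes through verbatim and yields
\[
\tfrac{1}{j_3}\bigl(\tfrac{j_1}{j_3}\bigr)^j\bigl(g^*_{n-\ell+1,1+h+2j}-j_3\,g^*_{n-\ell,h+2j}\bigr)
\quad\text{and}\quad
\tfrac{1}{j_3}\bigl(\tfrac{j_1}{j_3}\bigr)^j\bigl(g^*_{n-\ell+1,1+h+2j}-2j_3\,g^*_{n-\ell,h+2j}+j_3^2\,g^*_{n-\ell-1,h+2j-1}\bigr)
\]
for (iii) and (iv) respectively. The stated versions agree with these only when $j_1=j_3$, which covers every use of (iii) and (iv) in Section~\ref{sec:applications}.
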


\begin{remark}
    Counting occurrences of $w$ at $y\ge k$ counts all occurrences of $w$ regardless of height.
\end{remark}

\begin{proposition}\label{prop:countingwordsidentity}
    $s_{n,k}^*=g_{n,k}^*-\frac{j_1}{j_3} g_{n,k+2}^*$.
\end{proposition}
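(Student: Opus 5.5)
This is Corollary~\ref{cor:alternateexplicit} transported to the starred setting, and I would derive it directly from the starred analogue of the lowest-point decomposition.

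\textbf{Step 1: the starred corollary.} Proposition~\ref{prop:lowestpointenumeration} and its corollary hold for an arbitrary fixed coloring. Applied to $(j_3,j_2,j_1)$-colored paths, the roles of $j_1$ and $j_3$ swap, so the ratio $\frac{j_3}{j_1}$ becomes $\frac{j_1}{j_3}$. This gives
\[g_{n,k}^*=\sum_{j\ge 0}\Big(\frac{j_1}{j_3}\Big)^j s_{n,k+2j}^*,\]
where the sum is finite because $s_{n,m}^*=0$ for $m>n$. Combinatorially, each $NES^*$ type 3 path from $(0,0)$ to $(n,k)$ is classified by its unique lowest height $y=-j$. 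Proposition~\ref{prop:lowestpointenumeration}, with colors $(j_3,j_2,j_1)$, counts those with lowest height $-j$ as $(\frac{j_1}{j_3})^j s_{n,k+2j}^*$.

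\textbf{Step 2: shift and telescope.} Replacing $k$ by $k+2$ gives
\[\frac{j_1}{j_3}g_{n,k+2}^*=\sum_{j\ge 0}\Big(\frac{j_1}{j_3}\Big)^{j+1}s_{n,k+2+2j}^*=\sum_{j\ge 1}\Big(\frac{j_1}{j_3}\Big)^{j}s_{n,k+2j}^*.\]
Subtracting this from the expression for $g_{n,k}^*$ leaves only the $j=0$ term, namely $s_{n,k}^*$, which is the claim.

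\textbf{Alternative check.} One could instead apply Corollary~\ref{cor:alternateexplicit} to the $(j_3,j_2,j_1)$-colored arrays, where $\frac{j_3}{j_1}$ becomes $\frac{j_1}{j_3}$. At the level of generating functions, $G^*=\big(\frac{s^*}{1-j_1j_3z^2(s^*)^2},\,j_3zs^*\big)$, and by the remark after Proposition~\ref{prop:generatingexpression}, $s^*=s$ as a column generating function.

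\textbf{Main obstacle.} The only delicate point is justifying that the earlier results apply verbatim with the coloring $(j_3,j_2,j_1)$. The paper fixes $j_1,j_2,j_3$ arbitrarily, so this is a relabeling. The remaining care is tracking that the ratio inverts correctly.
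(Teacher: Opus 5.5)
Your proof is correct, but it takes a different route from the paper's.

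The paper does not relabel Proposition~\ref{prop:lowestpointenumeration}. Instead it counts occurrences of a fixed-colored pattern $w=N^{k+1}E$ in NES type 1 paths from $(0,0)$ to $(n+k+1,0)$ in two ways:
\begin{itemize}
\item Corollary~\ref{cor:ignoringheight} gives $\frac{1}{j_3}g^*_{n,k+2}$ occurrences.
\item An insert-$N$, rotate, and cut construction uses the Cycle Lemma (Proposition~\ref{prop:cyclelemma}). It matches each occurrence, together with one of $j_1$ colors for the inserted $N$, to an NES type 3 path $x$ from $(0,0)$ to $(n,-k)$ for which $N^kEx$ is \emph{not} of type 1. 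There are $g^*_{n,k}-s^*_{n,k}$ such $x$.
\end{itemize}
Equating the two counts gives $\frac{j_1}{j_3}g^*_{n,k+2}=g^*_{n,k}-s^*_{n,k}$. That is why the paper presents this as an alternate combinatorial proof of Corollary~\ref{cor:alternateexplicit}.

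What the paper's route buys is a pattern-occurrence interpretation of $g^*_{n,k}-s^*_{n,k}$. Your lowest-point decomposition plus telescoping is shorter and more transparent, and it uses strictly less. Corollary~\ref{cor:ignoringheight} is obtained by summing the $s^*$-counts of Lemma~\ref{lemma:nes1wordcount} over all heights. Turning that sum into a single $g^*$ entry requires exactly your starred identity $g^*_{m,r}=\sum_{i\ge 0}(\frac{j_1}{j_3})^i s^*_{m,r+2i}$. So the paper's argument implicitly passes through the very formula you telescope, and your proof isolates the essential input.

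Your justification of the relabeling $(j_1,j_2,j_3)\mapsto(j_3,j_2,j_1)$ is sound: the earlier results are proved for an arbitrary fixed triple. The paper itself applies Proposition~\ref{prop:lowestpointenumeration} to $NES^*$ paths with the swapped ratio $(\frac{j_1}{j_3})^j$ in the proof of Lemma~\ref{lemma:nes1wordcount}.
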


\begin{proof}
    Let $w=N^{k+1} E$ with some colors. By Corollary~\ref{cor:ignoringheight}, the occurrences of $w$ in NES type 1 paths from $(0,0)$ to $(n+k+1,0)$ are enumerated by $\frac{1}{j_3}*g_{n,k+2}^*$.

    Now, take some NES type 1 path $pwq$ from $(0,0)$ to $(n+k+1,0)$. Add an $N$ step of some color to the beginning of the path to obtain $Npwq$ and then perform a circular rotation to obtain $wqNp$. Finally, remove $w$ to obtain $qNp$. Since we may choose $j_1$ different colors for our added $N$ step, this gives us $j_1$ distinct paths of the form $qNp$. In addition, we note that $qNp$ must be an NES type 3 path from $(0,0)$ to $(n,-k)$. Furthermore, we know that $wqNp$ is an NES type 3 path from $(0,0)$ to $(n+k+2,1)$, with a distinct circular rotation $Npwq$ which is a dominating sequence. By the Proposition~\ref{prop:cyclelemma}, $Npwq$ is the only circular rotation of $wqNp$ which may be a dominating sequence, so we know that $wqNp$ is not a dominating sequence. Equivalently, if $w=Nw'$, then $w' qNp$ is not an NES type 1 path. 

    Next, consider some arbitrary NES type 3 path $x$ from $(0,0)$ to $(n,-k)$. Suppose that $w' x$ is not an NES type 1 path. Then, $wx$ is an NES type 3 path from $(0,0)$ to $(n+k+2,1)$ which is not itself a dominating sequence. As such, by Proposition~\ref{prop:cyclelemma}, there exists some unique circular rotation $Npwq$ of $wx$ which is a dominating sequence. Then, $pwq$ is an NES type 1 path from $(0,0)$ to $(n+k+1,0)$ where we have chosen an occurence of $w$.

    It is easy to see that from the two processes outlined above, each occurence of $w$ in NES type 1 paths from $(0,0)$ to $(n+k+1,0)$ corresponds to exactly $j_1$ NES type 3 paths $x$ from $(0,0)$ to $(n,-k)$ where $w' x$ is not an NES type 1 path. By Proposition~\ref{prop:dualbijection}, there are $g^*_{n,k}$ NES type 3 paths from $(0,0)$ to $(n,-k)$. In addition, it is easy to see that NES type 3 paths $x$ from $(0,0)$ to $(n,-k)$ where $w' x$ is an NES type 1 path are in bijection with NES type 1 paths from $(0,0)$ to $(n+k+1,0)$ starting with $w'$. Using the bijection given by Proposition~\ref{prop:sdualbijection}, these paths are in bijection with $NES^*$ type 1 paths from $(0,0)$ to $(n+k+1,0)$ ending in $(w')^*$. By Proposition~\ref{prop:spathends}, there are $s^*_{n,k}$ such paths. As such, we conclude that there are $g^*_{n,k}-s^*_{n,k}$ NES type 3 paths $x$ from $(0,0)$ to $(n,-k)$ where $w' x$ is not an NES type 1 path. 

    Then, we conclude that there are exactly $\frac{1}{j_1}(g^*_{n,k}-s^*_{n,k})$ occurences of $w$ in NES type 1 paths from $(0,0)$ to $(n+k+1,0)$. This means that
    \[\frac{1}{j_3}*g_{n,k+2}^* = \frac{1}{j_1}(g^*_{n,k}-s^*_{n,k})\]

    And so
    \[s_{n,k}^* = g_{n,k}^* - \frac{j_1}{j_3}*g_{n,k+2}^*\]

    as desired.
\end{proof}

\begin{remark}
     This provides an alternate combinatorial proof for Corollary~\ref{cor:alternateexplicit}.
\end{remark}

\begin{proposition}\label{prop:stepidentity}
    For $n\ge 1$, $n*s_{n,0} = 2(\frac{j_3}{j_1} g_{n,2}-j_3*g_{n-1,1}) + \frac{j_2}{j_1}g_{n,1}$.
\end{proposition}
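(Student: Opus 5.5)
We would prove the identity by double counting: count, over all NES type 1 paths from $(0,0)$ to $(n,0)$, the total number of steps in two ways. Each such path has exactly $n$ steps, so the total is $n*s_{n,0}$. On the other hand, each such path ends at height $0$, so it has as many $N$ steps as $S$ steps. Hence the total number of steps equals twice the total number of $N$ steps plus the total number of $E$ steps, summed over all these paths. Both of these totals are pattern statistics for one-step patterns, so we will compute them with Corollary~\ref{cor:ignoringheight}, taking $j=k=0$. By the remark following that corollary, this counts all occurrences regardless of height.

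\emph{Occurrences of $E$.} Fix a color of $E$ and let $w=E$. Then $\ell=1$, $h=0$, the lowest point is $0$, and $w$ neither starts on $S$ nor ends on $N$. Case (i) of Corollary~\ref{cor:ignoringheight} gives $\frac{1}{j_3}g^*_{n,1}$ occurrences, and summing over the $j_2$ colors gives $\frac{j_2}{j_3}g^*_{n,1}$.

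\emph{Occurrences of $N$.} Fix a color of $N$ and let $w=N$. Then $\ell=1$, $h=1$, the lowest point is $0$, and $w$ does not start on $S$ but ends on $N$. Case (ii) gives $\frac{1}{j_3}(g^*_{n,2}-j_3 g^*_{n-1,1})$ occurrences, and multiplying by $j_1$ accounts for the colors. We use $N$ rather than $S$ because $w=S$ has $h=-1<0$ and so is not covered by the corollary.

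The remaining step is to convert from $g^*$ to $g$. We will show that
\[g^*_{n,k}=\left(\frac{j_3}{j_1}\right)^k g_{n,k}.\]
The argument uses the same equivalence-class device as in Proposition~\ref{prop:lowestpointenumeration}. Fix an uncolored NES type 3 path to $(n,k)$ with $a$ steps of type $N$, $b$ of type $E$ and $a-k$ of type $S$. Its $(j_1,j_2,j_3)$-colorings number $j_1^a j_2^b j_3^{a-k}$, while its $(j_3,j_2,j_1)$-colorings number $j_3^a j_2^b j_1^{a-k}$. The ratio of the second to the first is $(j_3/j_1)^k$, independent of the path, and summing over uncolored paths gives the claim.

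Substituting this, the $E$ count becomes $\frac{j_2}{j_3}\cdot\frac{j_3}{j_1}g_{n,1}=\frac{j_2}{j_1}g_{n,1}$. The $N$ count becomes
\[\frac{j_1}{j_3}\left(\frac{j_3^2}{j_1^2}g_{n,2}-j_3\frac{j_3}{j_1}g_{n-1,1}\right)=\frac{j_3}{j_1}g_{n,2}-j_3 g_{n-1,1}.\]
Doubling the $N$ count and adding the $E$ count yields exactly the right-hand side of the statement.

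The main point needing care is confirming the hypotheses of Corollary~\ref{cor:ignoringheight} for these one-step words: $h\ge 0$, $k=0$, and $n\ge 1$, so that all indices such as $g_{n-1,1}$ make sense. We also need that Lemma~\ref{lemma:nes1wordcount} and the corollary count occurrences of a word with a fixed coloring, which is why the factors $j_2$ and $j_1$ must be inserted by hand. Once these are checked, the argument is a direct computation.
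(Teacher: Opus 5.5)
Your proposal is correct and follows the paper's proof essentially step for step. Both arguments double count the steps as $n*s_{n,0}$, get the $N$ count from case (ii) and the $E$ count from case (i) of Corollary~\ref{cor:ignoringheight}, multiply by $j_1$ and $j_2$ for the colors, and use that the $S$ count equals the $N$ count. Your coloring-class argument for $g^*_{n,k}=(j_3/j_1)^k g_{n,k}$ is a welcome addition, since the paper makes this conversion without justification.
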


\begin{proof}
    By definition, $s_{n,0}$ is the number of NES type 1 paths from $(0,0)$ to $(n,0)$. As such, $n*s_{n,0}$ is the total number of steps in these NES type 1 paths. We may equivalently take the sum of the numbers of $N$, $E$, and $S$ steps in order to compute the same value.

    Consider $w=N$ for some color of $N$. Then, $w$ is an NES type 3 path from $(0,0)$ to $(1,1)$ with a lowest point at $y=0$. By Corollary~\ref{cor:ignoringheight}, the number of occurences of $w$ in NES type 1 paths from $(0,0)$ to $(n,0)$ is equal to $\frac{1}{j_3} (g^*_{n,2}-j_3*g^*_{n-1,1})$. We equivalently may express this as $\frac{j_3}{j_1^2} g_{n,2}-\frac{j_3}{j_1} g_{n-1,1}$. Then, we have $j_1$ colors of $N$ steps, so in total, we have $\frac{j_3}{j_1} g_{n,2}-j_3*g_{n-1,1}$ $N$ steps.

    Next, consider $w=E$ for some color of $E$. Then, $w$ is a path from $(0,0)$ to $(1,0)$ with a lowest point at $y=0$, so by Corollary~\ref{cor:ignoringheight}, the number of occurences of $w$ is $\frac{1}{j_3} g^*_{n,1}=\frac{1}{j_1}g_{n,1}$. Since we have $j_2$ colors for $E$ steps, this gives us $\frac{j_2}{j_1}g_{n,1}$ total steps.

    Finally, we know that in any $NES$ type 1 path from $(0,0)$ to $(n,0)$, there must be an equal number of $N$ and $S$ steps. As a result, there are $\frac{j_3}{j_1} g_{n,2}-j_3*g_{n-1,1}$ $S$ steps in total. We now conclude that
    \[n*s_{n,0} = 2(\frac{j_3}{j_1} g_{n,2}-j_3*g_{n-1,1}) + \frac{j_2}{j_1}g_{n,1}\]

    as desired.
\end{proof}

\begin{remark}
    This gives us a new formula for $s_{n,0}$ for all $n\ge 1$.
\end{remark}

\section{Applications}\label{sec:applications}
This section provides various explicit formulas which are easy to obtain using Theorem~\ref{thm:countingwords}. These formulas denote the number of occurences of various patterns in peakless Motzkin paths of length $n$ and correspond to sequences in~\cite{oeis}, denoted by the ascension number AXXXXXX. Furthermore, for the remainder of the paper, unless otherwise stated, we  will use $j_1=j_2=j_3=1$.

\begin{table}[h!]
\centering
\setlength\extrarowheight{1pt}
\begin{tabular}{|l|l|l|l|l|l|l|}
\hline
Pattern & Sequence & Formula & OEIS \\
\hline
$E$ & 1, 2, 4, 10, 24, \ldots & $g_{n,1}$ & A110236  \\
\hline
$NE$ & 1, 3, 7, 18, 46, \ldots & $g_{n-1,2}$ & A114713
 \\
\hline
$N$ & 1, 3, 8, 22, 58, \ldots & $g_{n,2}-g_{n-1,1}$ & A110239
 \\
\hline
$N+E$ & 1, 2, 5, 13, 32, \ldots & $g_{n+1,1}-g_{n,0}$ & A110320
 \\
\hline
$E$, $y=0$ & 1, 2, 3, 6, 13, \ldots & $s_{n,1}$ & A089735
 \\
\hline
$NE$, $y=0$ & 1, 3, 6, 13, 30, \ldots & $s_{n-1,2}$ & A098075
 \\
\hline
$N$, $y=0$ & 1, 3, 7, 17, 41, \ldots & $s_{n,2}-s_{n-1,1}$ & A089737
 \\
\hline
$SN$ & 1, 4, 13, 40, 116, \ldots & $g_{n-1,3}-2g_{n-2,2}+g_{n-3,1}$ & $\Delta$ of A190163
 \\
\hline
$NE^j N$ & 1, 5, 17, 52, 150, \ldots & $\sum_j (g_{n-j-1,3}-g_{n-j-2,2})$ & A187257
 \\
\hline
$NE^j S$ & 1, 3, 7, 17, 41, \ldots & $\sum_j g_{n-j-1,1}$ & A089742
 \\
\hline
$N+E+S$, $y=0$ & 1, 2, 5, 12, 27, \ldots & $2(s_{n,2}-s_{n-1,1})+s_{n,1}$ & A128096
 \\
\hline$N+E+S$ & 1, 2, 6, 16, 40, \ldots & $2(g_{n,2}-g_{n-1,1})+g_{n,1}$ & $n*A004148$
 \\
\hline
\end{tabular}
\end{table}

Note that any formula counting occurrences of $w$ likewise counts occurences of $w^*$. For instance, the formula for $N$ likewise counts occurences of $S$. We further note that taking the formulas above to compute $N+E+S$, we obtain a new formula for $n*s_{n,0}$, which is given for $j_1=j_2=j_3=1$ above and for arbitrary $j_1,j_2,j_3\ge 1$ in Proposition~\ref{prop:stepidentity}. Finally, we note that using Theorem~\ref{thm:countingwords}, we find that $s_{n,1+2j}$ counts peakless Motzkin paths of length $n$ with exactly 1 marked $E$ step at height $y=j$. This interestingly corresponds to peakless Motzkin paths with $1+2j$ marked $E$ steps at $y=0$, using an interpretation of RNA Array I explored in~\cite{markedlevelsteps}.

\subsection{Order Ideals and Whitney Numbers}
Next, we investigate certain connections between NES type 3 paths with $j_1=j_2=j_3=1$ and the Whitney numbers of certain distributive lattices by defining a partial order on NES type 3 paths ending at $x=n$. We will see that this partial order forms a lattice isomorphic to the lattice of order ideals of the fence of order $2n$ which is defined and investigated in~\cite{whitneylattices}. This investigation is motivated by the fact that the sequence $(g_{n,0})_{n\ge 0}$ is denoted A051286 in~\cite{oeis} and represents Whitney numbers of the lattice of the ideals of the fence of order $2n$.

\begin{definition}
    We define a partial ordering on NES paths of type 3 ending at $x=n$ as follows. Order steps by $S < E < N$. Then, taking $p$ and $q$ in their linear forms, $p \le q$ iff the $i$th step of $p$ is always less than or equal to the $i$th step of $q$.
\end{definition}

\begin{proposition}
    NES paths of type 3 ending at $x=n$ form a lattice.
\end{proposition}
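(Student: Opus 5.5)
The plan is to realize the set of NES type 3 paths ending at $x=n$ as a sublattice of a product of chains. Since $j_1=j_2=j_3=1$ in this section, a path ending at $x=n$ is exactly a word of length $n$ over $\{S,E,N\}$. Recall that type 3 paths impose no height constraint, so the endpoint height is free. The defining condition is therefore only that the pattern $NS$ never occurs.

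First, consider the set $\{S<E<N\}^n$ of all words of length $n$ with the componentwise order from the definition above. This is a product of $n$ three-element chains, so it is a (distributive) lattice. Its meet $p\wedge q$ is the componentwise minimum, and its join $p\vee q$ is the componentwise maximum. It therefore suffices to show that the type 3 paths are closed under these two operations. Their meet and join in the subposet are then the same as in the ambient product, so the subposet is a lattice, and in fact a distributive one.

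Closure under meet: suppose $p,q$ avoid $NS$, but $p\wedge q$ has $N$ in position $i$ and $S$ in position $i+1$. Since $\min(p_i,q_i)=N$ and $N$ is the maximum element, we get $p_i=q_i=N$. Since $\min(p_{i+1},q_{i+1})=S$, one of $p,q$, say $p$, has $p_{i+1}=S$. Then $p$ contains $NS$ at positions $i,i+1$, a contradiction.

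Closure under join: the argument is symmetric. If $p\vee q$ has $N$ at position $i$ and $S$ at position $i+1$, then $S$ being the minimum forces $p_{i+1}=q_{i+1}=S$. Also, one of $p_i,q_i$ equals $N$, and that path then contains $NS$, a contradiction.

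No step here is a real obstacle. The only point needing care is confirming that the definition imposes no other constraint, such as staying at $y\ge 0$ or fixing the endpoint height, that could fail to be preserved by componentwise min and max. For type 3 paths there is none: the starting point is fixed at $(0,0)$ and length $n$ is automatic, so the argument goes through as stated.
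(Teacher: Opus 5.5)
Your proof is correct and uses the same componentwise max/min construction as the paper, but it is more complete. The paper only checks the least-upper-bound and greatest-lower-bound properties of the componentwise max and min; it never verifies, as you do, that these words again avoid $NS$ and so lie in the poset at all. Your framing as a sublattice of $\{S<E<N\}^n$ also gives distributivity directly, which the paper only gets implicitly through the fence isomorphism.

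Your reading that the endpoint height is unrestricted is the intended one, since the paper itself uses type 3 paths ending at $(n,-k)$ and the fence isomorphism needs all heights. It is also essential: under the literal ``$k\ge 0$'' of the definition, the case $n=2$ already has two minimal elements, $SN$ and $EE$, so the statement would fail.
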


\begin{proof}
    We have a partial ordering on NES type 3 paths ending at $x=n$, so it is sufficient to show that given two paths $p$ and $q$, $p\lor q$ and $p\land q$ are well defined.

    Consider $p$ and $q$ in their linear forms. Order steps by $S < E < N$. Then, generate $p\lor q$ by taking both $p$ and $q$ in their linear forms. We define the $i$th step in the linear form of $p\lor q$ as the greatest of the $i$th steps of $p$ and $q$. It is easy to see that $p,q \le p\lor q$. Furthermore, if $p,q \le r$, then the $i$th step of $r$ is greater than or equal to both the $i$th steps of $p$ and $q$ by definition. By definition, it then follows that $p\lor q\le r$. As such, we have properly defined $p\lor q$.

    Likewise, we define $p\land q$ by taking the smaller of the $i$th steps of $p$ and $q$. An entirely analogous argument holds. Since $p\lor q$ and $p\land q$ are always well-defined, we have a lattice.
\end{proof}

\begin{definition}
    We define the fence of order $2n$ as a partial ordering of $\{1,\ldots,2n\}$ where for all $1\le i,j\le 2n$, if $i$ is even and $j=i\pm 1$, $i < j$.
\end{definition}

\begin{definition}
    We define the crown of order $2n$ as a partial ordering of $\{1,\ldots,2n\}$ where for all even $i$, $i\pm 1 < i$, where we take $i\pm 1$ mod $2n$.
\end{definition}

\begin{definition}
    Let $P$ be a partially ordered set. Let $I\subset P$. If for all $y\in I$ and $z\in P$ such that $z\le y$, $z\in I$, we call $I$ an order ideal of $P$.
\end{definition}

\begin{remark}
    The fence and crown and their order ideals are investigated in~\cite{whitneylattices}.
\end{remark}

\begin{example}
    $\{1 < 2 > 3 < 4 > 5 < 6\}$ is the fence of order 6.
\end{example}

\begin{example}
    $\{1 < 2 > 3 < 4 > 5 < 6 > 1\}$ is the crown of order 6.
\end{example}

\begin{proposition}
    NES paths of type 3 from $(0,0)$ to $(n,k)$ correspond to order ideals of size $n+k$ for the fence of order $2n$.
\end{proposition}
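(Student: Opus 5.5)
The plan is to build an explicit bijection. It pairs the $2n$ elements of the fence into $n$ consecutive blocks $\{2i-1,2i\}$ and reads one step of the path off each block. In the fence of order $2n$ every even element is minimal and lies below its odd neighbours. So an order ideal $I$, restricted to a block $\{2i-1,2i\}$, is one of three sets: $\emptyset$, $\{2i\}$, or $\{2i-1,2i\}$. The set $\{2i-1\}$ alone is impossible, because $2i<2i-1$. We encode these three options by $S$, $E$, $N$ respectively, so a block contributes $0$, $1$ or $2$ elements to $I$.

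The only further constraint on $I$ comes from the relation $2i-2<2i-1$ for $i\ge 2$. If block $i$ is $N$, then $2i-1\in I$, which forces $2i-2\in I$. That means block $i-1$ is $E$ or $N$, not $S$. Hence the subsets of the fence that are order ideals correspond exactly to words $b_1b_2\cdots b_n$ in $\{N,E,S\}$ that avoid the factor $SN$. I would check both directions explicitly:
\begin{enumerate}
\item[(a)] every ideal yields such a word;
\item[(b)] every $SN$-avoiding word yields a set closed downward under all covering relations $2i<2i\pm 1$. These are all the relations of the fence, so the set is an ideal.
\end{enumerate}

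Type 3 paths forbid $NS$ rather than $SN$. To fix this orientation, I send an NES type 3 path $p=p_1\cdots p_n$ to the word $b_i=p_{n+1-i}$, i.e.\ read the path backwards without flipping steps. Reversal turns an occurrence of $NS$ into one of $SN$ and vice versa. So $p$ avoids $NS$ if and only if $b$ avoids $SN$, and reversal is clearly invertible. Note that the fence reversal $i\mapsto 2n+1-i$ is not an automorphism, since it swaps parity; this is why I reverse the step order on the path side instead.

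Finally, I compute the size of the ideal. It is $2\#N+\#E=(\#N+\#E+\#S)+(\#N-\#S)=n+k$, where $k$ is the endpoint height of $p$. This gives the stated correspondence between type 3 paths to $(n,k)$ and ideals of size $n+k$. The main obstacle is only bookkeeping. First, the direction of the fence relations must be matched with the correct forbidden pattern; this is what forces the reversal. Second, the boundary blocks need checking: block $1$ has no relation to a block $0$, and element $2n$ is minimal. Both cases are consistent with the encoding.
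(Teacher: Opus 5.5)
Your proof is correct and is essentially the paper's argument. The paper likewise reads one step off each block $\{2i-1,2i\}$, encoding $S\mapsto 00$, $E\mapsto 10$, $N\mapsto 11$. It uses the relation between adjacent blocks to encode the forbidden pattern, and it gets the size as twice the number of $N$ steps plus the number of $E$ steps, which is $n+k$.

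The only difference is orientation. The paper's proof, like its example $1<2>3<\cdots$, takes even elements to be maximal (it checks $i\pm 1<i$ for even $i$), which contradicts its own written definition. Under that orientation $NS$ is forbidden directly and no reversal is needed. You follow the literal definition, with even elements minimal, which turns the constraint into avoiding $SN$ and is why you need the reversal. The two bijections differ exactly by the poset isomorphism $i\mapsto 2n+1-i$ between the two orientations. Your version also checks the converse direction, which the paper dismisses as easy.
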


\begin{proof}
    Consider some NES type 3 path from $(0,0)$ to $(n,k)$ in its linear form. Then, map every $S$ step to $00$, every $E$ step to $10$, and every $N$ step to $11$. Then, we have a string of 0s and 1s of length $2n$. We define an order ideal $I$ of the fence of order $2n$ so that $i\in I$ iff the $i$th entry of this string is equal to 1. If $i$ is even, then since 00, 10, and 11 are the only options, this satisfies our requirement from $i-1 < i$ for the order ideal. Likewise, as $NS$ is impossible, $i+1 < i$ is satisfied. Finally, we note that each $S$ step contributes 2 elements not in $I$, each $E$ step contributes 1 in $I$ and one not, and each $N$ step contributes 2 in $I$. As such, $|I|-(2n-|I|)=2(|I|-n)$ counts double the difference between $N$ and $S$ steps, which we know is $k$. As $2(|I|-n)=2k$, $|I|=n+k$. Thus, we have constructed an order ideal of size $n+k$. It is easy to see that our process is entirely reversible, so this is a bijection.
\end{proof}

\begin{remark}
    So, $g_{n,k}$ counts the number of order ideals of size $n+k$ for the fence of order $2n$. Note that this bijection also preserves the lattice structure and so defines an isomorphism of lattices.
\end{remark}

\begin{example}
    Take the path $NESN$. This gives us the string $11100011$. From this, we obtain the order ideal $I=\{1,2,3,7,8\}$ for the fence of order 8.
\end{example}

\begin{proposition}
    There are $g_{n,k}-g_{n-2,k}$ order ideals of size $n+k$ for the crown of order $2n$.
\end{proposition}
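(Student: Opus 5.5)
The plan is to reduce the crown to the fence and then subtract the fence ideals that fail the one extra crown relation. Compare the two definitions. Every relation of the fence of order $2n$ also holds in the crown. The crown has exactly one extra cover relation, coming from the even element $i=2n$ and $i+1\equiv 1 \pmod{2n}$; it says $1<2n$.

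\textbf{Step 1: describe crown ideals inside fence ideals.} An order ideal of the crown is exactly an order ideal $I$ of the fence with the additional property that $2n\in I$ implies $1\in I$. So the number of crown ideals of size $n+k$ equals the number of fence ideals of size $n+k$, which is $g_{n,k}$ by the preceding proposition and remark, minus the number of fence ideals of size $n+k$ with $2n\in I$ and $1\notin I$.

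\textbf{Step 2: translate the bad ideals into paths.} Use the bijection from the preceding proposition, with $S\mapsto 00$, $E\mapsto 10$, $N\mapsto 11$. Position $1$ of the string is the first letter of the code of the first step, so $1\notin I$ exactly when the first step is $S$. Position $2n$ is the second letter of the code of the last step, so $2n\in I$ exactly when the last step is $N$. The bad ideals therefore correspond to NES type 3 paths from $(0,0)$ to $(n,k)$ that start with $S$ and end with $N$.

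\textbf{Step 3: count these paths.} I expect this to be the only point needing care. For $n\ge 2$, write such a path as $SwN$; since the $S$ and $N$ cancel in height, $w$ is an NES path from $(0,0)$ to $(n-2,k)$. I claim $w\mapsto SwN$ is a bijection between NES type 3 paths from $(0,0)$ to $(n-2,k)$ and the paths of Step 2.
\begin{enumerate}
\item[(a)] Removing the first and last steps of a peakless path leaves a peakless path, so $w$ is type 3.
\item[(b)] Conversely, prepending $S$ cannot create the pattern $NS$, because the new $S$ has no predecessor.
\item[(c)] Appending $N$ cannot create $NS$, because the new $N$ has no successor.
\item[(d)] Type 3 paths have no floor condition, so nothing else needs checking.
\end{enumerate}
Hence there are $g_{n-2,k}$ bad ideals. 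For $n\le 1$, no path can both start with $S$ and end with $N$ while ending at height $k\ge 0$ (for $n=1$ the first and last steps coincide). So the count is $0$ there, consistent with the convention $g_{m,k}=0$ for $m<0$.

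\textbf{Conclusion.} Subtracting gives $g_{n,k}-g_{n-2,k}$ order ideals of size $n+k$ for the crown of order $2n$. The main obstacle is the bookkeeping in Step 2: correctly identifying which crown relation is new and which positions of the string encode $1$ and $2n$. The rest is a direct application of the fence bijection.
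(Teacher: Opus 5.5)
Your proof is correct and follows the same route as the paper. In both, crown ideals are the fence ideals satisfying the one extra condition $2n\in I\Rightarrow 1\in I$, and the violators are the type 3 paths $SwN$, which are counted by $g_{n-2,k}$ after stripping the first and last steps. Your version simply supplies the details the paper calls ``easy'': which string positions encode $1$ and $2n$, why $SwN$ is peakless exactly when $w$ is, and the degenerate cases $n\le 1$.
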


\begin{proof}
    Let $I$ be an order ideal of order $n+k$ for the crown of order $2n$. The crown is a refinement of the fence, so $I$ is also an order ideal for the fence, which is counted by $g_{n,k}$. However, the crown also requires that $2n < 1$, which is equivalent to requiring that the corresponding path does not end on $N$ and start on $S$. It is easy to show that such paths are counted by $g_{n-2,k}$ by removing the $S$ from the start and the $N$ from the end. As such, these order ideals are counted by $g_{n,k}-g_{n-2,k}$.
\end{proof}

\begin{remark}
    This appears relevant for a conjecture given for A110320 in~\cite{oeis}.
\end{remark}

\begin{proposition}
    The order ideals of order $n+k$ for the fence of order $2n+1$ are counted by $g_{n+1,k-1}-g_{n,k-2}$.
\end{proposition}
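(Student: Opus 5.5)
The plan is to reduce to the even-order case already handled, by embedding the fence of order $2n+1$ into the fence of order $2n+2$. In the fence of order $2n+2$, the element $2n+2$ is even, so its only relation is $2n+1 < 2n+2$, and it is maximal. Removing it from the fence of order $2n+2$ leaves exactly the fence of order $2n+1$.

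First I would establish the correspondence of ideals. I claim that order ideals $I$ of the fence of order $2n+1$ are in bijection with order ideals $J$ of the fence of order $2n+2$ satisfying $2n+2\notin J$, via $J=I$, and that this bijection preserves size.
\begin{enumerate}
\item[(a)] Given $I$, the set $I$ is still downward closed in the larger fence. The only new element is $2n+2$, which lies below nothing, so no new condition arises.
\item[(b)] Conversely, if $J$ is an ideal of the larger fence with $2n+2\notin J$, then $J\subseteq\{1,\ldots,2n+1\}$. It is downward closed in the induced order, since the fence of order $2n+1$ is an induced subposet.
\end{enumerate}
Hence ideals of size $n+k$ of the fence of order $2n+1$ correspond to ideals of size $n+k=(n+1)+(k-1)$ of the fence of order $2(n+1)$ that avoid $2n+2$.

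Next I would apply the bijection from the previous proposition with $n$ replaced by $n+1$. Ideals of size $(n+1)+(k-1)$ of the fence of order $2(n+1)$ correspond to NES type 3 paths from $(0,0)$ to $(n+1,k-1)$. Under the encoding $S\mapsto 00$, $E\mapsto 10$, $N\mapsto 11$, the element $2n+2$ lies in the ideal exactly when the final pair is $11$, that is, exactly when the path ends in an $N$ step. So the desired count is $g_{n+1,k-1}$ minus the number of such paths ending in $N$. By Proposition~\ref{prop:gpathends} with $w=N$ (here $j_1=1$, and $N$ does not begin with $S$), the paths ending in $N$ are in bijection with NES type 3 paths to $(n,k-2)$. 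There are $g_{n,k-2}$ of these, which gives $g_{n+1,k-1}-g_{n,k-2}$.

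The main point needing care is that $k-1$ and $k-2$ may be negative, for example when $k\le 1$. The count $g_{n,k}$ is defined as a number of paths for every integer $k$, and the bijection with fence ideals in the previous proposition never used $k\ge 0$, so the argument goes through unchanged. I would remark that here $g$ is understood as the path count rather than as an entry of the matrix $G$. Alternatively, Proposition~\ref{prop:dualbijection} with $j_1=j_3$ gives $g_{n,-k}=g_{n,k}$, which expresses the formula in terms of entries of $G$ when $k-1$ or $k-2$ is negative.
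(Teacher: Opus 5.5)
Your proof is correct and is essentially the paper's own argument. Ideals of the fence of order $2n+1$ are identified with ideals of the fence of order $2n+2$ that avoid the maximal element $2n+2$, hence with type 3 paths to $(n+1,k-1)$ not ending in $N$, and the $g_{n,k-2}$ paths ending in $N$ are subtracted via Proposition~\ref{prop:gpathends}. You add two things the paper leaves implicit: an explicit check of the ideal correspondence, which relies on the even-elements-maximal convention used in the paper's example and encoding, and a note that $g$ must be read as a path count (or rewritten via $g_{n,-k}=g_{n,k}$) when $k-1$ or $k-2$ is negative.
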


\begin{proof}
    Order ideals of order $n+k$ for the fence of order $2n+1$ correspond exactly to order ideals of order $(n+1)+(k-1)$ for the fence of order $2(n+1)$ which do not contain $2n+2$. These correspond to NES type 3 paths from $(0,0)$ to $(n+1,k-1)$ which do not end in $N$. Such paths are counted in general by $g_{n+1,k-1}$, and if they end in $N$, they correspond to paths counted by $g_{n,k-2}$ via Proposition~\ref{prop:gpathends}. As such, such paths not ending in $N$ are counted by $g_{n+1,k-1}-g_{n,k-2}$.
\end{proof}

\begin{remark}
    We now have explicit formulas for A051286, A051292, and A051291 in~\cite{oeis} based on entries of RNA Array III.
\end{remark}

\subsection{Asymptotics of Patterns}
Given the formulas to compute patterns in terms of $s_{n,k}$ and $g_{n,k}$ which we have derived, it is essentially sufficient to know certain asymptotics of $s_{n,k}$ and $g_{n,k}$ in order to compute asymptotics for any given pattern. We compute the formulas relevant to the asymptotics of pattern statistics. It should be noted that a number of related asymptotics are given in the context of RNA secondary structure enumeration by~\cite{combinatoricssecondarystructures}. In this section, we will use arbitrary $j_1,j_2,j_3\ge 1$. 

\begin{proposition}\label{prop:asymptoticsform}
    Given $j_1,j_2,j_3\ge 1$,
    \[s(z) = \frac{p_1(z)}{z^2} - \frac{\sqrt{p_2(z)}}{z^2}(1-\frac{z}{\alpha})^{1/2}\]

    for polynomials $p_1(z), p_2(z)$, $\alpha=\frac{(j_2+2\sqrt{j_1 j_3}) - \sqrt{(j_2+2\sqrt{j_1 j_3})^2 - 4j_1 j_3}}{2j_1 j_3}$ and $\beta=\frac{p_1(\alpha)}{\alpha^2}=\frac{(j_1 j_3)^{-1/2}}{\alpha}$. Furthermore, $s(z)$ satisfies the requirements for Theorem 4.1 in~\cite{combinatoricssecondarystructures}.
\end{proposition}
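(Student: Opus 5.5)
The plan is to solve explicitly the quadratic relation of Proposition~\ref{prop:generatingexpression} and factor its discriminant so that the dominant square-root singularity becomes visible. Write $a=j_1j_3$. Proposition~\ref{prop:generatingexpression} reads
\[a z^2 s^2(z) - (a z^2 - j_2 z + 1)s(z) + 1 = 0 .\]
Choosing the root that is a power series with $s(0)=1$ (the minus sign), we get
\[s(z)=\frac{(az^2-j_2z+1)-\sqrt{D(z)}}{2az^2}, \qquad D(z)=(az^2-j_2z+1)^2-4az^2 .\]
So I would set $p_1(z)=\frac{az^2-j_2z+1}{2a}$. The remaining work is to rewrite $\sqrt{D(z)}/(2a)$ as $\sqrt{p_2(z)}\,(1-z/\alpha)^{1/2}$ with $p_2$ a polynomial.

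First I factor $D$ as a difference of squares:
\[D(z)=\bigl(az^2-(j_2+2\sqrt a)z+1\bigr)\bigl(az^2-(j_2-2\sqrt a)z+1\bigr).\]
The first factor has roots whose product is $1/a$. Its smaller root is exactly the $\alpha$ in the statement, and the other root is $1/(a\alpha)$. Comparing constant terms gives
\[az^2-(j_2+2\sqrt a)z+1=(1-z/\alpha)(1-a\alpha z).\]
Hence I can take
\[p_2(z)=\frac{(1-a\alpha z)\bigl(az^2-(j_2-2\sqrt a)z+1\bigr)}{4a^2},\]
which gives the claimed form. For $\beta$ I evaluate at $z=\alpha$. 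Since $\alpha$ is a root of the first factor, $a\alpha^2-j_2\alpha+1=2\sqrt a\,\alpha$, so $p_1(\alpha)=\alpha/\sqrt a$. Therefore $\beta=p_1(\alpha)/\alpha^2=(j_1j_3)^{-1/2}/\alpha$, as stated.

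The main work, and the step I expect to be the obstacle, is checking the hypotheses of Theorem 4.1 in~\cite{combinatoricssecondarystructures}. As I understand them, these are: $\alpha$ is the unique singularity of smallest modulus; $p_1$ and $p_2$ are analytic on a disc of radius larger than $\alpha$; $p_2(\alpha)\neq 0$; and the expansion has the $(1-z/\alpha)^{1/2}$ form. Since $p_1$ and $p_2$ are polynomials, analyticity is automatic. The apparent pole at $z=0$ is removable because $s$ is a power series.

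So I need every zero of $p_2$ to have modulus strictly greater than $\alpha$.

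\begin{itemize}
\item The zero $1/(a\alpha)$ works because $\alpha<1/\sqrt a<1/(a\alpha)$. Here $\alpha<1/\sqrt a$ holds because $\alpha$ is the smaller root of a quadratic whose roots have product $1/a$ and are distinct: the discriminant $(j_2+2\sqrt a)^2-4a$ is positive.
\item For the second quadratic, the roots again have product $1/a$. If they are complex conjugates, both have modulus $1/\sqrt a>\alpha$.
\item If they are real, I would argue by monotonicity. The smaller root of $az^2-cz+1$, namely $\frac{c-\sqrt{c^2-4a}}{2a}$, is decreasing in $c$ for $c\ge 2\sqrt a$. Also $|j_2-2\sqrt a|<j_2+2\sqrt a$. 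A root of negative $c$ is the negative of a root for $|c|$. Together these give that both roots have modulus exceeding $\alpha$.
\end{itemize}

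This same computation shows $p_2(\alpha)\neq0$. Finally, positivity of the coefficients of $s$ (they count paths) is consistent with $\alpha$ being the dominant real singularity by Pringsheim's theorem. This completes the verification.
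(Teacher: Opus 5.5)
Your proposal is correct and follows the same route as the paper: solve the quadratic from Proposition~\ref{prop:generatingexpression} taking the minus sign, factor $(1-z/\alpha)$ out of the discriminant, and evaluate $p_1$ at $\alpha$ using $j_1j_3\alpha^2-j_2\alpha+1=2\sqrt{j_1j_3}\,\alpha$. Your difference-of-squares factorization of $D$ and the root-modulus comparison prove what the paper only asserts as ``easy to show''. That is, every other zero of $D$, and hence of $p_2$, has modulus strictly larger than $\alpha$, which is what actually gives $p_2(\alpha)\neq 0$ and the uniqueness of the dominant singularity. Positivity of the coefficients, as the paper invokes it, only shows via Pringsheim's theorem that $\alpha$ is a singularity, not that it is the only one on the circle.
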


\begin{proof}
    Using the identity in Proposition~\ref{prop:generatingexpression}, we know that
   \[s(z) = \frac{1-j_2 z + j_1 j_3 z^2 \pm \sqrt{(1-j_2 z+j_1j_3 z^2)^2 - 4j_1 j_3 z^2}}{2j_1 j_3 z^2}.\]

    Furthermore, we know that $s(z)$ is defined as a formal power series in $z$ without any negative powers of $z$. As such, $1-j_2 z + j_1 j_3 z^2 \pm \sqrt{(1-j_2 z+j_1j_3 z^2)^2 - 4j_1 j_3 z^2}$ must be of order 2. We know that $[z^0]\sqrt{(1-j_2 z+j_1j_3 z^2)^2 - 4j_1 j_3 z^2}=1$, so it follows that
    \[s(z) = \frac{1-j_2 z + j_1 j_3 z^2 - \sqrt{(1-j_2 z+j_1j_3 z^2)^2 - 4j_1 j_3 z^2}}{2j_1 j_3 z^2}.\] 

    Now, since $s(z)$ is the quotient of two analytic functions with zeroes of multiplicity 2 at $z=0$, it follows that $s(z)$ is analytic at $z=0$. In addition, $(1-j_2 z+j_1j_3 z^2)^2 - 4j_1 j_3 z^2 = 1 \ge 0$ at $z=0$. Furthermore, it is easy to show that the solution of $(1-j_2 z+j_1j_3 z^2)^2 - 4j_1 j_3 z^2=0$ with the smallest magnitude is $\alpha$. From this, it follows that on $|z| < \alpha$, $s(z)$ defined in this way is a composition of analytic functions, since $\sqrt{(1-j_2 z+j_1j_3 z^2)^2 - 4j_1 j_3 z^2}$ is analytic whenever $(1-j_2 z+j_1j_3 z^2)^2 - 4j_1 j_3 z^2 > 0$. Additionally, all terms of the formal power series for $s(z)$ are strictly positive, which is easy to see from the combinatorial interpretation of $s(z)$. As such, the positive real solution $z=\alpha$ is the only singularity on the circle of convergence for $s(z)$.
    
    Next, since $\alpha$ is a root for $(1-j_2 z+j_1j_3 z^2)^2 - 4j_1 j_3 z^2$, we may factor out a term of $(1-\frac{z}{\alpha})$ in order to obtain a new polynomial. This means that
    \[s(z) = \frac{p_1(z)}{z^2} - \frac{\sqrt{p_2(z)}}{z^2} {(1-\frac{z}{\alpha})}^{1/2}\]

    for polynomials $p_1,p_2$ as desired. This form and the convergence properties we have already obtained are sufficient to show that $s(z)$ satisfies the requirements given in~\cite{combinatoricssecondarystructures}. 
    
    Furthermore, $\frac{p_1(\alpha)}{\alpha^2}=\frac{1-j_2 \alpha + j_1 j_3 \alpha^2}{2j_1j_3 * \alpha^2}=\frac{2\sqrt{j_1 j_3} \alpha}{2j_1j_3 \alpha^2}=\frac{{(j_1 j_3)}^{-1/2}}{\alpha}$ as desired, using our definition of $\alpha$.
\end{proof}

\begin{remark}
    Since $s_{n,0} > 0$ for all $n\ge 0$, $s(z)$ has only positive coefficients. Due to this, along with the form for $s(z)$ given by Proposition~\ref{prop:asymptoticsform}, $s(z)$ satisfies all of the requirements for $y(z)$ given by Theorem 4.1, Corollary 4.2, and Corollary 4.3 in~\cite{combinatoricssecondarystructures}. 
\end{remark}

\begin{lemma}\label{lemma:gasymptotic}
    Consider $h(z)=\frac{1}{\alpha^2 \beta^2 - z^2 y^2} \Phi(z, y(z))$ where $y(z)$ fulfills the requirements of Corollary 4.3 in~\cite{combinatoricssecondarystructures} and $\Phi(z,y)$ is polynomial in $y$ and analytic in $z$. Then, $\lim_{n\to\infty} \frac{h_n}{n*y_n} = \frac{\Phi(\alpha,\beta)}{\alpha^2 \beta g^2(\alpha)}$.
\end{lemma}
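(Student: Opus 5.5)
The plan is to use singularity analysis. We compare the local expansion of $h(z)$ at its dominant singularity $z=\alpha$ with that of $y(z)$, and read off both coefficient asymptotics from the Flajolet--Odlyzko transfer theorems. By the hypotheses of Corollary 4.3 in~\cite{combinatoricssecondarystructures}, $y(z)$ has the form
\[y(z)=f(z)-g(z)\Bigl(1-\frac{z}{\alpha}\Bigr)^{1/2}\]
with $f,g$ analytic in a neighbourhood of $|z|\le\alpha$, $y(\alpha)=f(\alpha)=\beta$, $g(\alpha)\neq 0$, and $z=\alpha$ the unique singularity on the circle of convergence. Proposition~\ref{prop:asymptoticsform} gives exactly this for $s(z)$, with $f=p_1/z^2$ and $g=\sqrt{p_2}/z^2$. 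We will interpret the $g(\alpha)$ in the statement as this coefficient function evaluated at $\alpha$. The transfer theorem then gives
\[y_n\sim \frac{g(\alpha)}{2\sqrt{\pi}}\,\alpha^{-n}n^{-3/2}.\]

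The first key step is to factor the denominator:
\[\alpha^2\beta^2-z^2y^2=(\alpha\beta-zy)(\alpha\beta+zy).\]
Expanding $zy(z)$ at $z=\alpha$ gives
\[zy(z)=\alpha\beta-\alpha g(\alpha)\Bigl(1-\frac{z}{\alpha}\Bigr)^{1/2}+O\Bigl(1-\frac{z}{\alpha}\Bigr).\]
Hence $\alpha\beta-zy\sim\alpha g(\alpha)(1-z/\alpha)^{1/2}$, while $\alpha\beta+zy\to 2\alpha\beta$. Since $\Phi$ is polynomial in $y$ and analytic in $z$, we have $\Phi(z,y(z))=\Phi(\alpha,\beta)+O((1-z/\alpha)^{1/2})$. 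Therefore
\[h(z)=\frac{\Phi(\alpha,\beta)}{2\alpha^2\beta\, g(\alpha)}\Bigl(1-\frac{z}{\alpha}\Bigr)^{-1/2}+O(1).\]
The transfer theorem, using $\Gamma(1/2)=\sqrt{\pi}$, then gives
\[h_n\sim \frac{\Phi(\alpha,\beta)}{2\alpha^2\beta g(\alpha)\sqrt{\pi}}\,\alpha^{-n}n^{-1/2}.\]
Dividing by $n y_n$ cancels $\alpha^{-n}n^{-1/2}$ and the factor $2\sqrt\pi$, and yields $\Phi(\alpha,\beta)/(\alpha^2\beta g^2(\alpha))$, as claimed.

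The main obstacle is justifying that the transfer theorem applies to $h$. This requires $h$ to be analytic in a $\Delta$-domain at $\alpha$: in particular, $\alpha^2\beta^2-z^2y^2(z)$ must have no zeros in $|z|\le\alpha$ other than $z=\alpha$. The Corollary 4.3 hypotheses should supply $\Delta$-analyticity of $y$ itself. To exclude other zeros, I would argue via positivity. Since $zy(z)$ has nonnegative coefficients and is aperiodic (which holds for $s(z)$ because $s_{n,0}>0$ for all $n$), the strict triangle inequality gives $|zy(z)|<\alpha\beta$ for $|z|\le\alpha$, $z\neq\alpha$. This rules out both $zy=\alpha\beta$ and $zy=-\alpha\beta$ on the closed disk away from $\alpha$, and continuity then extends this to a small $\Delta$-neighbourhood. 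I would also check that $\alpha\beta+zy$ does not vanish at $\alpha$, which is immediate since its value there is $2\alpha\beta>0$.
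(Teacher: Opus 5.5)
Your proposal is correct and follows essentially the same route as the paper: you isolate the $(1-z/\alpha)^{-1/2}$ singular term of $h$ at $z=\alpha$ (coefficient $\pm\Phi(\alpha,\beta)/(2\alpha^2\beta g(\alpha))$, with the sign set by the convention for $g$), use nonnegativity of the coefficients to get $|z^2y^2(z)|<\alpha^2\beta^2$ on $|z|\le\alpha$, $z\neq\alpha$, and compare with the asymptotics of $y_n$ via the transfer theorem. The differences are only technical. The paper rationalizes the denominator to write $1/(\alpha^2\beta^2-z^2y^2)$ exactly as $\tilde\beta(z)+(1-z/\alpha)^{-1/2}\tilde g(z)$ with $\tilde\beta,\tilde g$ analytic near $\alpha$, so that the Darboux-type Theorem 4.1 of~\cite{combinatoricssecondarystructures} applies directly. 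You instead factor $(\alpha\beta-zy)(\alpha\beta+zy)$ and use an $O$-transfer on a $\Delta$-domain. Your sign convention for $g$ is opposite to the paper's, which is harmless because only $g^2(\alpha)$ survives in the limit.
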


\begin{proof}
    We know that 
    \[y(z) = \beta(z) + {(1-\frac{z}{\alpha})}^{1/2} g(z).\]

    Then, 
    \[y^2(z) = \beta^2(z) + (1-\frac{z}{\alpha})g^2(z) + 2(1-\frac{z}{\alpha})^{1/2} \beta(z)g(z).\]

    Next,
    \[\alpha^2 \beta^2 - z^2 y^2(z) = (1-\frac{z}{\alpha}) \phi(z) - 2(1-\frac{z}{\alpha})^{1/2} z^2 \beta(z)g(z)\]

    where $\phi$ is analytic near $\alpha$. Then, it follows that
    \[\frac{1}{\alpha^2 \beta^2 - z^2 y^2} = \frac{\phi(z)}{(1-\frac{z}{\alpha}) \phi^2(z) - 4 z^4 \beta^2(z)g^2(z)} + \frac{2z^2 \beta(z)g(z)}{(1-\frac{z}{\alpha}) \phi^2(z) - 4 z^4 \beta^2(z)g^2(z)}(1-\frac{z}{\alpha})^{-1/2}.\]

    We note that $\alpha^2 \beta^2 - z^2 y^2$ has a zero at $z=\alpha$, so $\frac{1}{\alpha^2 \beta^2 - z^2 y^2}$ has a singularity at $z=\alpha$. Then, by the requirements of Corollary 4.3 in~\cite{combinatoricssecondarystructures}, we know that all $y_n$ are nonnegative, and after some sufficiently large $N$, all $y_n$ are positive. It follows that the same holds for the coefficients of $y^2$. So, for all $|z| \le \alpha$, 
    \[|z^2 y^2(z)|\le |z|^2 y^2(|z|) \le \alpha^2 \beta^2\]
    
    with equality only at $z=\alpha$. As such, $\frac{1}{\alpha^2 \beta^2 - z^2 y^2}$ is analytic for $|z| < \alpha$ and has a singularity for some $z$ where $|z|=\alpha$ only at $z=\alpha$. This fulfills the requirements of Theorem 4.1 in~\cite{combinatoricssecondarystructures}, and it is easy to see that these requirements are still fulfilled if we take $h(z)=\frac{1}{\alpha^2 \beta^2 - z^2 y^2} \Phi(z,y(z))$. Then, Theorem 4.1 in~\cite{combinatoricssecondarystructures} for $h(z)$ gives us that 
    \[h_n \sim \frac{\Phi(\alpha, \beta)}{-2\alpha^2 \beta g(\alpha)} * \frac{1}{\Gamma(\frac{1}{2})} n^{-1/2} {(\frac{1}{\alpha})}^n = \frac{\Phi(\alpha,\beta)}{\alpha^2 \beta g(\alpha) \Gamma(-\frac{1}{2})} *n^{-1/2} {(\frac{1}{\alpha})}^n.\] 
    
    Meanwhile, we find that 
    \[y_n \sim \frac{g(\alpha)}{\Gamma(-\frac{1}{2})} n^{-3/2} {(\frac{1}{\alpha})}^{n}\]

    And so $\lim_{n\to\infty} \frac{h_n}{n*y_n} = \frac{\Phi(\alpha,\beta)}{\alpha^2 \beta g^2(\alpha)}$ as desired.
\end{proof}

\begin{remark}
    This uses the same argument structure as Corollary 4.3 in~\cite{combinatoricssecondarystructures}. We might describe $h(z)=\frac{1}{\alpha^m \beta^m - z^m y^m} \Phi(z,y)$. Then, Corollary 4.3 in~\cite{combinatoricssecondarystructures} proves a result for $m=1$ and Lemma~\ref{lemma:gasymptotic} proves a result for $m=2$. This could likely be generalized to all $m$, though doing so is beyond the scope of this paper. 
\end{remark}

\begin{theorem}\label{theorem:asymptotics}
    For any $j_1,j_2,j_3\ge 1$, $s_{n,k}$ and $g_{n,k}$ obey the following asymptotics:
    \begin{enumerate}
        \item[(i)] For fixed $k$, $\lim_{n\to\infty} s_{n,k}/s_{n,0}=(k+1){(\sqrt{\frac{j_1}{j_3}})}^k$.
        \item[(ii)] For fixed $k$, $\lim_{n\to\infty} g_{n,k}/g_{n,0}={(\sqrt{\frac{j_1}{j_3}})}^k$.
        \item[(iii)] For fixed $k$, $\lim_{n\to\infty} s_{n,k}/s_{n-1,k} = \lim_{n\to\infty} g_{n,k}/g_{n-1,k}=\frac{1}{\alpha}$ where $\alpha$ is given by Proposition~\ref{prop:asymptoticsform}.
        \item[(iv)] $\lim_{n\to\infty} \frac{n*s_{n,0}}{g_{n,0}}=2(1-\alpha\sqrt{j_1 j_3})+\frac{j_2}{\sqrt{j_1 j_3}}$.
    \end{enumerate}
\end{theorem}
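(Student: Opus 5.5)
We work throughout with the Riordan descriptions $S=(s(z),j_1 zs(z))$ and $G=(g(z),j_1zs(z))$ with $g(z)=\frac{s(z)}{1-j_1j_3z^2s^2(z)}$, so that
\[
\sum_n s_{n,k}z^n=j_1^k z^k s^{k+1}(z),\qquad \sum_n g_{n,k}z^n=\frac{j_1^kz^ks^{k+1}(z)}{1-j_1j_3z^2s^2(z)}.
\]
From Proposition~\ref{prop:asymptoticsform}, $s(z)=\beta(z)+(1-\frac{z}{\alpha})^{1/2}\tilde g(z)$ near $z=\alpha$, with $\beta(\alpha)=\beta=\frac{(j_1j_3)^{-1/2}}{\alpha}$. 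The key observation is that $\alpha^2\beta^2=\frac{1}{j_1j_3}$. Hence $1-j_1j_3z^2s^2(z)=j_1j_3(\alpha^2\beta^2-z^2s^2(z))$, which is exactly the denominator shape treated in Lemma~\ref{lemma:gasymptotic}, with $y=s$.

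For (i), apply the transfer theorem (Theorem 4.1 / Corollary 4.2 in~\cite{combinatoricssecondarystructures}) to $\Phi(z,s)=j_1^kz^ks^{k+1}$. Since the square-root term has coefficient $(k+1)j_1^k\alpha^k\beta^k\tilde g(\alpha)$, we get
\[
s_{n,k}/s_{n,0}\to (k+1)(j_1\alpha\beta)^k=(k+1)\bigl(j_1(j_1j_3)^{-1/2}\bigr)^k=(k+1)\bigl(\sqrt{j_1/j_3}\bigr)^k.
\]

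For (ii), write $g_{n,k}$ via Lemma~\ref{lemma:gasymptotic} with $\Phi=\frac{1}{j_1j_3}j_1^kz^ks^{k+1}$. This gives $g_{n,k}\sim C\,\Phi(\alpha,\beta)\,n\,s_{n,0}$ with a constant $C$ independent of $k$. Taking the ratio of the cases $k$ and $0$ then leaves $(j_1\alpha\beta)^k=(\sqrt{j_1/j_3})^k$.

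For (iii), both $s_{n,k}$ and $g_{n,k}$ have asymptotics of the form $c\,n^{\gamma}\alpha^{-n}$, with $\gamma=-3/2$ for $s_{n,k}$ and $\gamma=-1/2$ for $g_{n,k}$, by the results above. Consecutive ratios therefore tend to $1/\alpha$.

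For (iv), I will not recompute directly. Instead I use Proposition~\ref{prop:stepidentity}:
\[
n s_{n,0}=2\Bigl(\tfrac{j_3}{j_1}g_{n,2}-j_3g_{n-1,1}\Bigr)+\tfrac{j_2}{j_1}g_{n,1}.
\]
Divide by $g_{n,0}$ and use (ii) and (iii): $g_{n,2}/g_{n,0}\to j_1/j_3$, $g_{n,1}/g_{n,0}\to\sqrt{j_1/j_3}$, and $g_{n-1,1}/g_{n,0}\to\alpha\sqrt{j_1/j_3}$. The limit is
\[
2\bigl(1-\alpha j_3\sqrt{j_1/j_3}\bigr)+\frac{j_2}{j_1}\sqrt{j_1/j_3}=2\bigl(1-\alpha\sqrt{j_1j_3}\bigr)+\frac{j_2}{\sqrt{j_1j_3}},
\]
as claimed.

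The main obstacle is justifying the transfer step. This requires verifying that $s(z)$ satisfies the hypotheses of Corollary 4.3 in~\cite{combinatoricssecondarystructures}, including eventual positivity of coefficients, so that Lemma~\ref{lemma:gasymptotic} applies. It also requires checking that the constant in Lemma~\ref{lemma:gasymptotic} factors as $\Phi(\alpha,\beta)$ times a $k$-independent quantity, and that this quantity is nonzero (i.e.\ $\tilde g(\alpha)\neq 0$). I would handle these by the Remark after Proposition~\ref{prop:asymptoticsform} and the explicit form of $\tilde g(\alpha)=-\sqrt{p_2(\alpha)}/\alpha^2\ne0$, since $\alpha$ is a simple root of the discriminant.
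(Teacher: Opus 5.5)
Your proposal is correct and follows essentially the same route as the paper: Corollary 4.2 of~\cite{combinatoricssecondarystructures} gives (i), Lemma~\ref{lemma:gasymptotic} (after rewriting $1-j_1j_3z^2s^2=j_1j_3(\alpha^2\beta^2-z^2s^2)$) gives (ii), and Proposition~\ref{prop:stepidentity} combined with (ii) and (iii) gives (iv). The only cosmetic difference is in (iii), where you read the ratio off the $c\,n^{\gamma}\alpha^{-n}$ form, whereas the paper applies the same transfer results to the shifted generating function $z^{k+1}s^{k+1}$ and divides.
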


\begin{proof}
    We know that $s_{n,k}$ has a generating function of $j_1^k z^k s^{k+1}(z)$. By Corollary 4.2 in~\cite{combinatoricssecondarystructures}, 
   \[\lim_{n\to\infty} \frac{s_{n,k}}{s_{n,0}} = j_1^k * (k+1)\alpha^k \beta^k = (k+1) j_1^k {(j_1 j_3)}^{-k/2}=(k+1){(\sqrt{\frac{j_1}{j_3}})}^k\]

    which yields the desired result for (i). Next, $g_{n,k}$ is generated by $\frac{j_1^k}{1-j_1 j_3 * z^2 s^2(z)} z^k s^{k+1}(z)$. We note that $j_1 j_3 = \frac{1}{\alpha^2 \beta^2}$, so $g_{n,k}=j_1^k \alpha^2 \beta^2 * \frac{1}{\alpha^2 \beta^2 - z^2 s^2(z)} * z^k s^{k+1}(z)$. We may now apply Lemma~\ref{lemma:gasymptotic} twice to conclude that $\lim_{n\to\infty} g_{n,k}/g_{n,0}=\frac{j_1^k \alpha^k \beta^{k+1}}{\beta}={(\sqrt{\frac{j_1}{j_3}})}^{k}$ since $\alpha\beta=\frac{1}{\sqrt{j_1 j_3}}$. This yields (ii).

    Now, for fixed $k$, $s_{n,k}$ is generated by $j_1^k z^k s^{k+1}(z)$, while $s_{n-1,k}$ is generated by $j_1^k z^{k+1} s^{k+1}(z)$. Applying Corollary 4.2 from~\cite{combinatoricssecondarystructures} twice, we find that
    \[\lim_{n\to\infty} \frac{s_{n,k}}{s_{n-1,k}}=\lim_{n\to\infty} \frac{s_{n,k}}{s_{n,0}}\frac{s_{n,0}}{s_{n-1,k}} = \frac{j_1^k * (k+1)\alpha^{k} \beta^k}{j_1^k * (k+1)\alpha^{k+1} \beta^k}=\frac{1}{\alpha}\]

    as desired. Likewise, we apply Lemma~\ref{lemma:gasymptotic} twice to conclude that
    \[\lim_{n\to\infty} \frac{g_{n,k}}{g_{n-1,k}}=\frac{\alpha^k \beta^{k+1}}{\alpha^{k+1}\beta^{k+1}} = \frac{1}{\alpha}.\]

    In this way, we have obtained (iii). Finally, by Proposition~\ref{prop:stepidentity},
    \[n*s_{n,0} = 2(\frac{j_3}{j_1} g_{n,2}-j_3*g_{n-1,1}) + \frac{j_2}{j_1}g_{n,1}.\]

    Using this identity,
    \[\lim_{n\to\infty} \frac{n*s_{n,0}}{g_{n,0}}=2(\frac{j_3}{j_1}\lim_{n\to\infty} \frac{g_{n,2}}{g_{n,0}}-j_3\lim_{n\to\infty} \frac{g_{n-1,1}}{g_{n,0}})+\frac{j_2}{j_1}\lim_{n\to\infty} \frac{g_{n,1}}{g_{n,0}}.\]

    Then, using the asymptotics we have already computed, we find that
    \[\lim_{n\to\infty} \frac{n*s_{n,0}}{g_{n,0}}=2(1-\alpha\sqrt{j_1 j_3})+\frac{j_2}{\sqrt{j_1 j_3}}\]
    
    as desired.
\end{proof}

\begin{example}
Using the above results for $j_1=j_2=j_3=1$, we obtain $\alpha=\frac{3-\sqrt{5}}{2}$ and $\beta=\frac{1}{\alpha}$. We then may compute asymptotics for various formulas. For instance, taking the formula $s_{n,2}-s_{n-1,1}$ which counts $N$ steps at $y=0$, we find that $\lim_{n\to\infty} \frac{s_{n,2}-s_{n-1,1}}{s_{n,0}}=3-2\alpha=\sqrt{5}$. Likewise, considering the formula $s_{n,1}$ for $E$ steps at $y=0$, we find that $\lim_{n\to\infty} \frac{s_{n,1}}{s_{n,0}}=2$. These results align with asymptotics given for A089737 and A089735 in~\cite{oeis}.
\end{example}

\begin{example}
    If we take $j_1=j_3=1$ and $j_2=2$, we obtain $\alpha=2-\sqrt{3}$ and $\beta=\frac{1}{\alpha}$. We then find that $\lim_{n\to\infty} \frac{n*s_{n,0}}{g_{n,0}}=2(1-\alpha)+2=2\sqrt{3}$. We note that both of these sequences, $s_{n,0}$ and $g_{n,0}$, appear in~\cite{oeis} as A187256 and A101500, so this gives an asymptotic relationship between two more known sequences.
\end{example}

\section{Further Work}\label{sec:conclusion}
\begin{openproblem}
We know that $S_{1,1,j_3}^{j_2}=S_{1,j_2,j_3}$ because $M\mapsto M^j$ has the effect of taking a $\Delta$-sequence generated by $B(z)$ and yielding a $\Delta$-sequence generated by $jB(z)$. A problem of interest is to find an operation on Riordan matrices which, given a $\Delta$-sequence generated by $B(z)$, yields one generated by $B(jz)$. This would allow us to find a relationship between $S_{1,1,1}$ and $S_{1,1,j_3}$.
\end{openproblem}

\begin{openproblem}
We have provided a method to obtain pattern statistics for Motzkin paths avoiding $NS$. These methods might be extended to Motzkin paths avoiding other patterns. For instance, pattern statistics for valleyless Motzkin paths might also be found, though upon preliminary inspection, that problem seems more difficult using this paper's methods.
\end{openproblem}

\begin{openproblem}
    We have provided a method to induce a distributive lattice on prefixes of grand peakless Motzkin paths. This method might be further generalized to other classes of lattice walks.
\end{openproblem}

\section*{Acknowledgements}
This work was completed when the author was visiting Yale University for the SUMRY REU. The author would also like to thank Swarthmore College for funding this work through the Allen and Naomi Schneider Summer Research Fund. Finally, the author would like to thank Professor Asamoah Nkwanta for discussions surrounding this project and helpful feedback on the paper.  

\bibliographystyle{IEEEtran}
\bibliography{walks_patterns_riordan}

\end{document}